 \newtheorem{thm}{Theorem}[section]
 \newtheorem{prop}[thm]{Proposition}
 \newtheorem{cor}[thm]{Corollary}
 \newtheorem{lem}[thm]{Lemma}
\theoremstyle{definition}
\theoremstyle{remark}
\newtheorem{rem}[thm]{Remark}
\numberwithin{equation}{section}
\newcommand{\To}{\longrightarrow}
\renewcommand{\L}{\ifmmode {\mathcal{L}}\else$\mathcal{L}$\ \fi}
\newcommand{\id}{\mathrm{id}}
\newcommand{\bbC}{\ifmmode {\mathbb{C}}\else$\mathbb{C}$\ \fi}
\newcommand{\bbR}{\ifmmode {\mathbb{R}}\else$\mathbb{R}$\ \fi}
\newcommand{\be}{\begin{equation}}
\newcommand{\ee}{\end{equation}}
\newcommand{\fpbar}{\ifmmode {\overline{\mathbb{F}_p}}\else$\mathbb{F}_p$\ \fi}
\newcommand{\fp}{\ifmmode {\mathbb{F}_p}\else$\mathbb{F}_p$\ \fi}
\newcommand{\zp}{\ifmmode {\mathbb{Z}_p}\else$\mathbb{Z}_p$\ \fi}
\newcommand{\Zp}{\ifmmode {\mathbb{Z}_p}\else$\mathbb{Z}_p$\ \fi}
 \newcommand{\z}{\mathbb{Z}}
\newcommand{\zpMod}{\ifmmode\mbox{$\zp$-Mod}\else$\zp$-Mod \fi}
\newcommand{\Mod}{\ifmmode\mbox{$\Lambda$-Mod}\else$\Lambda$-Mod \fi}
\renewcommand{\mod}{\ifmmode\mbox{$\Lambda$-mod}\else$\Lambda$-mod
\fi}
\newcommand{\La}{\ifmmode\Lambda\else$\Lambda$\fi}
\newcommand{\End}{{\mathrm{End}}}
\newcommand{\Jac}{{\mathrm{Jac}}}
\newcommand{\M}{\ifmmode {\mathfrak M}\else${\mathfrak M}$ \fi}
\newcommand{\m}{\ifmmode {\mathfrak m}\else$\mathfrak m$ \fi}
\newcommand{\mh}{\ifmmode {\mathfrak m}(H)\else${\mathfrak m}(H)$ \fi}
\newcommand{\p}{\ifmmode {\mathfrak p}\else${\mathfrak p}$\ \fi}
\renewcommand{\P}{\ifmmode {\mathfrak P}\else${\mathfrak P}$\ \fi}
\newcommand{\e}{\ifmmode {\mathcal{E}}\else$\mathcal{E}$ \fi}
\newcommand{\n}{\mbox{${\mathbb N}$}}
\newcommand{\C}{\mathcal{C}}
\newcommand{\N}{\mathcal{N}}
\renewcommand{\O}{\mathcal{ O}}
\newcommand{\G}{\ifmmode {\mathcal{G}}\else${\mathcal{G}}$\ \fi}
\newcommand{\A}{\ifmmode {\mathcal{A}}\else${\mathcal{ A}}$\ \fi}
\renewcommand{\projlim}[1] {{\lim\limits_{\stackrel{\displaystyle
\longleftarrow}{#1}}}}
\newcommand{\lk}{\hspace{-3 pt}\ll\hspace{-2 pt}}
\newcommand{\Qp}{\ifmmode {{\mathbb Q}_p}\else${\mathbb Q}_p$\ \fi}
\newcommand{\qp}{\ifmmode {{\mathbb Q}_p}\else${\mathbb Q}_p$\ \fi}
\newcommand{\ql}{\ifmmode {{\mathbb Q}_l}\else${\mathbb Q}_l$\ \fi}
\newcommand{\Q}{\ifmmode {\mathbb Q}\else${\mathbb Q}$\ \fi}
\newcommand{\q}{\ifmmode {\mathbb Q}\else${\mathbb Q}$\ \fi}
\newcommand{\gr}{\mathrm{gr}}
\begin{document}

\title[]{Localisations and completions of skew power series rings}%
\author{Peter Schneider and Otmar Venjakob}%
\address{Universit\"{a}t M\"{u}nster,  Mathematisches Institut,  Einsteinstr. 62,
48291 M\"{u}nster,  Germany,
 http://www.uni-muenster.de/math/u/schneider/ }%
\email{pschnei@math.uni-muenster.de }%

\address{Universit\"{a}t Heidelberg,  Mathematisches Institut,  Im Neuenheimer Feld 288,  69120
Heidelberg,  Germany,
 http://www.mathi.uni-heidelberg.de/$\,\tilde{}\,$venjakob/}
\email{venjakob@mathi.uni-heidelberg.de}

\thanks{ }%
\subjclass{16W60, 16W70, 11R23 }%
\keywords{Skew power series ring, (overconvergent) skew Laurent series ring, Iwasawa algebra, localisation }%

\date{\today}%
%\dedicatory{ }%
%\commby{ }%
% ----------------------------------------------------------------
\begin{abstract}
This paper is a natural continuation of the study of skew power series rings
$A=R[[t;\sigma,\delta]]$ initiated in \cite{sch-ven}. We construct skew Laurent series rings $B$
and show the existence of some canonical Ore sets $S$ for the skew power series rings $A$ such that
a certain completion of the localisation $A_S$  is isomorphic to $B.$ This is applied to certain
Iwasawa algebras. Finally we introduce subrings of overconvergent skew Laurent series rings.
\end{abstract}
\maketitle
% ----------------------------------------------------------------

\section*{Introduction}

In  \cite{sch-ven} we introduced the general notion of a skew power series ring
$A=R[[t;\sigma,\delta]]$ over a pseudocompact ring $R$ (see section 1 for details) and we studied
basic properties of it. This was motivated by and applied to the study of Iwasawa algebras of certain $p$-adic Lie groups. It is a very important step in non-commutative Iwasawa theory to pass from the Iwasawa algebra to its localisation in a specific rather big Ore subset (cf.\ \cite{cfksv}). The starting point of the present paper is the discovery that a natural completion of this localisation can be viewed as a skew Laurent series ring. This means in particular that this completed localisation can be constructed by inverting a single element $t$ with subsequent completion.\\

In fact we will develop our results in the context of a general skew power series ring $A=R[[t;\sigma,\delta]]$. In section 1 we want to formally invert the element $t,$ i.e.\ construct a skew
Laurent series ring
\[ B=R((t;\sigma,\delta)).\] It turns out that for a (left of right) Artinian ring $R$ the ring $B$
exists in form of the localisation $A_T$ of $A$ with respect to $T=\{1,t,t^2,\ldots \},$ see
subsection 1.1. But for general $R$ and non-trivial $\delta$ one sees easily that a ring extension
of $A$ in which  $t $ is invertible in general contains  series $\sum_{i\in\z}r_iT^i$ with {\em infinite} negative part. Due to this observation we  define in subsection 1.2 a, in both
directions, infinite skew Laurent series ring
\[B=R\hspace{-3 pt}\ll\hspace{-2 pt}t;\sigma,\delta]]\] consisting of formal infinite sums $\sum_{i\in\z}r_iT^i$ such that the
negative part satisfies some growth condition; in fact the commutative version, i.e.\ for
trivial $\sigma$ and $\delta,$ with for example $R=\zp$ is a well-known ring in $p$-adic analysis.
The multiplication law is given by explicit formulae \eqref{mult}. In order to see that they
actually define a (topological) ring structure we identify $B$ with the projective limit of skew
Laurent series rings with Artinian coefficients
\[   B\cong \projlim{k}\big(R/I_k\big)((t;\overline{\sigma},\overline{\delta})) \]
for a certain filtration $I_\bullet$ of $R$ which is needed to grant the existence of $B.$ In
Proposition \ref{skew-grad} we prove a criterion under which the ring $B$ is Noetherian and flat as
a left and right $A$-module.\\

Then we generalise and axiomatise the localisation technique from \cite{cfksv} so that it applies
in our context. Assuming that $\delta(R)$ is contained in the Jacobson radical $ \Jac(R)$ of $R$ we
consider the canonical projection
\[\pi: A=R[[t;\sigma,\delta]]\twoheadrightarrow (R/\Jac(R))[[t;\overline{\sigma}]].\]
Theorem \ref{main-Ore}, the main result of section 2, states that $S=\C_A(\ker(\pi))$  is
an Ore set of $A$ consisting of regular elements. In particular, the localisation $A_S$ of $A$ with
respect to $S$ exists. The proof is again reduced to the Artinian case which is dealt with in
subsection 2.2 after discussing in  subsection 2.1 a general method how to attach Ore sets to a
ring homomorphism $R\to A$ of arbitrary rings $R$ and $A,$ which might be of its own interest.

As alluded to above the remarkable fact is that the two ring extensions $B$ and $A_S$ of $A$ are in fact closely related: The
filtration $I_{\bullet}$ induces a filtration on $A_S$ such that   the completion of $A_S$ with
respect to it is isomorphic to $B,$ see Proposition \ref{BasCompletion}, i.e.\ after completion it
is in some sense sufficient to just invert $t\in S$ instead of the much bigger set $S.$\\

In section 3 we apply our results to Iwasawa algebras. In this context the existence of the
localisation  $A_S$, of course, is known from \cite{cfksv}, see also \cite{ard-brown}. The main result is the
existence of the skew Laurent series ring $B$ in this setting and the fact that it is a
pseudocompact Noetherian ring, flat over $A$ and $A_S,$ see Theorem \ref{Bthm}.\\

Finally in section 4 we discuss different convergence conditions for our skew Laurent series which leads
to the definition of {\em overconvergent} skew Laurent series rings generalising again a concept
from $p$-adic Hodge theory as studied by Cherbonnier and Colmez. We expect that our constructions
and results will have important applications in non-commutative Iwasawa theory, but also in the theory of $p$-adic representations (see the forthcoming work of the first author with M-.F.\ Vigneras).

\section{ Infinite skew Laurent series rings }

Let $R$ be a  Noetherian pseudocompact ring together with the
following data:

\begin{enumerate}
\item  a topological ring automorphism $\sigma$ of $R,$
\item a continuous left $\sigma$-derivation $\delta:R\to R$ which is
$\sigma$-nilpotent in the sense of \cite[{\S 1}]{sch-ven}.
\end{enumerate}

%\item a $\sigma$-stable separated exhaustive descending filtration $I_n,$ $n\geq 0,$ of
%$R,$ in particular satisfying $I_k\cdot I_l\subseteq I_{k+l},$
%consisting of closed (two-sided) ideals and such that
%$\delta(I_k)\subseteq I_{k+1}.$

%In fact, the $\sigma$-nilpotence of $\delta$ already follows from
%the third condition by \cite[rem.\ 1.3ii]{sch-ven}.

In particular there exists the pseudocompact skew power series ring
\[A:=R[[t;\sigma,\delta]],\] see \cite[{\S}1]{sch-ven}, where the ring structure
is given by the following relation \be\label{relations} t
r=\sigma(r)t +\delta(r), \;\; (\mbox{ or by } rt=t\sigma'(r)
+\delta'(r))\ee and continuity. Here we set $\sigma':=\sigma^{-1}$
and
 $\delta':=-\delta\circ\sigma^{-1},$ i.e.\ $\delta'$ is a right $
\sigma'$-derivation.

The aim of this section is to introduce also a skew Laurent series
ring version which contains a formal inverse $t^{-1}$ of the element
$t.$  For non-trivial $\delta$ this involves in general series
$\sum_{i\in \z} r_i t^i$ with infinitely many nonzero coefficients
$r_i$ for negative $i$, as can be seen easily from the formulae
\eqref{rel-tinverse} below.

But if $\delta$ and $\delta'$ are nilpotent those formulae are
actually finite and we will show in the following subsection that
then the skew Laurent series ring with {\em finite negative part}
exists. Afterwards we shall construct for rather general $\delta$ a
skew Laurent series ring with {\em infinite negative part} but which
satisfies a certain convergence condition.

\subsection{Skew Laurent series rings in the nilpotent case}

In this subsection we assume in addition that both, $\delta$ and
$\delta'$ are nilpotent, say of degree $M,$ i.e.\
\[\delta^n\equiv 0,\;\; \delta'^n\equiv 0 \mbox{ for all } n\geq
M.\] Note that for a (left or right) Artinian ring $R$ the
nilpotence of $\delta$ and $\delta'$ follows already from
$\sigma$-nilpotence of $\delta.$ \\ For any integers $k,l \geq 0$,
$M_{k,l}(Y,Z)$ denotes the sum of all noncommutative monomials in
two variables $Y,Z$ with $k$ factors $Y$ and $l$ factors $Z.$
Consider the multiplicatively closed subset
\[T:=\{1,t, t^2,t^3,\cdots\}\] of $A$ consisting of all powers of
$t.$

\begin{prop}\label{skewLaurent}
The set $T$ satisfies the (right and left) Ore condition, i.e.\ the
localisation $A_T$ of $A$ at $T$ exists.
\end{prop}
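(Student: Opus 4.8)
The plan is to verify the Ore condition directly by exhibiting explicit formulae for moving powers of $t$ past elements of $A$, using the nilpotence of $\delta$ and $\delta'$. Concretely, the right Ore condition for $T$ requires that for every $a\in A$ and every $t^n\in T$ there exist $a'\in A$ and $t^m\in T$ with $a t^m = t^n a'$; since $T$ is generated by $t$ and consists of powers of a single element, it suffices to treat $n=1$, i.e.\ to show that for each $a\in A$ there is some $m\geq 0$ and $a'\in A$ with $a t^m = t\,a'$. Equivalently — and this is the form I would actually prove — I would show that $t^{-1}$ can be adjoined by giving a well-defined formula for $t^{-1}r$ (resp.\ $r t^{-1}$) as a skew power series in $t$ with coefficients built from $\sigma^{\pm 1}$ and $\delta$, $\delta'$; the relations \eqref{rel-tinverse} referred to in the text are presumably exactly these. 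The key point is that, by the relation $tr=\sigma(r)t+\delta(r)$, one formally gets $t^{-1}r = \sigma^{-1}(r)t^{-1} - \sigma^{-1}(\delta(\sigma^{-1}(r)))t^{-2} + \cdots$, an a priori infinite series in negative powers of $t$; but the coefficient of $t^{-1-k}$ in $t^{-1}r$ involves the operator $\delta^{k}$ (suitably twisted by powers of $\sigma^{-1}$), which vanishes for $k\geq M$ by nilpotence of $\delta$. Hence the series is finite, lies in $R[[t;\sigma,\delta]]$ after multiplying by a sufficiently high power of $t$, and the Ore fraction $a' , t^m$ can be read off with $m = M-1$ (uniformly in $a$, after reducing to the case $a\in R$ and then extending $t$-adically by continuity).

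More precisely, I would proceed in the following steps. First, reduce to coefficients: since $A=R[[t;\sigma,\delta]]$ is topologically generated by $R$ and $t$, and $T$-localisation is compatible with the $t$-adic (pseudocompact) topology, it is enough to find, for each $r\in R$, an element $a_r'\in A$ with $r t^{M-1} = t\, a_r'$, and then assemble the general case by continuity and linearity — any $a=\sum_{i\geq 0} r_i t^i$ satisfies $a t^{M-1} = \sum_i r_i t^i t^{M-1} = \sum_i t\, a_{r_i}' t^i = t\big(\sum_i a_{r_i}' t^i\big)$, the inner sum converging $t$-adically. Second, establish the closed formula for $r t^n$ in terms of the $M_{k,l}$ notation already introduced in the excerpt: iterating $rt = t\sigma'(r) + \delta'(r)$ gives $r t^n = \sum_{k+l=n,\ l<M} t^k\, M_{k,l}(\text{twists of }\sigma',\ \delta')(r)$, a \emph{finite} sum precisely because $\delta'$ occurs $l$ times and $\delta'^{\,M}\equiv 0$. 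Choosing $n=M-1$ shows every term has $t$-degree $k = n-l \geq n-(M-1) = 0$ on the nose only when $l=0$... actually one wants $k\geq 1$, so take $n=M$: then $k = M-l \geq 1$ for all $l\leq M-1$, and one can factor out a single $t$ on the left, yielding $r t^M = t\, a_r'$ with $a_r'\in A$ explicit. Third, run the mirror-image argument with $tr = \sigma(r)t+\delta(r)$ and the nilpotence of $\delta$ to get the left Ore condition $t^M r = a_r'' t$ (equivalently the left-hand version $A_T$ of the localisation). Fourth, note $T$ consists of regular (non-zero-divisor) elements: $t$ is regular in $A$ since $A$ is a skew power series ring in $t$ over $R$ — the leading-coefficient argument shows $t\cdot(\sum r_i t^i)=0$ forces all $r_i=0$, and similarly on the other side — so the localisation map $A\to A_T$ is injective and $A_T$ is a genuine ring of fractions.

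The main obstacle, as I see it, is not any single step in isolation but bookkeeping the twists correctly: the coefficients appearing when one commutes $t^n$ past $r$ are specific noncommutative words in $\sigma'$ and $\delta'$ (and their $\sigma'$-conjugates), and one must check that the formal inverse formulae \eqref{rel-tinverse} are genuinely consistent — i.e.\ that defining $t^{-1}$ by these finite formulae respects the relation \eqref{relations} — rather than merely formally plausible. I would handle this by verifying the cocycle-type identity $M_{k,l}$ satisfies under composition (which is where the combinatorial identity $M_{k,l}(Y,Z) = Y M_{k-1,l}(Y,Z) + Z M_{k,l-1}(Y,Z)$ enters, matching the recursion $r t^{n} = (rt) t^{n-1}$), and by the standard trick of checking the Ore condition on the dense subring $R[t;\sigma,\delta]$ of polynomials first — where $A_T$ restricts to the ordinary skew Laurent polynomial ring $R[t,t^{-1};\sigma,\delta]$, whose existence in the $\delta$-nilpotent case is classical — and then passing to the completion. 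A secondary subtlety is confirming that the resulting $A_T$ is again pseudocompact (or at least that the $t$-adic filtration behaves well), but for the statement as given only the Ore condition is asserted, so this can be deferred.
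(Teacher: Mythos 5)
Your proposal follows essentially the same approach as the paper: reduce to moving a single coefficient $r\in R$ past a power of $t$, iterate the relation $rt=t\sigma'(r)+\delta'(r)$, truncate using the nilpotence of $\delta'$ (the paper's identities \eqref{IH}--\eqref{IR}, controlled by the $M_{k,l}$ recursion), and then extend to general $a=\sum_{i\geq0}r_it^i$ via continuity of multiplication in the pseudocompact ring $A$. The paper proves the Ore identity for $t^{-j}$ directly by induction on $j$ whereas you reduce to $j=-1$ and iterate; these are equivalent reorganizations of the same argument.

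One small slip is worth flagging. You write that ``the leading-coefficient argument shows $t\cdot(\sum r_it^i)=0$ forces all $r_i=0$, and similarly on the other side,'' but the two sides are not symmetric when $\delta\neq0$. The easy direction is right multiplication: $(\sum r_it^i)\,t=\sum r_it^{i+1}$, so $bt=0$ kills all coefficients by a degree count. For left multiplication, $t\cdot\sum r_it^i=\sum_i\bigl(\sigma(r_i)t^{i+1}+\delta(r_i)t^i\bigr)$, and the lowest-order coefficient of $tb$ is $\delta(r_{i_0})$, not $r_{i_0}$; $\delta(r_{i_0})=0$ does not force $r_{i_0}=0$. One does get $tb=0\Rightarrow b=0$, but it requires the nilpotence of $\delta$: the resulting relations $\sigma(r_{n-1})=-\delta(r_n)$ give $r_0=(-\sigma'\delta)^n(r_n)$ for every $n$, which vanishes for $n\geq M$. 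This is peripheral to the Ore condition itself (and the paper, too, cites regularity of $t$ as a known fact rather than proving it), but the parenthetical justification you gave would not withstand inspection as stated.
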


\begin{proof}
For the right Ore condition we have to show that for any
$a=\sum_{i\geq 0} r_i t^i\in A$ and any $j<0$ there exist an element
$a'=\sum_{i\geq 0} r_i' t^i\in A$ and a natural number $N(j)$ such
that \be\label{ore} a t^{N(j)}=t^{-j}a'.\ee We first consider the
case $a=rt^0\in R$ and note that using the first relation in
\eqref{relations} and a simple telescope sum argument one checks the
identity
 \be\label{IH} t\cdot \sum_{-M\leq m\leq -1}
\sigma'\delta'^{-m-1}(r) t^{m+M} =r t^M
 \ee
in $A.$ On the other hand for a fixed $j<0$ we may find a natural
number $N(j)\gg -j$ such that $M_{j-m,l}(\delta',\sigma')\equiv 0$
for all $0 \leq l\leq 1-j$ and $m<-N(j).$ Then using the relation
\begin{equation*}
 M_{k+1,l}(Y,Z) - YM_{k,l}(Y,Z) = ZM_{k+1,l-1}(Y,Z)
\end{equation*}
one easily checks the identity
 \be\label{IS} t\cdot
\sum_{m\leq j} \sigma' M_{j-m,1-j}(\delta',\sigma')(r)
t^{m+N(j)}=\sum_{m\leq j+1}\sigma'
M_{(j+1)-m,1-(j+1)}(\delta',\sigma')(r) t^{m+N(j)}
 \ee
in $A;$ note that the sums indexed by `$m\leq j$' or `$m\leq j+1$'
are actually finite. Using \eqref{IH} as starting point and
\eqref{IS} as induction step it follows that
 \be\label{IR}
 t^{-j}\cdot
\sum_{m\leq j}\sigma' M_{j-m,1-j}(\delta',\sigma')(r) t^{m+N(j)}
=rt^{N(j)}.
 \ee
For an arbitrary $a\in A$ we set
\[a':=\sum_{i\geq 0}\Big(\sum_{m\leq j}
\sigma'M_{j-m,1-j}(\delta',\sigma')(r_i)t^{m+N(j)}\Big) t^i\in A.\]
Since the multiplication  in the pseudocompact ring $A$ is
continuous the multiplication from the left by $t^{-j}$ commutes
with the (first)   summation and hence we obtain $\eqref{ore}.$ The
left Ore condition follows from an analogous argument and thus the
localisation exists by \cite[thm.\ 2.1.12]{mc-rob} and the fact that
all elements in $T$ are regular.
\end{proof}

Clearly all elements in the {\em skew Laurent series ring}
\[B := R((t;\sigma,\delta)):=A_T\] can be written as series
$\sum_{i\gg-\infty} r_it^i$ and  $\sum_{i\gg-\infty} t^ir_i$ with
{\em finite} negative part $\sum_{-\infty<i<0}r_it^i$ and
$\sum_{-\infty<i<0}t^ir_i,$ respectively.

\begin{rem}
The same argument shows that $T$ is also an Ore set in the skew
polynomial ring $R[t;\sigma,\delta]$ and thus also the {\em skew
Laurent polynomial} ring
\[R(t;\sigma,\delta):=R[t;\sigma,\delta]_T\] exists under the
hypothesis of this subsection.
\end{rem}

%\begin{rem}
%Using equations \eqref{IH},\eqref{IS} one verifies immediately that
%the multiplication in $R((t;\sigma,\delta))$ and
%$R(t;\sigma,\delta)$ is given by the formula \eqref{mult} below.
%\end{rem}

It follows from the identity \eqref{IH} that in the ring
$R((t;\sigma,\delta))$ the   relations \be\label{rel-tinverse}
t^{-1}r=\sum_{i\leq -1} \sigma'\delta'^{-i-1}(r) t^i,\ee and \be r
t^{-1}=\sum_{i\leq -1} t^i\sigma\delta^{-i-1}(r) \ee hold.    More
generally, for $j<0,$ the identity \eqref{IR} says that \be t^j
r=\sum_{m\leq j} \sigma' M_{j -m,1-j}(\delta',\sigma')(r) t^m\ee and
\be rt^j =\sum_{m\leq j} t^m \sigma M_{j
-m,1-j}(\delta,\sigma)(r).\ee For $j\geq 0$ we recall the formulae
\be t^j r=\sum_{0\leq m\leq j} M_{j-m,m}(\delta,\sigma)(r) t^m\ee
and \be rt^j =\sum_{0\leq m\leq j}t^m M_{j-m,m}(\delta',\sigma')(r)
.\ee Finally, the multiplication in $R((t;\sigma,\delta))$ and in
$R(t;\sigma,\delta)$ is explicitly given by the following   formula
\be \label{mult}(\sum_{j\in\z} a_j t^j)(\sum_{l\in\z} b_l
t^l)=\sum_{m\in\z}c_m t^m \;\;\;\;\mbox{ with }\ee
\begin{eqnarray}
 c_m&:=&c_m^++c_m^-,\\ \label{cm+} c_m^+&:=& \sum_{j\geq n\geq 0} a_j
M_{j-n,n}(\delta,\sigma)(b_{m-n}) \;\;\mbox{  and }\\ \label{cm-}
c_m^-&:=&\sum_{ n\leq j< 0}
a_j\sigma'M_{j-n,1-j}(\delta',\sigma')(b_{m-n}).\end{eqnarray}  An
analogous formula holds for {\em right} Laurent series: \be
\label{mult-right}(\sum_{j\in\z}t^j a_j )(\sum_{l\in\z} t^lb_l
)=\sum_{m\in\z}t^m d_m \;\;\;\; \ee with
\begin{eqnarray}
 d_m&:=&d_m^++d_m^-,\\ \label{cm+right} d_m^+&:=& \sum_{j\geq n\geq
 0}
M_{j-n,n}(\delta',\sigma')(a_{m-n}) b_j\;\;\mbox{  and }\\
\label{cm-right} d_m^-&:=&\sum_{ n\leq j< 0}  \sigma
M_{j-n,1-j}(\delta,\sigma)(a_{m-n})b_j.\end{eqnarray}

For the notion of the Krull-dimension $\kappa$ we refer the reader
to \cite[chap.\ 6]{mc-rob}. If we consider a ring $S$ as left or
 right module over itself we write $_SS$ or $S_S$, respectively. For a
 left (right) module $M$ we denote by
 $\mathcal{L}(M)$ the lattice of all left (right)
 submodules of $M.$

\begin{lem}\label{Art} Suppose that $\delta = 0$. We then have
$\kappa(B_B)\leq \kappa(R_R )$ and $\kappa(_BB)\leq \kappa(_RR)$. In
particular, if $R$ is a (left or right) Artinian ring, so is $B$.
\end{lem}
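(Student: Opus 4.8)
The plan is to exploit that, under the hypothesis $\delta=0$, the relation \eqref{relations} reads $tr=\sigma(r)t$, so that $t$ is a \emph{unit} in $B$ with $x\mapsto txt^{-1}$ restricting on $R$ to the automorphism $\sigma$, and $t^{i}r=\sigma^{i}(r)\,t^{i}$ for every $i\in\z$; moreover $B$ is the ring of Laurent series $\sum_{i\gg-\infty}r_{i}t^{i}$ with finite negative part. First I would attach to every nonzero $b=\sum r_{i}t^{i}\in B$ its order $v(b):=\min\{i:r_{i}\neq 0\}$ and its leading coefficient $\ell(b):=r_{v(b)}\in R$, and to every right ideal $I$ of $B$ the subset $L(I):=\{\ell(b):0\neq b\in I\}\cup\{0\}\subseteq R$. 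Since $It=It^{-1}=I$ (because $t$ is a unit), $L(I)$ is also exactly the set of coefficients of $t^{n}$ occurring in the elements of $I$ of order $\geq n$, for \emph{every} $n\in\z$; from this description $L(I)$ is visibly closed under addition, and the identity $\ell(br)=\ell(b)\,\sigma^{v(b)}(r)$ — which uses $\delta=0$ — together with surjectivity of $\sigma$ shows that $L(I)$ is a right ideal of $R$. Clearly $I\subseteq J$ implies $L(I)\subseteq L(J)$, so $L$ is an order preserving map $\mathcal{L}(B_{B})\to\mathcal{L}(R_{R})$.

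The crucial point is that $L$ is \emph{strictly} increasing, i.e.\ that $I\subseteq J$ together with $L(I)=L(J)$ forces $I=J$, and this rests on two observations. First, every right ideal of $B$ is closed for the $t$-adic topology on $B$ (the linear topology with $\{t^{n}A\}_{n\geq 0}$ a basis of neighbourhoods of $0$): the ring $A=R[[t;\sigma]]$ is Noetherian and $t$-adically complete with $tA=At\subseteq\Jac(A)$, so by the Krull intersection theorem (cf.\ \cite[ch.\ 4]{mc-rob}) one has $\bigcap_{n}(\mathfrak{a}+t^{n}A)=\mathfrak{a}$ for every right ideal $\mathfrak{a}$ of $A$; this transfers to $B$ because right ideals of $B=A_{T}$ are extended from $A$, because $B=\bigcup_{k\geq 0}t^{-k}A$, and because $It^{k}=I$. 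Granting this, given $b\in J$ one builds $\beta_{0},\beta_{1},\dots\in I$ with $v(b-\beta_{0}-\dots-\beta_{k})$ strictly increasing: if $c_{k}:=b-\beta_{0}-\dots-\beta_{k-1}$ is nonzero it lies in $J$, so $\ell(c_{k})\in L(J)=L(I)$, and after multiplying a witnessing element of $I$ by a suitable power of $t$ we obtain $\beta_{k}\in I$ with $v(\beta_{k})=v(c_{k})$ and $\ell(\beta_{k})=\ell(c_{k})$, whence $c_{k+1}=c_{k}-\beta_{k}$ has strictly larger order. The partial sums $\beta_{0}+\dots+\beta_{k}$ lie in $I$ and converge $t$-adically to $b$, so $b$ lies in the $t$-adic closure of $I$, which is $I$.

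It then remains to conclude that $\kappa(B_{B})\leq\kappa(R_{R})$. This follows from the general fact that a strictly increasing order preserving map of posets $\mathcal{L}(B_{B})\to\mathcal{L}(R_{R})$ forces the deviation of the source to be at most that of the target: a strictly descending chain of right ideals of $B$ maps to a strictly descending chain of right ideals of $R$, $L$ restricts to strictly increasing order preserving maps between the corresponding intervals, and one runs the obvious transfinite induction on $\kappa(R_{R})$ (cf.\ \cite[ch.\ 6]{mc-rob}). The inequality $\kappa({}_{B}B)\leq\kappa({}_{R}R)$ is proved symmetrically, working with right Laurent series $\sum_{i}t^{i}r_{i}$ and the analogous formulae from the text. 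Finally, if $R$ is right (resp.\ left) Artinian then $\kappa(R_{R})\leq 0$ (resp.\ $\kappa({}_{R}R)\leq 0$), hence $\kappa(B_{B})\leq 0$ (resp.\ $\kappa({}_{B}B)\leq 0$), which means that $B$ is right (resp.\ left) Artinian.

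The step I expect to be the main obstacle is the strict monotonicity of $L$, and within it the closedness of one-sided ideals of $B$ in the $t$-adic topology; this is precisely where the hypothesis $\delta=0$ (through the well-behaved leading coefficient map) and the Noetherian, complete nature of $A$ are really used, and it is what prevents the Krull dimension from increasing by one, as it does when passing from $R$ to the skew power series ring $A$ itself.
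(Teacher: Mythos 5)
Your proposal is correct and takes essentially the same approach as the paper: both introduce the order and leading-coefficient functions, form the leading-coefficient map $\mathcal{L}(B_B)\to\mathcal{L}(R_R)$ (using $\delta=0$ to see the image is an ideal), and reduce everything to showing this map preserves proper containment via a successive-approximation argument whose partial sums form a convergent series. The only difference is cosmetic: where you invoke $t$-adic closedness of right ideals of $B$ via the Krull intersection theorem on $A$ and then transfer it along $B=\bigcup_k t^{-k}A$, the paper instead stays inside $A$ and appeals directly to the fact that $J_1\cap A$ is closed because $A$ is a Noetherian pseudocompact ring; the convergence argument and the conclusion are identical.
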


\begin{proof}
The `left'-version following by symmetry we only consider the
`right'-version. For the purposes of this proof we call $i(b) :=
\min\{i: a_i \neq 0\}$ the order of any element $b =
\sum_{i\gg-\infty} a_i t^i$ in $B$ and $l(b) := a_{i(b)}$ its
leading coefficient (with $l(0) := 0$). If $J$  is a right ideal
 of $B$ we write $l(J)$ for the set of leading coefficients of
 elements in $J.$  Using that $\delta=0$ it is easy to check that $l(J)$ is a right ideal
 of $R,$ thus we obtain a map of partially ordered sets
 $l: \mathcal{L}(B_B)\to \mathcal{L}(R_R)$.
We just have to show that this map $l$ preserves `proper
containment'. Thus let $J_1\subsetneqq J_2$ be  right ideals of $B$.
 Then there exists an element $b\in
J_2\cap A$ which is not contained in $J_1.$ We shall derive a
contradiction   assuming that $l(J_1)=l(J_2):$  then we find $j_1\in
J_1 $ with $l(j_1)=l(b)$ and, after possibly multiplying   from the
right by a suitable power of $t,$ with $i(j_1)=i(b);$ in particular
$j_1$ belongs to $J_1\cap A.$ Similarly we find $j_2\in J_1\cap A$
such that $l(j_2)=l(b-j_1)$ and $i(j_2)=i(b-j_1)$ and inductively a
sequence of elements $j_n\in J_1\cap A$ with strictly increasing
order and such that the series $\sum_{n\geq 1} j_n$ converges to
$b,$ which thus is an element of the closed ideal $J_1\cap A$ ($A$
is pseudocompact and Noetherian), a contradiction.
\end{proof}

Let $I$ be a two-sided ideal of $R$ which is $\sigma$-, $\sigma'$-
and $\delta$-stable. We define the left $R$-submodule $I_B$ of $B$
to consist of all $b=\sum b_i t^i\in B$ with $b_i \in I$ for all
$i\in \z.$

\begin{lem}\label{twosided} Let $I,J$ be $\sigma$-,   $\sigma'$- and
$\delta$-stable two-sided ideals of $R.$ Then
 \begin{enumerate}
 \item $I_B=IB= BI$ is a two-sided ideal of $B.$
   \item $J_B\cdot I_B= (JI)_B.$
 \end{enumerate}
\end{lem}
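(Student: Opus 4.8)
The plan is to prove (i) first and then deduce (ii) from it. For (i), the inclusion $IB \subseteq I_B$ is immediate from the multiplication formulae \eqref{mult}: if $a = \sum a_j t^j$ has all $a_j \in I$ and $b = \sum b_l t^l$ is arbitrary, then each coefficient $c_m$ of $ab$ is a (convergent) sum of terms of the form $a_j M_{\ldots}(\delta,\sigma)(b_\ast)$ with $a_j \in I$, hence lies in $I$; here one uses nothing about $I$ except that it is a left ideal, together with continuity of multiplication so that the infinite sum stays in the closed set $I$ (recall $I$ is closed since $R$ is pseudocompact). For the reverse inclusion $I_B \subseteq IB$ one needs the $\sigma$-, $\sigma'$- and $\delta$-stability of $I$: given $b = \sum b_i t^i \in I_B$, I want to realise it as $\sum a_j t^j \cdot (\text{something})$, but the cleanest route is to show directly that $I_B = I \cdot_R B$ where the product is taken in the obvious coefficientwise sense and then observe that because $\sigma^{\pm 1}(I) \subseteq I$ and $\delta(I) \subseteq I$, the ``rewriting'' of $\sum (\text{elt of } I) t^i$ into standard form $\sum a_j t^j$ keeps the coefficients in $I$; concretely, $I_B$ is visibly a left $R$-submodule, it is closed, and it is generated as a closed left $R$-submodule by $\{t^i : i \in \z\}$ together with... — better: observe $I_B$ is the closure of $\sum_{i} I t^i$ (finite sums), and $\sum_i r_i t^i$ with $r_i \in I$ already lies in $IB$ since it is a limit of finite left-$R$-combinations $\sum r_i t^i = \sum r_i \cdot t^i$ with $r_i \in I \subseteq IB$ and $t^i \in B$, and $IB$ is closed (it is the image of the continuous map $I \hat\otimes B \to B$, or simply note $IB$ is a closed left ideal because $B$ is pseudocompact Noetherian and finitely generated left ideals over such rings are closed). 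Thus $I_B = IB$. The identity $I_B = BI$ follows symmetrically using the right multiplication formulae \eqref{mult-right} together with the same stability hypotheses (this is precisely where $\sigma'$-stability, i.e.\ $\delta'$-stability, is used), and then $I_B = IB = BI$ is a two-sided ideal.

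For (ii), using (i) we have $J_B \cdot I_B = (BJ)(IB) = B (JI) B$. Since $JI$ is again a $\sigma$-, $\sigma'$- and $\delta$-stable two-sided ideal of $R$ — stability under $\sigma^{\pm 1}$ is clear as $\sigma$ is a ring homomorphism, and $\delta(JI) \subseteq \delta(J)\sigma(I) + J\delta(I) \subseteq JI$ by the $\sigma$-Leibniz rule together with $\sigma(I)\subseteq I$ and $\delta(I), \delta(J) \subseteq I, J$ — part (i) applied to $JI$ gives $B(JI)B = (JI)_B$. So it remains to check $B(JI)B = (JI)_B$, but that is exactly the content of (i) for the ideal $JI$, namely $(JI)B = (JI)_B = B(JI)$, hence $B(JI)B = (JI)_B B = (JI)_B$ (the last step since $(JI)_B$ is a right ideal by (i), or directly $(JI)_B B \subseteq (JI)_B$ and $\supseteq$ is trivial).

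The one genuine obstacle is the reverse inclusion $I_B \subseteq IB$ in (i): a priori an element of $I_B$ written in the standard left-coefficient form need not be a \emph{finite} left-$R$-combination of powers of $t$, so one must argue by a convergence/closedness argument rather than algebraically. The key points making this work are that $R$ is pseudocompact so $I$ is closed, that $I_B$ is (by the definition of $B$ as a projective limit of Artinian-coefficient Laurent series rings) the projective limit of the $\bar I$-versions which are honestly $\bar I \cdot (R/I_k)((t;\ldots))$ in the Artinian case by Lemma \ref{Art}-type reasoning, and that $IB$ is a closed left ideal of the Noetherian pseudocompact ring $B$. I would spell out the Artinian reduction: for Artinian $R$ the Ore localisation $B = A_T$ of Proposition \ref{skewLaurent} has elements with \emph{finite} negative part, so $I_B = IB$ is a direct computation with the finite formulae, and then passing to the projective limit over $R/I_k$ (compatibly with the filtration $I_\bullet$ used to construct $B$) yields the general case.
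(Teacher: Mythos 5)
Your overall structure (establish (i), deduce (ii) by applying (i) to $JI$) matches the paper's, your stability check for $JI$ is worth recording (the paper takes it for granted), and the easy inclusions $IB\subseteq I_B$, $I_B\cdot B\subseteq I_B$, $BI\subseteq I_B$ via the formulae \eqref{mult} and \eqref{mult-right} together with the $\sigma$-, $\sigma'$-, $\delta$-stability are fine. One small slip: $a_j M_{\ldots}(\delta,\sigma)(b_\ast)\in I$ uses that $I$ is a \emph{right} ideal, not a left ideal. The genuine problem is the only nontrivial point in (i), the reverse inclusion $I_B\subseteq IB$: neither of the two routes you offer is available in the setting of this lemma.

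The ``closedness of $IB$'' route does not work as stated because here (subsection 1.1) $B$ is the Ore localisation $A_T$ of Proposition \ref{skewLaurent}. Its elements have finite negative part, it carries no pseudocompact topology at this point of the paper, and it is \emph{not} realised as a projective limit over $R/I_k$. That projective-limit description, and the pseudocompact/Noetherian structure on the Laurent ring, belong to the other, infinite-negative-part ring $R\lk t;\sigma,\delta]]$ of subsection 1.2, whose analogue of this statement is Lemma \ref{twosided-general}. For the same reason, the Artinian/projective-limit fallback in your last paragraph does not apply to the $B$ of this lemma; you appear to have been proving Lemma \ref{twosided-general} instead of Lemma \ref{twosided}. (Even for that lemma, the paper does not argue by Artinian reduction.)

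Your closedness idea can, however, be salvaged by running it in $A$ rather than in $B$, since $A$ \emph{is} pseudocompact and Noetherian. The right ideal $IA=\sum_j u_jA$ (with $u_1,\ldots,u_m$ generating $I$ as a right ideal of the Noetherian ring $R$) is finitely generated, hence closed in $A$; the finite sums $\sum_i r_it^i$ with $r_i\in I$ lie in $IA$ and are dense in $I_A$, so $I_A=IA$. Given $b=\sum_{i\geq-n}b_it^i\in I_B$, then $bt^n\in I_A=IA$, and multiplying on the right by $t^{-n}$ gives $b\in IB$. The paper's own argument (invoked tersely here, and spelled out for the harder case in the proof of Lemma \ref{twosided-general}) is somewhat different and avoids topology on $A$ or $B$ altogether: fix generators $u_1,\ldots,u_m$ of $I$ as a right ideal, use pseudocompactness of $R$ to get a strictly increasing sequence $j(1)<j(2)<\cdots$ with $\sum_j u_j\Jac(R)^k\supseteq I\cap\Jac(R)^{j(k)}$, and then write $b_i=\sum_j u_jc_i^{(j)}$ compatibly so that $c^{(j)}:=\sum_i c_i^{(j)}t^i$ belongs to $B$; then $b=\sum_j u_jc^{(j)}\in IB$. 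This uses only the pseudocompactness of $R$, not of $B$, which is why the same argument transfers verbatim to both versions of the Laurent series ring.
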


\begin{proof}(i) Since $I$ is finitely generated as a right ideal in $R$ we have
$I_B=IB$. The formula \eqref{mult} implies that $BI  \subseteq I_B =
I B$. By symmetry we must even have $BI = IB$.
 (ii) follows  immediately from (i).
\end{proof}

In \cite[{\S}1]{sch-ven} we constructed a descending exhaustive ring
filtration $I_k,$ $k\geq 0,$  by two-sided ideals in $R$ which are
$\sigma$- and $\sigma'$-stable and satisfy $\delta(I_k)\subseteq
I_{k+1},$ which we refer to as the {\em standard} filtration. By
Lemma \ref{twosided} the filtration $I_\bullet$ of $R$ induces a
ring filtration $J_\bullet$ of $B$ by setting $J_k :=(I_k)_B$ for
all $k\geq 0.$

Henceforth we assume that $R$ is (left or right) Artinian. Then the filtration $I_\bullet$ and thus
also $J_\bullet$ stabilises. Furthermore, as $R$ is Noetherian, the $J_k$ are finitely generated
(left and right) $B$-modules by Lemma \ref{twosided}. Hence the subquotients $J_k/J_{k+1}$ are
finitely generated modules over $B/J_1 = R/I_1 \otimes_R B \cong (R/I_1)((t;\bar{\sigma}))$
 and thus have
finite length by Lemma \ref{Art}. We have shown the following

\begin{prop}\label{Art-general}
Suppose that the standard filtration is separated, i.e.\ $I_k=0$ for
$k\gg0.$  Then, if $R$ is (left or right) Artinian, so is $B.$
\end{prop}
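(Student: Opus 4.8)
The plan is to reduce the Artinian property of $B$ to the Artinian property of the graded pieces of the filtration $J_\bullet$, exactly as sketched in the paragraph preceding the statement. First I would note that since the standard filtration is separated, $I_k = 0$ for $k \gg 0$, say $I_N = 0$, whence $J_N = (I_N)_B = 0$; thus $B$ carries a finite filtration
\[ B = J_0 \supseteq J_1 \supseteq \cdots \supseteq J_{N-1} \supseteq J_N = 0 \]
by two-sided ideals, by Lemma \ref{twosided}(i). A standard fact from module theory says that a module possessing a finite filtration whose successive subquotients are all (left, resp.\ right) Artinian is itself (left, resp.\ right) Artinian. So it suffices to show each $J_k/J_{k+1}$ is Artinian as a left (resp.\ right) $B$-module.

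Next I would analyse the subquotients. Each $J_k/J_{k+1}$ is a module over $B/J_1$, and $J_1 = (I_1)_B = I_1 B = B I_1$ by Lemma \ref{twosided}(i), so that $B/J_1 \cong (R/I_1) \otimes_R B$. Since $\delta(I_0) = \delta(R) \subseteq I_1$ — this is the defining property of the standard filtration — the induced skew derivation $\bar\delta$ on $R/I_1$ vanishes, and hence $B/J_1 \cong (R/I_1)((t;\bar\sigma))$ with \emph{trivial} derivation. Now $R$ Artinian forces $R/I_1$ Artinian, so Lemma \ref{Art} (which requires exactly the hypothesis $\delta = 0$) applies to conclude that $B/J_1$ is a (left or right) Artinian ring. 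Finally, because $R$ is Noetherian, each $I_k$ is a finitely generated two-sided ideal, so each $J_k = (I_k)_B = I_k B$ is a finitely generated $B$-module (left and right) by Lemma \ref{twosided}; hence $J_k/J_{k+1}$ is a finitely generated module over the Artinian ring $B/J_1$, and a finitely generated module over an Artinian ring is Artinian. This completes the argument.

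The only genuinely delicate point is making sure all the hypotheses line up cleanly: one must check that $I_1$ is simultaneously $\sigma$-, $\sigma'$-, and $\delta$-stable so that Lemma \ref{twosided} genuinely applies to produce the two-sided ideal structure on $J_\bullet$ (this is guaranteed by the construction of the standard filtration in \cite{sch-ven}), and one must verify the compatibility $B/J_1 \cong (R/I_1)\otimes_R B$ as rings, not merely as modules — this follows from $J_1 = I_1 B = B I_1$ and a direct inspection of the multiplication formula \eqref{mult}, under which the coefficients of a product lie in $I_1$ as soon as one factor has all coefficients in $I_1$. Everything else is bookkeeping: the finiteness of the filtration comes from separatedness plus stabilisation (here $R$ Artinian guarantees $I_\bullet$ stabilises, and separatedness then forces it to reach $0$), and the two module-theoretic lemmas (finite filtration with Artinian subquotients $\Rightarrow$ Artinian; finitely generated over Artinian $\Rightarrow$ Artinian) are entirely standard. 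Thus the main obstacle is not any hard estimate but simply the careful verification that the reduction $B \rightsquigarrow B/J_1 \rightsquigarrow \text{(Lemma \ref{Art})}$ is legitimate, i.e.\ that passing to $R/I_1$ genuinely kills $\delta$.
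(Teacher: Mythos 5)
Your argument is correct and is essentially the same as the paper's: filter $B$ by the finite filtration $J_\bullet$ coming from the separated standard filtration $I_\bullet$, identify $B/J_1$ with $(R/I_1)((t;\bar\sigma))$ (where $\bar\delta=0$ because $\delta(R)\subseteq I_1$), invoke Lemma~\ref{Art} to see $B/J_1$ is Artinian, and note each $J_k/J_{k+1}$ is finitely generated over $B/J_1$ by Lemma~\ref{twosided}. Your write-up is merely a more explicit version of the paragraph immediately preceding the statement.
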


\begin{rem}\label{standardversusany} Let $R$ be (left or right) Artinian.
Note that the standard filtration is separated if and only if there
exist {\em any}    separated descending  ring filtration $I_k,$
$k\geq 0,$  by two-sided ideals in $R$ which are $\sigma$- and
$\sigma'$-stable and satisfy $\delta(R)\subseteq I_{1}.$
\end{rem}

\begin{proof}
 Assume that such a filtration $I_\bullet$ is given and let $k$ be any natural number. For $l\geq 1$ the  $l$th ideal of
 the standard filtration is generated by all subgroups \be\label{standard}M_{m_1}(R)\cdot \ldots \cdot
 M_{m_r}(R)\ee where  $m_1+\ldots + m_r=l$ is any partition of $l$ with $m_j>0,$ $1\leq j\leq r,$ and
 $M_{m_j}$ is a non-commutative monomial in $\delta,$ $\sigma$ and $\sigma'$ with at least $m_j$ factors
 $\delta$. Since $R$ is Artinian and due to the $\sigma$-nilpotence
 of $\delta$ we find a positive number $m$ such that $M(R)=0$ for
 all such monomials $M$ with at least $m$ factors $\delta.$ Choosing $l>
 km$ we see that the $l$th step of the standard filtration is
 contained in $I_k$   because the only non-zero contributions of the
 form
 \eqref{standard} have at least $k$ factors, which all belong  to
 $I_1$ by assumption.
\end{proof}

\subsection{Skew Laurent series rings with infinite negative part}

The following definition leads to a reasonable, in both directions
infinite, skew Laurent series ring: Let
\[B:=R\hspace{-3 pt}\ll\hspace{-2 pt} t;\sigma,\delta]]\] consist of all formal
infinite sums $\sum_{i\in \z} r_i t^i$ such that $r_j$ tends to zero
in the pseudocompact topology of $R$ for $j\leq 0$ running to
$-\infty.$

$B$ is naturally endowed with the exhaustive and separated
descending filtration $(F^kB)_{k\geq 0}$ of left $R$-modules defined
as
\[F^kB:= \big(\prod_{i\in\z} \Jac(R)^kt^i\big)\cap B,\]
where $\Jac(R)$ denotes the Jacobson radical of $R.$  The topology
induced by this filtration will be called the {\em strong} topology:
There is another interesting topology on $B$ but which will not be
used in this paper. This {\em weak} topology is given by the system
of open zero neighbourhoods $\{F^kB + At^m\}_{k,m \geq 0}$.

\begin{rem}
 $B$ is a complete $R$-module with respect to the strong topology.
\end{rem}

Note that $\sigma(\Jac(R))=\Jac(R),$ in particular the
$\Jac(R)$-adic filtration is $\sigma$- and $\sigma'$-stable.  In
general we are not assuming that $\Jac(R)$ is also stable under
$\delta,$ but the continuity of $\delta$ implies that there is a
natural number $s\geq 1$ such that \be
\delta(\Jac(R)^s),\;\;\delta'(\Jac(R)^s)\subseteq \Jac(R). \ee By
induction one shows immediately that \be
\delta(\Jac(R)^{sk}),\;\;\delta'(\Jac(R)^{sk})\subseteq \Jac(R)^k
\ee for all $k\geq 0.$ If $M(Y,Z)$ denotes a noncommutative monomial
with $m$ factors $Y$ and arbitrarily, but finitely many factors $Z$,
we obtain \be\label{stetig}
M(\delta,\sigma)(\Jac(R)^{s^mk}),\;\;M(\delta',\sigma')(\Jac(R)^{s^mk})\subseteq
\Jac(R)^k\ee for all $k\geq 0.$

%The relations \eqref{relations} of the skew polynomial  or power
%series ring  induce iteratively the relations
%\begin{eqnarray}\label{rel-tinverse}
%t^{-1}r&=&\sum_{i\leq -1} \sigma'\delta'^{-i-1}(r) t^i,\\
%r t^{-1}&=&\sum_{i\leq -1} t^i\sigma\delta^{-i-1}(r)
%\end{eqnarray}
%in $B$ (viewed as formal expressions or convergent series with
%respect to the product topology). More generally, for $j<0,$ we
%obtain inductively \be t^j r=\sum_{m\leq j} \sigma' M_{j
%-m,1-j}(\delta',\sigma')(r) t^m,\ee where , for any integers $k,l
%\geq 0$, $M_{k,l}(Y,Z)$ denotes the sum of all    noncommutative
%monomials in two variables $Y,Z$ with $k$ factors $Y$ and $l$
%factors $Z.$ For $j\geq 0$ we recall the formula \be t^j
%r=\sum_{0\leq m\leq j}  M_{j-m,m}(\delta,\sigma)(r) t^m.\ee
%
%
%
%
% This leads to the following multiplication formula
%\be \label{mult}(\sum_{j\in\z} a_j t^j)(\sum_{l\in\z} b_l
%t^l)=\sum_{m\in\z}c_m t^m \;\;\;\;\mbox{ with }\ee
%\begin{eqnarray}
% c_m&:=&c_m^++c_m^-,\\ \label{cm+} c_m^+&:=& \sum_{j\geq n\geq 0} a_j
%M_{j-n,n}(\delta,\sigma)(b_{m-n}) \;\;\mbox{  and }\\ \label{cm-}
%c_m^-&:=&\sum_{ n\leq j< 0}
%a_j\sigma'M_{j-n,1-j}(\delta',\sigma')(b_{m-n}).\end{eqnarray}  An
%analogue formula holds for right Laurent series.

Let $I_n,$ $n\geq 0,$ be a $\sigma$-,   $\sigma'$- and
$\delta$-stable separated exhaustive descending filtration
  of $R,$ in particular satisfying $I_k\cdot
I_l\subseteq I_{k+l},$ consisting of (closed two-sided) ideals.
% and such that $\delta(I_k)\subseteq I_{k+1}.$
We define the exhaustive and separated filtration
\[J _k:= \big(\prod_{i\in\z} I_kt^i\big)\cap B \] of $B$ consisting
of strongly closed left $R$-submodules.\\

\noindent {\bf Assumption (I):} There exists a filtration $(I_k)_k$
as above
where the ideals $I_k$ are all {\em open} in $R.$\\

A slightly stronger version is the following\\

\noindent {\bf Assumption ($\mathrm{\mathbf{SI_0}}$):} There exists
a filtration $(I_k)_k$ as above
where the ideals $I_k$ are all {\em open} in $R$ and such that $\delta(R)\subseteq  I_{ 1}.$\\

Later we shall also consider the strong version in which the analogous condition holds in all degrees.\\

\noindent {\bf Assumption (SI):} There exists a filtration $(I_k)_k$
as above
where the ideals $I_k$ are all {\em open} in $R$ and such that $\delta(I_k)\subseteq  I_{k+1}$ for all $k\geq 0.$\\

\begin{rem}\label{Jac}
If $\delta(\Jac(R))\subseteq \Jac(R)$ holds, then $\mathrm{(I)}$ is
satisfied (with $I_\bullet$  the $\Jac(R)$-adic filtration). If, in
addition, $\delta(R)\subseteq \Jac(R)$ or even both
$\delta(R)\subseteq \Jac(R)$ and $\delta(\Jac(R))\subseteq
\Jac(R)^2$ hold, then $(\mathrm{SI}_0)$ and $(\mathrm{SI})$ are
satisfied, respectively.
\end{rem}

If we assume $\mathrm{(I)}$, then the filtrations $J_k$ and $F^kB$
are compatible, in particular also the filtration $(J_k)_k$ induces
the strong topology on $B$ and  we obtain an isomorphism
\be\label{prolim} B\cong \projlim{k} B/J_k\ee of topological
$R$-modules. On the other hand   we then have an isomorphism of left
$R$-modules \be\label{BmodI} B/J_k\cong (R/I_k)
((t;\overline{\sigma},\overline{\delta})),\ee where
$\overline{\sigma},$ $\overline{\sigma'},$ $\overline{\delta}$ and
$\overline{\delta'}$ denote the induced maps on $R/I_k.$ Note that
since $I_k$ is open in $R$  the $\sigma$-nilpotence of $\delta$
implies that both $\overline{\delta}$ and $\overline{\delta'}$ are
nilpotent and thus the latter ring exists by proposition
\ref{skewLaurent}.

Below we shall show that the formula \eqref{mult} defines an
obviously distributive multiplication law on $B,$ which, for every
$k,$ induces by construction the ring structure of
$(R/I_k\big)((t;\overline{\sigma},\overline{\delta}))$ and thus
coincides with the ring multiplication of the projective limit ring.
In particular, the multiplication law on $B$ is also associative.

\begin{prop}\label{skewLaurentinfinite} If $\mathrm{(I)}$ is satisfied,
 then the formula \eqref{mult} defines a topological ring structure
on $B$ with respect to  the strong topology. Moreover, if {\rm
($\mathrm{SI}_0$)} holds, then $B$ is a pseudocompact ring.
\end{prop}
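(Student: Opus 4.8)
The plan is to reduce everything to the nilpotent case treated above by identifying $B$, as a \emph{ring}, with the projective limit $\projlim{k}B/J_k$. First note that under $\mathrm{(I)}$ each $B/J_k\cong(R/I_k)((t;\overline{\sigma},\overline{\delta}))$ is genuinely a ring: since $I_k$ is open, the $\sigma$-nilpotence of $\delta$ forces $\overline{\delta}$ and $\overline{\delta'}$ to be nilpotent on $R/I_k$, so Proposition \ref{skewLaurent} applies. Because $I_\bullet$ is a descending ring filtration which is $\sigma$-, $\sigma'$- and $\delta$-stable, the canonical surjections $R/I_{k+1}\twoheadrightarrow R/I_k$ are ring homomorphisms intertwining the induced automorphisms and derivations, hence they induce ring homomorphisms $B/J_{k+1}\twoheadrightarrow B/J_k$, and these coincide with the transition maps of the projective system in $\eqref{prolim}$. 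Moreover, the compatibility of $(J_k)_k$ with $(F^kB)_k$ means that each $J_k$ is open for the strong topology, so every $B/J_k$ carries the discrete topology and $\eqref{prolim}$ realises $B$ as a projective limit of discrete rings. It therefore only remains to check that $\eqref{mult}$ is a well-defined law $B\times B\to B$ which reduces modulo each $J_k$ to the multiplication of $B/J_k$; granting this, distributivity of $\eqref{mult}$ is visible from its shape, the unit is $\eins=t^0$ (it maps to the identity of each $B/J_k$), associativity follows from $\bigcap_k J_k=0$ and associativity in the $B/J_k$, and continuity for the strong topology is automatic since $B\cong\projlim{k}B/J_k$ with all factors discrete.

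To see that $\eqref{mult}$ makes sense, fix $a=\sum_j a_jt^j$ and $b=\sum_l b_lt^l$ in $B$ and reduce the sums $\eqref{cm+}$ and $\eqref{cm-}$ modulo $I_k$: one gets exactly the multiplication formula $\eqref{mult}$ computed inside $(R/I_k)((t;\overline{\sigma},\overline{\delta}))$ (which is legitimate there since $\overline{\delta},\overline{\delta'}$ are nilpotent). As $\overline{\delta},\overline{\delta'}$ are nilpotent and both $\overline{a},\overline{b}$ have finite negative part in $B/J_k$, these sums have only finitely many non-zero terms modulo $I_k$, and they are compatible for varying $k$; since $(I_k)_k$ is cofinal among the open ideals of $R$ (a consequence of $\mathrm{(I)}$) this says precisely that the series $c_m=c_m^++c_m^-$ of $\eqref{mult}$ converge in $R$, with values the coefficients of the element of $\projlim{k}B/J_k=B$ whose $k$-th component is $\overline{a}\cdot\overline{b}$. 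Hence $\sum_m c_mt^m\in B$ and its reduction modulo $J_k$ equals $\overline{a}\cdot\overline{b}$ for all $k$, as needed. This is the only delicate point in the whole argument: it is exactly the passage to the Artinian quotients that guarantees the negative parts produced by $\eqref{mult}$ still satisfy the convergence condition defining $B$, and it simultaneously delivers associativity, which would be unpleasant to verify head-on from $\eqref{mult}$.

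Finally, assume $(\mathrm{SI}_0)$. Then each $R/I_k$ is Artinian because $I_k$ is open in the pseudocompact ring $R$. From $\delta(R)\subseteq I_1$ we get $\overline{\delta}(R/I_k)=(\delta(R)+I_k)/I_k\subseteq(I_1+I_k)/I_k$, while $\big((I_l+I_k)/I_k\big)_{l\geq 0}$ is a descending filtration of $R/I_k$ by $\overline{\sigma}$-, $\overline{\sigma'}$-stable two-sided ideals which is separated (note that $I_l\subseteq I_k$ for $l\gg 0$). By Remark \ref{standardversusany} the standard filtration of $R/I_k$ is therefore separated, so $B/J_k=(R/I_k)((t;\overline{\sigma},\overline{\delta}))$ is Artinian by Proposition \ref{Art-general}. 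Consequently $B\cong\projlim{k}B/J_k$ is a projective limit, along surjective ring homomorphisms, of Artinian — hence pseudocompact — rings, and is thus itself a pseudocompact ring.
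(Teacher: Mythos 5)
Your proof is correct, but it reaches the key point --- well‑definedness of the formula \eqref{mult} --- by a different route than the paper. The paper's proof first establishes that \eqref{mult} defines a map $B\times B\to B$ directly, via $\Jac(R)$‑adic estimates using \eqref{stetig} and the three thresholds $N_0,N_1,N_2$; this step does not even need assumption $\mathrm{(I)}$. Only afterwards does the paper invoke the identification with $\projlim{k}B/J_k$ to deduce associativity, and checks continuity separately through $F^kB\cdot B\subseteq F^kB$. You instead reduce everything modulo $I_k$ from the outset and extract well‑definedness, distributivity, associativity, and continuity in one stroke from the projective‑limit ring structure. This is cleaner given $\mathrm{(I)}$, but it hinges on the $(I_k)$ being cofinal among open ideals of $R$, which you assert but should justify: it follows because the $I_k$ are open with $\bigcap_k I_k=0$ and a pseudocompact ring is linearly compact, so for any open ideal $J$ one has $J=\bigcap_k(I_k+J)=I_K+J$ for $K\gg 0$. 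The paper's direct estimate is strictly stronger in that it proves well‑definedness without $\mathrm{(I)}$; your argument only proves what is asserted in the proposition. For the pseudocompactness claim under $(\mathrm{SI}_0)$ you and the paper use essentially the same chain of Remark \ref{standardversusany} and Proposition \ref{Art-general}, though you spell out that $\big((I_l+I_k)/I_k\big)_l$ is the auxiliary separated filtration of $R/I_k$ fed into the remark, which the paper leaves implicit.
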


\begin{proof} We will first show that \eqref{mult} (actually without
even assuming $\mathrm{(I)}$) gives a well-defined map $B\times B\to
B.$ To this end we check for the positive and negative parts $c_m^+$
and $c_m^-$ separately, that the defining sums in \eqref{cm+} and
\eqref{cm-} converge, independently of  the order of summation, and
that for $m$ tending to $-\infty$ the $c_m^\pm$ converge to zero:
Let $k$ be any given positive number and $m$ be any fixed integer.
Then by the $\sigma$-nilpotence of $\delta$ there is a
  constant $N_1\gg0$ such that $ M_{j-n,n}(\delta,\sigma)(R)$
  (and for later purposes $M_{j-n,1-j}(\delta',\sigma')(R)$) and thus
  also
$a_jM_{j-n,n}(\delta,\sigma)(b_{m-n})$ lies in $\Jac(R)^k$ for all
$j-n\geq N_1.$ On the other hand, by the definition of $B,$ there
exists a   constant $N_2\gg0$ such that $b_{m-n}\in
\Jac(R)^{s^{N_1}k}$ for all $m-n\leq -N_2.$  Thus it follows from
\eqref{stetig} that the summand
$a_jM_{j-n,n}(\delta,\sigma)(b_{m-n})\in \Jac(R)^k$ whenever $n\geq
N_2+m$ or $j-n\geq N_1.$ Hence all but possibly  the finitely many
summands of \eqref{cm+} for $n\leq j < N_1+N_2 +m$ and $0\leq n <
N_2 +m$ lie in $\Jac(R)^k.$ This implies the convergence of the
positive part. Now we will show that $c_m^+$ belongs to $\Jac(R)^k$
if $m$ is small enough: We have already seen that all summands
outside this finite set of exceptions lie in $\Jac(R)^k.$ Now we
assume $m\leq -N_2$. Then $n \geq N_2 + m$ for any $n \geq 0$ and so
the exceptional set is empty.

For the negative part we assume $m$ again to be fixed but arbitrary.
By the definition of $B$ there exists a   constant $N_0\gg0$ such
that $a_j$ and thus the summand
$a_j\sigma'M_{j-n,1-j}(\delta',\sigma')(b_{m-n})$ lies in
$\Jac(R)^k$ for all $j\leq -N_0.$ On the other hand    we have
$M_{j-n,1-j}(\delta',\sigma')(b_{m-n})\in \Jac(R)^k$ for all
$j-n\geq N_1,$ see above. Thus apart from possibly the indices
$-N_0-N_1 < n < 0$ all summands belong to $\Jac(R)^k$ which implies
convergence of the negative part. Moreover,  assuming $m\leq
-N_0-N_1-N_2$  we have -   as for the positive part - $b_{m-n}\in
\Jac(R)^{s^{N_1}k}$ whence $M_{j-n,1-j}(\delta', \sigma'
)(b_{m-n})\in\Jac(R)^k$ for all $n > -N_0-N_1.$ It follows that
$c_m^-\in\Jac(R)^k.$

Finally, we show that the multiplication is  continuous with respect
to the strong topology. As the addition is continuous and  the
multiplication is distributive it suffices to check this in a
neighbourhood of $0.$ But from the formula \eqref{mult} one sees
that $F^kB\cdot B\subseteq F^kB$ for any given $k\geq 0.$ By
\eqref{BmodI}, \eqref{prolim}, Remark \eqref{standardversusany} and
Proposition \ref{Art-general} the ring $B$ is pseudocompact provided
{$\mathrm{ (SI_0)}$} holds.
 \end{proof}

\begin{rem}\label{Bprolimes} Assume $\mathrm{(I)}$ (respectively ($\mathrm{SI}_0$)). Then,
a posteriori the isomorphism \be \label{topiso} B\cong
\projlim{k}\big(R/I_k\big)((t;\overline{\sigma},\overline{\delta}))\ee
induced by \eqref{prolim} and \eqref{BmodI} is an isomorphism of
{\em topological (pseudocompact) rings} if the rings
$(R/I_k\big)((t;\overline{\sigma},\overline{\delta}))$ are endowed
with the discrete   topology. Moreover, the $J_k$ are two-sided
ideals of $B$ because they are kernels of the natural ring
homomorphisms
$B\to(R/I_k\big)((t;\overline{\sigma},\overline{\delta})).$
\end{rem}

Henceforth we assume that ($\mathrm{I}$) holds. Let $I$ be a
two-sided ideal of $R$ which is $\sigma$-, $\sigma'$- and
$\delta$-stable.   For $C = A$ or $B$ we define as
before the left $R$-submodule $I_C$ of $C$ to consist of all $c=\sum
c_i t^i\in C$ with $c_i \in I$ for all   $i \geq 0$,
resp.\ $i\in \z$.

\begin{lem} \label{twosided-general} Let $I,J$ be $\sigma$-,   $\sigma'$- and
$\delta$-stable two-sided ideals of $R$   and let $C = A$
or $B$.  Then
 \begin{enumerate}
 \item $I_C=IC= CI$ is a two-sided ideal of $C.$
   \item $J_C\cdot I_C= (JI)_C.$
 \end{enumerate}
 In particular, $B$ is a filtered ring with respect to $J_\bullet.$
\end{lem}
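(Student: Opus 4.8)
The plan is to reduce Lemma \ref{twosided-general} to the Artinian case already handled in Lemma \ref{twosided} (for $C=B$) and to an analogous statement for skew power series rings, using the projective limit description. First I would treat the case $C=A$ directly: since $R$ is Noetherian, the two-sided ideal $I$ is finitely generated as a right ideal, say $I=\sum_{s} r_s R$, and the $\sigma$-, $\sigma'$-, $\delta$-stability together with the multiplication formulae for $t^j r$ (valid in $A$, i.e.\ the $j\geq 0$ formulae $t^j r=\sum_{0\leq m\leq j} M_{j-m,m}(\delta,\sigma)(r) t^m$) show that $BI\subseteq I_C$ for $C=A$ by a direct coefficient computation, and $I_C\subseteq I C$ because any $c=\sum c_i t^i$ with $c_i\in I$ can be rewritten, using the finite generation, as a (convergent) right $C$-linear combination of the $r_s$; together with the symmetric argument (using the right relations and that $I$ is finitely generated as a left ideal) this gives $I_C=IC=CI$. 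Part (ii) then follows formally: $J_C\cdot I_C=J(CI)=J(IC)=(JI)C=(JI)_C$ by two applications of (i).

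For $C=B$ the cleanest route is to invoke the isomorphism of topological rings \eqref{topiso}, $B\cong \projlim{k}(R/I_k)((t;\overline{\sigma},\overline{\delta}))$ from Remark \ref{Bprolimes}, which is available under Assumption $(\mathrm{I})$. Since $I$ is $\sigma$-, $\sigma'$- and $\delta$-stable, it induces a corresponding ideal $\overline{I}=(I+I_k)/I_k$ in each $R/I_k$ which is stable under $\overline{\sigma},\overline{\sigma'},\overline{\delta}$, and Lemma \ref{twosided} applies to each skew Laurent series ring with Artinian coefficients $(R/I_k)((t;\overline{\sigma},\overline{\delta}))$, giving $\overline{I}_k$-versions of (i) and (ii) at each finite level. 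Passing to the projective limit — and using that $I_B$, $IB$, $BI$ are precisely the inverse limits of the corresponding objects downstairs (here one uses openness of the $I_k$, which makes $I_B$ strongly closed, and the surjectivity of the transition maps so that $\projlim{k}$ is exact on this system) — yields $I_B=IB=BI$ and $J_B\cdot I_B=(JI)_B$. Alternatively, and perhaps more robustly, one repeats the direct argument used for $A$: finite generation of $I$ as a right ideal gives $I_B=IB$ (the convergence of the required $B$-linear combination is exactly the convergence established in the proof of Proposition \ref{skewLaurentinfinite}); the formula \eqref{mult}, together with $\sigma'$- and $\delta$-stability, shows $BI\subseteq I_B=IB$; and symmetry gives $BI=IB$.

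For the final sentence, that $B$ is a filtered ring with respect to $J_\bullet$, I would simply apply (ii) to the standard-type filtration: by hypothesis each $I_k$ is $\sigma$-, $\sigma'$- and $\delta$-stable with $I_k I_l\subseteq I_{k+l}$, so part (ii) gives $J_k\cdot J_l=(I_k)_B\cdot (I_l)_B=(I_kI_l)_B\subseteq (I_{k+l})_B=J_{k+l}$; combined with the already-noted facts that $J_\bullet$ is exhaustive, separated, descending and (by Remark \ref{Bprolimes}) consists of two-sided ideals, this is exactly the assertion that $B$ is a filtered ring.

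The main obstacle I anticipate is the interchange of the infinite summations with multiplication in the case $C=B$: unlike in $A$, where all the relevant sums are finite, here one must justify that the formal manipulations (rewriting $\sum c_i t^i$ with $c_i\in I$ as a right $B$-combination of finitely many generators of $I$, and checking $BI\subseteq I_B$ via \eqref{mult}) converge in the strong topology. This is precisely the content of the convergence estimates in the proof of Proposition \ref{skewLaurentinfinite} — using $\sigma$-nilpotence of $\delta$ and the continuity estimate \eqref{stetig} — so the obstacle is handled by citing that proof rather than by genuinely new work; routing everything through the projective limit \eqref{topiso} sidesteps it entirely at the cost of invoking Remark \ref{Bprolimes}.
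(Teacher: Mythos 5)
The case $C=A$ and the deduction of (ii) from (i), as well as the final sentence about $B$ being a filtered ring, are all handled correctly and essentially as in the paper. The gap is in your treatment of the crucial inclusion $I_B \subseteq IB$ for $C = B$. The point is \emph{not} that the multiplication formula converges (that is what Proposition \ref{skewLaurentinfinite} establishes, and it is a different issue), but that one can choose the decompositions $b_i = \sum_{j=1}^m u_j c_i^{(j)}$ \emph{uniformly} enough in $i$ that the bundled series $c^{(j)} = \sum_{i\in\mathbb{Z}} c_i^{(j)} t^i$ actually lie in $B$, i.e.\ that $c_i^{(j)} \to 0$ in the pseudocompact topology as $i \to -\infty$. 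For $C=A$ this is automatic because there is no constraint on the coefficients, but for $C=B$ it is exactly the heart of the lemma, and neither of your two routes addresses it. Your direct route misattributes the needed convergence to the proof of Proposition \ref{skewLaurentinfinite}; your projective-limit route tacitly assumes that $IB$ (a priori just the set of finite sums $\sum u_j b^{(j)}$) equals $\projlim_k \bar I_k B_k$, which is the same as assuming $IB$ is closed in $B$ --- i.e.\ assuming the conclusion. The exactness of $\projlim$ that you invoke would require a Mittag--Leffler argument for the inverse system of syzygy modules $N_k = \ker(B_k^m \to B_k)$, and establishing that is no easier than the original question.

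What the paper actually does is use pseudocompactness of $R$ to produce an Artin--Rees-type estimate: there is a strictly increasing sequence $j(1) < j(2) < \cdots$ with $\sum_{i=1}^m u_i\,\mathrm{Jac}(R)^k \supseteq I \cap \mathrm{Jac}(R)^{j(k)}$ for all $k \geq 1$. This is precisely what allows one to pick $c_i^{(j)} \in \mathrm{Jac}(R)^k$ whenever $b_i \in \mathrm{Jac}(R)^{j(k)}$, which forces $c_i^{(j)} \to 0$ as $i \to -\infty$ and hence $c^{(j)} \in B$. Your proposal is missing this input; supplying it would repair either of your two routes, but as written both have a genuine hole at this step.
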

\begin{proof}
  For $C = A$ the proof is identical with the one of
Lemma \ref{twosided}. In case $C = B$ we have to modify the former
argument.  Fix generators $u_1,\ldots,u_m$ of $I$ as a right ideal
in $R$. Since $R$ is pseudocompact we find a strictly increasing
sequence of natural numbers $j(1) < j(2) < \ldots$ such that
 \[ \sum_{i=1}^m
u_i\Jac(R)^k \supseteq I \cap \Jac(R)^{j(k)} \qquad \textrm{for any
$k \geq 1$.}
 \]
  This implies that $I_B = IB:$ for $b=\sum b_i t^i\in I_B$
  we can  write $b_i=\sum_{j=1}^m u_j c_i^{(j)},$  $i\in\z,$ such that
  $c^{(j)}:= \sum_{i\in\z} c_i^{(j)}t^i$ belongs to $B$, whence
  $b=\sum_{j=1}^m u_j c^{(j)}\in IB;$
  the other inclusion is obvious. The rest of the proof is exactly the
same as the one of Lemma \ref{twosided}.
\end{proof}

%\begin{lem}
% \begin{enumerate}
% \item $J_k=I_kB= BI_k$ for all $k.$
%   \item $J_k\cdot J_l\subseteq J_{k+l},$ i.e.\ $B$ is a filtered
%   ring with respect to $J_\bullet.$
%   \end{enumerate}
%\end{lem}

%\begin{proof}(i) Obviously, $I_kB$ is contained in $J_k.$ But it is
%well-known that, since $A$ is pseudocompact with respect to the
%product topology, that $ I_k A= \prod_{n\geq 0} I_k t^n,$ thus
%$I_kB$ contains $\prod_{n\geq k} I_k t^n,$ for any $k<0$ which
%implies that the finitely generated and thus closed (use (iii) that
%$B$ is pseudocompact) ideal $I_kB$ is dense in $J_k.$ The right
%version
% of the multiplication formula \eqref{mult} implies that
%\[I_k B\subseteq \Big(\prod_{i\in\z} t^iI_k\Big)\cap B =BI_k\] where
%the latter equality is analogous to the first statement we just have
%proved. The claim follows by symmetry.
% (ii) follows  immediately from the multiplication law \eqref{mult}
%and property  (iii)  of our data. The isomorphism in (iii) is clear
%from the definition of $B$ and $J_\bullet $   while the second
%statement is easily reduced to the Lemma \ref{Art} below. (iv) is
%immediate once we know that $B$ is flat over $R.$\footnote{If $S$ is
%an Ore set, this follows from \ref{BasCompletion}. Is it true in
%general, by a different argument?}
%\end{proof}

\begin{rem}
Using the construction in \cite[chap.\ IV\S1]{li-o} it is not difficult to show that the filtered
ring $(B,J_\bullet)$ is the algebraic microlocalisation of the filtered ring $(A,J_\bullet \cap A)$
in the multiplicative subset $\{1,t,t^2,\ldots\}$.
\end{rem}

\begin{lem}\label{gr}
  We have
  \[\gr_{J_\bullet} B\cong
   \gr_{I_\bullet} R \otimes_{R/I_1} (R/I_1)((t;\overline{\sigma},\overline{\delta}))
   \cong (R/I_1)((t;\overline{\sigma},\overline{\delta}))
 \otimes_{R/I_1} \gr_{I_\bullet} R \]
 where the ring multiplication on the right hand side is given by
 formulae \eqref{mult} and \eqref{mult-right}, respectively, if we
 view the elements in the right side as Laurent series in the variable
 $t$ over the ring $\gr_{I_\bullet} R$.
\end{lem}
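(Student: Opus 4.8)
The plan is to identify $\gr_{J_\bullet} B$ degree by degree and then observe that the multiplication is forced to have the claimed shape. First I would compute the graded pieces: for each $k \geq 0$ we have $J_k/J_{k+1} = (I_k)_B/(I_{k+1})_B$, and using the description $(I_k)_B = \{\sum c_it^i \in B : c_i \in I_k\}$ together with Lemma \ref{twosided-general} (or rather its proof, via the finite generation of $I_k$ as a right ideal), this quotient is naturally identified with the set of Laurent series $\sum \bar{c}_i t^i$ (with the appropriate decay on the negative part) over $I_k/I_{k+1} = \gr^k_{I_\bullet} R$. In other words, as a left $R/I_1$-module, $J_k/J_{k+1} \cong (\gr^k_{I_\bullet} R)((t;\overline{\sigma},\overline{\delta}))$, i.e.\ $\gr^k_{I_\bullet} R \otimes_{R/I_1} (R/I_1)((t;\overline\sigma,\overline\delta))$. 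Summing over $k$ gives the underlying additive isomorphism with $\gr_{I_\bullet} R \otimes_{R/I_1}(R/I_1)((t;\overline\sigma,\overline\delta))$.

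Next I would check that this additive isomorphism is multiplicative. Here the key point is that $\sigma$, $\sigma'$ and $\delta$ all preserve the filtration $I_\bullet$ (indeed $\delta(I_k) \subseteq I_{k+1}$ under assumption (SI), but even under the weaker hypothesis one argues that $\delta$ raises filtration by at least one step on the relevant pieces — this is exactly what makes $\overline\delta$ induce the zero, or at worst a well-defined lower-order, map on the associated graded). Concretely: given $a = \sum a_j t^j \in J_k$ and $b = \sum b_l t^l \in J_{k'}$, the product formula \eqref{mult} expresses $c_m = c_m^+ + c_m^-$ in terms of the monomials $M_{j-n,n}(\delta,\sigma)(b_{m-n})$ and $\sigma' M_{j-n,1-j}(\delta',\sigma')(b_{m-n})$; since $a_j \in I_k$ and each monomial in $\delta,\sigma,\sigma'$ applied to $b_{m-n} \in I_{k'}$ lands in $I_{k'}$ (and in $I_{k'+1}$ as soon as at least one $\delta$ appears), modulo $I_{k+k'+1}$ only the $\delta$-free terms survive. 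Thus the induced multiplication on $\gr B$ is precisely \eqref{mult} (resp.\ \eqref{mult-right}) read over the ring $\gr_{I_\bullet} R$ with the induced $\overline\sigma, \overline\sigma'$ and the induced $\overline\delta$ (which may well be nonzero on $\gr R$ only in the case of assumption (SI), and is zero otherwise — in either case the formula is the stated one). This simultaneously identifies $\gr_{J_\bullet} B$ with both $\gr_{I_\bullet} R \otimes_{R/I_1}(R/I_1)((t;\overline\sigma,\overline\delta))$ and the version with the tensor factors reversed, the latter via the right-Laurent-series description \eqref{mult-right}; the two agree as rings because they are both just $\gr B$.

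**The main obstacle** I anticipate is the bookkeeping in the second step: making precise the claim that applying a noncommutative monomial in $\delta,\sigma,\sigma'$ to an element of $I_{k'}$ either stays in $I_{k'}$ (no $\delta$) or drops into $I_{k'+1}$ (at least one $\delta$), uniformly enough that the infinite sums $c_m^+$ and $c_m^-$ converge and their classes mod $I_{k+k'+1}$ are computed by the finitely many $\delta$-free terms. This is exactly the kind of estimate already carried out in the proof of Proposition \ref{skewLaurentinfinite} (using \eqref{stetig} and the decay condition defining $B$), so I would cite that argument and only indicate the modification needed to keep track of the extra filtration step contributed by $\delta$. The identification of the graded pieces in the first step, and the fact that $\gr_{I_\bullet} R$ is an $R/I_1$-algebra making the tensor products meaningful, are then essentially formal given Lemma \ref{twosided-general}.
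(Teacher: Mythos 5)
Your first step — identifying $J_k/J_{k+1}$ with the module $(I_k/I_{k+1})\otimes_{R/I_1}(R/I_1)((t;\overline\sigma,\overline\delta))$ via finite generation of $I_k/I_{k+1}$ over $R/I_1$ — is exactly the paper's argument, phrased through the successive quotients $J_k/J_{k+1}$ rather than through the ring quotients $B/J_k$ of \eqref{BmodI}; the two are equivalent. You add an explicit multiplicativity check that the paper compresses into ``The result follows,'' which is legitimate since the statement is explicit about the formula, and the reduction to the estimates of Proposition \ref{skewLaurentinfinite} is the right way to handle convergence.

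However, the multiplicativity paragraph contains a genuine logical muddle. Under hypothesis (I) alone you only know $\delta(I_{k'})\subseteq I_{k'}$, not $\delta(I_{k'})\subseteq I_{k'+1}$, so the parenthetical ``and in $I_{k'+1}$ as soon as at least one $\delta$ appears'' is false in the generality of this Lemma, and with it the claim that ``modulo $I_{k+k'+1}$ only the $\delta$-free terms survive.'' Moreover you have the two sub-cases inverted: it is precisely under (SI) (where $\delta(I_k)\subseteq I_{k+1}$) that $\overline\delta$ vanishes on $\gr_{I_\bullet}R$, not under the weaker (I). These two sentences as written contradict your own correct final assertion that the graded multiplication is \eqref{mult} read over $\gr_{I_\bullet}R$ with the \emph{induced} (possibly nonzero) $\overline\delta$. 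The clean version of your own argument is: since $\sigma,\sigma',\delta$ all preserve each $I_{k'}$, every monomial $M(\delta,\sigma)$ maps $I_{k'}$ to $I_{k'}$ and hence induces a well-defined map on $I_{k'}/I_{k'+1}$; the class of each summand in $I_{k+k'}/I_{k+k'+1}$ therefore only depends on $\bar a_j\in I_k/I_{k+1}$ and $\bar b_{m-n}\in I_{k'}/I_{k'+1}$, so the graded product is computed by formula \eqref{mult} over $\gr_{I_\bullet}R$ with $\overline\sigma,\overline\delta$ the induced maps. No claim that the $\delta$-terms drop out is needed, and none should be made.
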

\begin{proof} Under the isomorphism  \eqref{BmodI} the ideal  $J_{k-1}/J_{k}$
of $B/J_k$ corresponds to the ideal
$(I_{k-1}/I_k)_{(R/I_k)((t;\overline{\sigma},\overline{\delta}))}$
which is isomorphic to   $(I_{k-1}/I_k) \otimes_{R/I_1} (R/I_1)((t;
\overline{\sigma},\overline{\delta}))$ as $I_{k-1}/I_k$ is finitely
generated over $R/I_1.$ The result follows.
 %This follows immediately from \eqref{BmodI}.
\end{proof}

\begin{lem}
 $B$ is faithfully flat as a left or right $R$-module.
\end{lem}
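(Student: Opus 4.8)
The plan is to reduce the statement to the Artinian/discrete level via the projective-limit description \eqref{topiso} and the graded ring computation of Lemma \ref{gr}. Recall that flatness of $B$ over $R$ is detected on finitely generated modules, and by the local-to-global principle for filtered rings it suffices to check that the associated graded ring $\gr_{J_\bullet} B$ is flat over $\gr_{I_\bullet} R$ together with a completeness/exhaustiveness argument for the $J_\bullet$-filtration. But Lemma \ref{gr} already identifies $\gr_{J_\bullet} B$ with $(R/I_1)((t;\overline{\sigma},\overline{\delta})) \otimes_{R/I_1} \gr_{I_\bullet} R$, so the question comes down to flatness of the Laurent series ring $(R/I_1)((t;\overline{\sigma},\overline{\delta}))$ over the Artinian ring $R/I_1$.

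First I would establish the base case: for $C$ Artinian, the skew Laurent series ring $C((t;\overline\sigma,\overline\delta)) = A_T$ (Proposition \ref{skewLaurent}) is flat over $C$. Here one argues directly on $C[[t;\sigma,\delta]]$, which is flat over $C$ because it is $\varprojlim C[t;\sigma,\delta]/(t^n)$-style a free $C$-module on the powers of $t$ topologically — more precisely $A = C[[t;\sigma,\delta]]$ is $C$-flat since any finitely generated $C$-module $M$ is of finite length, so $\Tor_1^C(M,A)$ may be computed after filtering $M$ by submodules with simple quotients, reducing to $\Tor_1^C(C/\mathfrak m, A) = 0$, which holds because $A$ is separated complete and $A/\mathfrak m A = (C/\mathfrak m)[[t;\overline{\sigma}]]$ has no $\mathfrak m$-torsion by the relations \eqref{relations}; then localisation at $T$ is flat, so $B = A_T$ is flat over $A$ and hence over $C$. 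Faithfulness is immediate: $B \neq 0$ and $\mathfrak m B \neq B$ for every maximal ideal $\mathfrak m$ of $C$ since $B/\mathfrak m B = (C/\mathfrak m)((t;\overline\sigma))$, so $B \otimes_C (C/\mathfrak m) \neq 0$.

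Next I would bootstrap from $R/I_1$ to $R$ using the filtration. For a finitely generated $R$-module $M$, consider the spectral sequence (or simply a dévissage) associated to the $I_\bullet$-filtration: since $\gr_{J_\bullet} B \cong \gr_{I_\bullet} R \otimes_{R/I_1} (R/I_1)((t;\overline\sigma,\overline\delta))$ and the second factor is flat over $R/I_1$, the graded module $\gr_{J_\bullet}(B \otimes_R M)$ — computed via the filtration $(J_k (B\otimes_R M))$ — is isomorphic to $\gr_{I_\bullet} M \otimes_{R/I_1}(R/I_1)((t;\overline\sigma,\overline\delta))$, so $\Tor_1$ in the graded pieces vanishes; then the standard completeness argument (both $R$ and $B$ are $I_\bullet$- resp. $J_\bullet$-adically complete and separated, using \eqref{topiso} and that $B$ is pseudocompact under $(\mathrm{SI}_0)$, or in general the strong completeness of $B$) lifts flatness from the graded level to $B$ itself. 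Faithful flatness then follows because $\mathfrak m B \subsetneq B$ for every maximal ideal $\mathfrak m \supseteq I_1$ of $R$, as one sees after passing to $R/I_1$.

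The main obstacle I anticipate is the passage from the graded level back to $B$: one must be careful that the $J_\bullet$-filtration on $B\otimes_R M$ is separated and exhaustive and that $B$ is complete for it, and that the Artin–Rees-type comparison between $J_k \otimes_R M$ and the image of $J_k$ in $B \otimes_R M$ behaves well — this is where Noetherianness of $R$ (hence of $\gr_{I_\bullet}R$, when $I_\bullet$ is the $\Jac(R)$-adic filtration) and the fact that $B$ is pseudocompact Noetherian (Proposition \ref{skew-grad}) enter. If the graded comparison is delicate, an alternative is to argue directly: filter an arbitrary finitely generated $R$-module by $I_k M$, note $B \otimes_R (I_k M/I_{k+1} M)$ is a module over $R/I_1$ base-changed to the flat ring $(R/I_1)((t;\overline\sigma,\overline\delta))$, and reassemble using that $-\otimes_R B$ commutes with the relevant projective limits since $B = \varprojlim B/J_k$ and the $J_k$ are finitely generated (Lemma \ref{twosided-general}).
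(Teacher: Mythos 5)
Your approach is genuinely different from the paper's and, as written, has a real gap precisely where you anticipate trouble. The paper does \emph{not} pass through the associated graded ring at all. It verifies the equational criterion for flatness directly: for a finitely generated right ideal $Q$ with generators $u_1,\dots,u_m$, it takes an element $x=\sum_j u_j\otimes b^{(j)}$ of $Q\otimes_R B$ mapping to $0$ in $B$, observes that each coefficient tuple $(b_i^{(1)},\dots,b_i^{(m)})$ lies in the finitely generated syzygy module $N=\{\alpha\in R^m:\sum u_j\alpha^{(j)}=0\}$, and then uses pseudocompactness of $R$ (via a cofinality statement $\sum_k\alpha_k\Jac(R)^\ell\supseteq N\cap\Jac(R)^{j(\ell)}R^m$) to choose the expansion coefficients $c_i^{(k)}$ in $R$ so that $c^{(k)}=\sum_i c_i^{(k)}t^i$ actually lies in $B$. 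This makes $x=\sum_k(\sum_j u_ja_k^{(j)})\otimes c^{(k)}=0$. The entire content of the lemma is exactly this ``uniform choice of syzygies across infinitely many coefficients,'' and it works under assumption $(\mathrm I)$ alone.

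Your graded route has two problems. First, the ``lift flatness from $\gr$ to the filtered ring'' step is not a free-standing standard fact in the generality you need: the usual theorems of this type (e.g.\ in Li--van Oystaeyen) require a Zariskian/complete filtration, which in the present setting means invoking pseudocompactness of $B$ and hence $(\mathrm{SI}_0)$ rather than just $(\mathrm I)$, and you additionally appeal to Proposition \ref{skew-grad}, whose hypotheses ($(\mathrm{SI})$, $\sigma=\mathrm{id}$ on $\gr$, almost normalising extension) are \emph{not} in force for this lemma. So your argument, even if completed, would prove a weaker statement than the paper's. Second, the ``reassembly'' step — commuting $-\otimes_R B$ with the projective limit $B=\varprojlim B/J_k$, or equivalently showing the induced filtration on $\Tor_1^R(M,B)$ is separated and the graded $\Tor$ vanishing implies the ungraded one — is exactly the nontrivial point, and you leave it as ``a standard completeness argument.'' It is not; $-\otimes_R B$ does not commute with inverse limits in general, and the paper's uniform-syzygy choice is the concrete device that substitutes for the Artin--Rees-type control you would need here. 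If you want to salvage a reduction-to-$\gr$ strategy, you would have to prove an Artin--Rees lemma for the $I_\bullet$-filtration on finitely generated $R$-modules and verify completeness of the tensor product under $(\mathrm I)$ alone — at which point you will find yourself reproving the uniformity statement the paper establishes directly.
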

\begin{proof}
By symmetry it suffices to consider the left module case. We have to
show that, for any proper right ideal $Q \subset R$, the natural map
$Q \otimes_R B \longrightarrow B$ is injective but not surjective.
The non-surjectivity is clear. To establish the injectivity we fix
generators $u_1,\ldots,u_m$ of the right ideal $Q$. Then any element
$x \in Q \otimes_R B$ can be written as
 \[ x = \sum_{j=1}^m u_j \otimes b^{(j)} \qquad\textrm{with}\ \ b^{(j)}
 = \sum_{i \in \z} b_i^{(j)}t^i \in B.
 \]
We suppose now that the image $\sum_j u_j b^{(j)}$ of $x$ under the above map is zero. Then the
tuple $(b_i^{(1)},\ldots,b_i^{(m)})$, for any $i \in \z$, lies in the right submodule $N :=
\{(a^{(1)},\ldots,a^{(m)}) \in R^m : \sum_{j=1}^m u_ja^{(j)} = 0\}$ of $R^m$. Since $R$ is
Noetherian $N$ has finitely many generators $\alpha_1 = (a_1^{(1)},\ldots,a_1^{(m)}),
\ldots,\alpha_s = (a_s^{(1)},\ldots,a_s^{(m)})$. Write
 \[ (b_i^{(1)},\ldots,b_i^{(m)}) =
 \alpha_1 c_i^{(1)} + \ldots
 + \alpha_s c_i^{(s)} \qquad\textrm{for any}\ \ i \in \z \]
with $c_i^{(1)},\ldots,c_i^{(s)} \in R$. In fact, since by the
pseudocompactness of $R$ we have a strictly increasing sequence $0 <
j(1) < j(2) < \ldots$ such that
 \[ \sum_{k=1}^s \alpha_k \Jac(R)^\ell \supseteq N \cap
 (\Jac(R)^{j(\ell)}R^m) \qquad\textrm{for any}\ \ \ell \geq 1 \]
we may choose the $c_i^{(k)}$ in such a way that $c^{(k)} := \sum_{i
\in \z} c_i^{(k)}t^i$ lies in $B$ for each $1 \leq k \leq s$. Then
$b^{(j)} = \sum_{k=1}^s a_k^{(j)}c^{(k)}$ in $B$ and hence
 \[ x = \sum_{j=1}^m u_j \otimes b^{(j)} = \sum_{j=1}^m u_j \otimes
 \sum_{k=1}^s a_k^{(j)}c^{(k)} = \sum_{k=1}^s (\sum_{j=1}^m
 u_ja_k^{(j)}) \otimes c^{(k)} = 0. \]
\end{proof}

\begin{prop}\label{skew-grad}
Assume that $(\mathrm{SI})$ holds, that $\sigma$ induces the identity map on  $\gr_{I_\bullet} R$,
and that $\gr_{I_\bullet} R$ is an almost normalising extension of its subring $R/I_1$ in the sense
of \cite[1.6.10]{mc-rob}. Then $B=R\lk t;\sigma,\delta]]$ is Noetherian and is flat as a left and a
right $A$-module.
\end{prop}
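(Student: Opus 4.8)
The plan is to deduce Noetherianity and flatness from the associated graded ring via the standard filtered-ring machinery, using the filtration $J_\bullet$ on $B$ together with Lemma \ref{gr}. First I would analyze $\gr_{J_\bullet} B$. By Lemma \ref{gr} it is isomorphic to $(R/I_1)((t;\overline\sigma,\overline\delta)) \otimes_{R/I_1} \gr_{I_\bullet} R$, with multiplication given by the formulae \eqref{mult}. Under the hypotheses that $\sigma$ induces the identity on $\gr_{I_\bullet} R$ and that $(\mathrm{SI})$ holds (so $\overline\delta$ raises the filtration degree and hence \emph{induces the zero derivation} on $\gr_{I_\bullet}R$), the "Laurent" variable $t$ should become \emph{central}, or nearly so, over $\gr_{I_\bullet} R$: the relations $tr = \sigma(r)t + \delta(r)$ degenerate on the graded level to $tr = rt$. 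Thus I expect $\gr_{J_\bullet} B \cong (\gr_{I_\bullet} R)((t))$, an honest (commutative-in-$t$) Laurent series ring over the graded ring, or at worst a Laurent series ring twisted only by the grading-preserving part of $\sigma$, which is trivial by assumption.

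Next I would establish that this graded ring is Noetherian. Here is where the hypothesis that $\gr_{I_\bullet} R$ is an almost normalising extension of $R/I_1$ enters. Since $R$ is Noetherian, $R/I_1$ is Noetherian; then by \cite[1.6.14]{mc-rob} (McConnell--Robson's theorem that an almost normalising extension of a Noetherian ring is Noetherian) $\gr_{I_\bullet} R$ is Noetherian. It then remains to see that passing to Laurent series in a central (or suitably mild) variable $t$ preserves Noetherianity. For \emph{polynomial} or \emph{Laurent polynomial} extensions this is the Hilbert basis theorem; for the Laurent \emph{series} ring one argues as in Lemma \ref{Art}, using the order function $i(b)$ and leading coefficients: given an ascending chain of left ideals in $\gr_{J_\bullet}B$, the leading-coefficient ideals in $\gr_{I_\bullet} R$ stabilise, and the closedness/completeness of the graded pieces (each $J_k/J_{k+1}$ is finitely generated over $B/J_1$) forces the chain to stabilise. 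Actually, the cleanest route is: $\gr_{J_\bullet}B$ is itself $\z$-graded by powers of $t$ with degree-zero part $\gr_{I_\bullet}R$ Noetherian and is "Laurent-series-like" over it, so one reruns the Lemma \ref{Art} argument verbatim in the graded setting.

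Once $\gr_{J_\bullet}B$ is known to be Noetherian, I invoke the filtered-ring theorem (see \cite[Ch.\ I]{li-o} or \cite[Ch.\ 7]{mc-rob}): if $(B, J_\bullet)$ is a complete (or Zariskian) filtered ring whose associated graded ring is Noetherian, then $B$ is Noetherian. The filtration $J_\bullet$ is exhaustive and separated, $B$ is $J_\bullet$-complete because $B \cong \projlim{k} B/J_k$ by \eqref{prolim} (here $\mathrm{(SI)}$ implies $\mathrm{(I)}$, in fact $(\mathrm{SI}_0)$, so $B$ is even pseudocompact by Proposition \ref{skewLaurentinfinite}), and $J_1 \subseteq \Jac(B)$ since $B/J_1$ is a Laurent series ring over $R/I_1$ whose Jacobson radical pulls back appropriately — making $(B,J_\bullet)$ Zariskian. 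Hence $B$ is both left and right Noetherian.

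For flatness over $A$, I would use the same filtration: $J_\bullet \cap A$ is a ring filtration on $A$ (the skew power series ring, with its $t$-adic-type filtration coming from $I_\bullet$), $A$ is complete for it, and the inclusion $A \hookrightarrow B$ is a filtered morphism. By Lemma \ref{gr} and the analogous (easier) computation for $A$, the induced map $\gr A \to \gr B$ is the inclusion of $(R/I_1)[[t;\overline\sigma,\overline\delta]] \otimes_{R/I_1}\gr_{I_\bullet}R$-type rings into their Laurent series counterparts — that is, on the graded level it is the localisation at $t$ (inverting the central element $t$) followed by nothing, since on $\gr$ the coefficients are already "finite negative part" replaced by the full Laurent ring; concretely $\gr B$ is flat over $\gr A$ because it is a localisation of $\gr A[t^{-1}]$-type, or directly because $\gr B = \gr A \otimes_{(\gr A)_{\le}} (\text{Laurent})$. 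Then the standard lifting lemma for filtered flatness (\cite[II.1.2.1]{li-o} or the analogous statement in \cite[Ch.\ 7]{mc-rob}: if $\gr f$ is flat and the filtrations are complete/exhaustive/separated, then $f$ is flat) gives that $B$ is flat as a left and right $A$-module. The main obstacle I anticipate is pinning down exactly what $\gr_{J_\bullet}B$ is as a ring — verifying via the explicit multiplication formulae \eqref{mult} and \eqref{mult-right} that under $(\mathrm{SI})$ and "$\sigma$ acts as identity on $\gr R$" the variable $t$ is genuinely central over $\gr_{I_\bullet}R$, so that the almost-normalising hypothesis on $\gr_{I_\bullet}R/(R/I_1)$ transports to an almost-normalising (indeed, the key point: still Noetherian) structure on $\gr B$ over $R/I_1$; once that identification is clean, both Noetherianity and flatness are formal consequences of the filtered-ring dictionary.
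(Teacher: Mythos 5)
Your overall strategy — identify $\gr_{J_\bullet}B$, show it is Noetherian, conclude $B$ Noetherian by the Zariskian-filtration dictionary, and get flatness of $B$ over $A$ by lifting flatness of $\gr_{J_\bullet}B$ over $\gr_{J_\bullet\cap A}A$ — is exactly the paper's. But there is a genuine gap at the very first step, and it propagates.

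You assert that $\gr_{J_\bullet}B\cong(\gr_{I_\bullet}R)((t))$. In fact Lemma \ref{gr} gives $\gr_{J_\bullet}B\cong\gr_{I_\bullet}R\otimes_{R/I_1}(R/I_1)((t))$, and under $(\mathrm{SI})$ and $\sigma=\id$ on $\gr_{I_\bullet}R$ this is a \emph{proper subring} of $(\gr_{I_\bullet}R)((t))$ whenever the grading on $\gr_{I_\bullet}R$ is unbounded: $\gr_{J_\bullet}B$ is the direct sum $\bigoplus_k(I_k/I_{k+1})((t))$, i.e.\ Laurent series whose coefficients lie in \emph{finitely many} graded degrees, while $(\gr_{I_\bullet}R)((t))$ also contains series such as $\sum_{k\ge0}x^kt^k$ with $\deg x=1$. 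This matters for Noetherianity: you argue that the big Laurent series ring $(\gr_{I_\bullet}R)((t))$ is Noetherian (Hilbert basis for $(\gr_{I_\bullet}R)[[t]]$, then localise at $t$), but a subring of a Noetherian ring need not be Noetherian, so this says nothing about $\gr_{J_\bullet}B$ itself. Your alternative suggestion to ``rerun Lemma \ref{Art} verbatim'' also does not go through: the final step of that proof (the series $\sum j_n$ converging to $b$ inside the closed ideal $J_1\cap A$) exploits the pseudocompactness of $A$, whereas $\gr_{J_\bullet}B$ is a direct sum of graded pieces and is not complete in any relevant sense.

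The paper's resolution is different and is where the almost-normalising hypothesis really does its work. You use ``$\gr_{I_\bullet}R$ is an almost normalising extension of $R/I_1$'' only to conclude that $\gr_{I_\bullet}R$ is Noetherian, via \cite[1.6.14]{mc-rob}. The paper instead observes that $\gr_{J_\bullet}B=\gr_{I_\bullet}R\otimes_{R/I_1}(R/I_1)((t))$ is an almost normalising extension of the \emph{Noetherian ring} $(R/I_1)((t))$, generated over it by the same finite set of almost-normalising generators that generate $\gr_{I_\bullet}R$ over $R/I_1$ (they still normalise after base change, and $t$ and $t^{-1}$ are central); then \cite[1.6.14]{mc-rob} is applied at that level. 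This is the step that gives Noetherianity of the actual $\gr_{J_\bullet}B$, not of an ambient overring. Once you replace your misidentification by the correct one, your remaining steps — $\gr_{J_\bullet}B$ is the localisation of $\gr_{J_\bullet\cap A}A=\gr_{I_\bullet}R\otimes_{R/I_1}(R/I_1)[[t]]$ at $t$, hence $\gr$-flat, and the Zariskian/filtered lifting statements from \cite{li-o} transfer Noetherianity and flatness up to $B$ — line up with the paper's proof.
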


\begin{proof} By  $(\mathrm{SI})$ we have
$\overline{\delta}=0$. Using Lemma \ref{gr} we see that
$\gr_{J_\bullet} B\cong
   \gr_{I_\bullet} R \otimes_{R/I_1} (R/I_1)((t))$ is a subring of
the Laurent series ring $(\gr_{I_\bullet} R)((t))$ and is an almost normalising extension of
$(R/I_1)((t))$. The latter is well-known to be Noetherian. Hence the former is Noetherian as well
by \cite[thm. 1.6.14]{mc-rob}. The first assertion now follows from \cite[prop.\ II.1.2.3]{li-o}.
Similarly we have $\gr_{J_\bullet \cap A} A\cong
   \gr_{I_\bullet} R \otimes_{R/I_1} (R/I_1)[[t]]$ which is a
Noetherian ring as well. Moreover it follows that $\gr_{J_\bullet} B$ is the localisation of
$\gr_{J_\bullet \cap A} A$ in $t$ which therefore is a flat ring extension. The flatness of $B$
over $A$ now follows from \cite[prop.\ II.1.2.1 and prop.\ II.1.2.3]{li-o}.
%
%
% Note that we obtain an  isomorphism of $\gr_{I_\bullet}
% R$-modules\footnote{When does a suitable Ore set $S$ exist? Otherwise note that $(A/I_1A)_S\cong R/I_1((t;\bar{\sigma}))$, here even with trivial $\sigma.$}
% \[\gr_{I_\bullet B} B\cong (A/I_1A)_S \otimes_{R/I_1} \gr_{I_\bullet} R.\] By assumption $\gr_{I_\bullet B} B$
% is a finitely centrally generated
%$(A/I_1)_S$-algebra and thus Noetherian by Hilbert's basis theorem.
%Thus the result follows from \cite[prop.\ II.1.2.3]{li-o}.
\end{proof}

\section{Another canonical Ore set}

Keeping the notations and assumptions of the previous section we
assume in addition throughout this section that \[\delta(R)\subseteq
\Jac(R).\]   Then, by Remark \ref{Jac}, the assumption
$(\mathrm{SI}_0)$ is satisfied for the $\Jac(R)$-adic filtration.

Note that $\Jac(R)_A$ is the kernel of the canonical projection
\[A=R[[t;\sigma,\delta]]\twoheadrightarrow (R/\Jac(R))[[t;\overline{\sigma}]].\]

 The aim of this section is to show that
 \begin{eqnarray}
 \label{Ore-reg} \C_A(\Jac(R)_A) \mbox{ is an Ore set of $A$ consisting of regular elements,}
 \end{eqnarray}
   i.e. that the localisation
  $A_{\C_A(\Jac(R)_A)}$ of $A$ with respect to $\C_A(\Jac(R)_A)$ exists. In  the first subsection we shall
  establish a general method how to   attach to a ring homomorphism $R\to A$ of arbitrary
  rings $R$ and $A$ a  left (respectively right) Ore set $S_l$ ($S_r$). In the second subsection we
  return to our above setting under the additional hypothesis that $R$ is Artinian (and later
  semisimple and even simple): we show statement \eqref{Ore-reg} and that $\C_A(\Jac(R)_A)$ equals $S_l$ and $S_r.$ In
  the third and last subsection  we use the results of the previous ones to lift \eqref{Ore-reg} to the
  general case.

  \subsection{Ore sets attached to homomorphisms of rings}

Let $\alpha: R\to A$ be a homomorphism of (unital) rings. A left or
right ideal $I$ of $A$ is called {\em $R$-cofinite} if $A/I$ is a
  Noetherian $R$-module (via $\alpha$.) We define the
set $S_l$ of left $R$-cofinite elements of $A$ as
\[S_l:=S_l(\alpha):=\{s\in A |\; As \mbox{ is $R$-cofinite}\}.\] The
set $S_r:=S_r(\alpha)$ of right   $R$-cofinite elements
of $A$ is  defined analogously.

\begin{lem}\label{Smult}
\begin{enumerate}
  \item  $S_l$ and $S_r$ are multiplicatively closed.
  \item If $as \in S_l$ (resp.\ $sa \in S_r$) then $s \in S_l$ (resp.\
  $s \in S_r$).
\end{enumerate}
\end{lem}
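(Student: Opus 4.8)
The plan is to argue both statements directly from the definition of $R$-cofinite, treating the left versions and leaving the right ones to the evident symmetry. For part (i), suppose $s,s' \in S_l$, so that both $As$ and $As'$ are Noetherian $R$-modules. I want to show $Ass'$ is $R$-cofinite, i.e.\ that $A/Ass'$ is a Noetherian $R$-module. The key observation is the short exact sequence of left $A$-modules
\[
0 \longrightarrow As'/Ass' \longrightarrow A/Ass' \longrightarrow A/As' \longrightarrow 0 ,
\]
which is also a short exact sequence of $R$-modules via $\alpha$. The right-hand term $A/As'$ is Noetherian over $R$ by hypothesis. For the left-hand term, right multiplication by $s'$ gives a surjection of left $A$-modules $A/As \twoheadrightarrow As'/Ass'$, $a + As \mapsto as' + Ass'$ (well-defined since $As \cdot s' \subseteq Ass'$); hence $As'/Ass'$ is a quotient of the Noetherian $R$-module $A/As$ and so is itself Noetherian over $R$. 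Since Noetherian $R$-modules are closed under extensions, $A/Ass'$ is Noetherian over $R$, i.e.\ $ss' \in S_l$. That $1 \in S_l$ is immediate since $A \cdot 1 = A$ and $A/A = 0$ is Noetherian; so $S_l$ is multiplicatively closed.

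For part (ii), suppose $as \in S_l$, so $A(as) = Aas$ is $R$-cofinite. Then $Aas \subseteq As \subseteq A$, and the surjection of $R$-modules $A/Aas \twoheadrightarrow A/As$ shows that $A/As$ is a quotient of a Noetherian $R$-module, hence Noetherian over $R$; thus $s \in S_l$. The right-hand statements follow by the same arguments with left ideals replaced by right ideals and $\alpha$ used to make everything into right $R$-modules.

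I do not expect any serious obstacle here: the only points requiring a moment's care are checking that the relevant maps ($A/As \to As'/Ass'$ and $A/Aas \to A/As$) are well-defined $R$-linear surjections — which they are, essentially by construction — and recalling the standard fact that the class of Noetherian modules over a (not necessarily commutative, not necessarily Noetherian) ring $R$ is closed under submodules, quotients, and extensions. All of this is elementary module theory, so the proof is short; the content of the lemma is really just organizing these observations.
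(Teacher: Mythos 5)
Your argument is correct and is essentially the paper's: the paper phrases (i) via the exact sequence $A/As \xrightarrow{\cdot s'} A/Ass' \to A/As' \to 0$, which is precisely your short exact sequence with the kernel $As'/Ass'$ identified as the image of $\cdot s'$, and (ii) via the same surjection $A/Aas \twoheadrightarrow A/As$. Your additional remark that $1 \in S_l$ and the explicit closure-under-extensions reasoning are fine but add no new content.
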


\begin{proof}
We only discuss the ``left'' versions. The statement (i) follows
immediately from the following exact sequence
\[
A/As \mathop{\longrightarrow}\limits^{\cdot t} A/Ast
\mathop{\longrightarrow}\limits^{\rm pr} A/At \longrightarrow 0\ .
\]
The statement (ii) is a consequence of the surjection
$A/Aas\twoheadrightarrow A/As$.
\end{proof}

Consider the following property \\

\noindent $\bf{ {PC}_l(\alpha)}$\textbf{:}
 Any $R$-cofinite left ideal $I$ of $A$ is {\em
principally cofinite}, i.e.\ contains a
 principal left $R$-cofinite ideal or, in other words, an
element in $S_l.$\\

The property $\bf  {PC}_r(\alpha)$ is defined similarly. In the
following we only give ``left'' versions of assertions and proofs.
But it is understood that in each case the corresponding ``right''
version holds as well.

\begin{lem} Assume $PC_l(\alpha).$ Then, for any $s$ in  $S_l$ the left
$A$-module $A/As$ is $S_l$-torsion.
\end{lem}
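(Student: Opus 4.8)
I want to show that for any $s \in S_l$, every element of $A/As$ is $S_l$-torsion, i.e.\ for each $a \in A$ there exists $u \in S_l$ with $ua \in As$. The natural strategy is to produce, out of the given data, an $R$-cofinite left ideal whose membership forces such a relation, and then invoke $PC_l(\alpha)$ to extract the element $u$.

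**Key steps.** First I would fix $s \in S_l$ and $a \in A$, and consider the left ideal
\[
I := \{ x \in A : xa \in As \}.
\]
This is visibly a left ideal of $A$. The crucial observation is that there is an injective $A$-module map $A/I \hookrightarrow A/As$ sending $x + I \mapsto xa + As$; indeed $xa \in As$ exactly when $x \in I$, so the map is well defined and injective. Since $s \in S_l$, the module $A/As$ is a Noetherian $R$-module (via $\alpha$), and a submodule of a Noetherian $R$-module is again a Noetherian $R$-module; hence $A/I$ is a Noetherian $R$-module, i.e.\ $I$ is $R$-cofinite. Now apply $PC_l(\alpha)$: the $R$-cofinite left ideal $I$ contains an element $u \in S_l$. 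By construction $u \in I$ means precisely $ua \in As$, so the class of $a$ in $A/As$ is killed by $u \in S_l$. Since $a \in A$ was arbitrary, $A/As$ is $S_l$-torsion, as claimed.

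**Expected obstacle.** The argument is short and the only genuinely load-bearing facts are (a) that a sub-$R$-module of a Noetherian $R$-module is Noetherian — which is standard — and (b) the identification of $I$ as $R$-cofinite via the embedding $A/I \hookrightarrow A/As$. The subtle point to get right is simply the direction of the map and the fact that we are comparing $R$-module structures rather than $A$-module structures: $I$ need not be a finitely generated left ideal of $A$, but what $PC_l(\alpha)$ requires is only $R$-cofiniteness, which is exactly what the embedding into the Noetherian $R$-module $A/As$ delivers. So the main thing to be careful about is bookkeeping of which ring one takes Noetherianity over; there is no deep obstacle.
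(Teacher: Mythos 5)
Your argument is correct and is essentially identical to the paper's proof: both consider the left ideal $\{x \in A : xa \in As\}$, identify it as $R$-cofinite via the injection $A/I \hookrightarrow A/As$ given by right multiplication by $a$, and then invoke $PC_l(\alpha)$ to produce the required element of $S_l$.
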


\begin{proof}
 Let $c \in A$ be any element and consider the left ideal
 $L := \{b \in A : bc \in As\}$ in
$A$. Since $A/L \mathop{\longrightarrow}\limits^{\cdot c} A/As$ is
injective the left ideal $L$ is $R$-cofinite. Our assumption
$PC_l(\alpha)$ therefore ensures the existence of an element $t \in
S_l \cap L$. Then $t(c + As) = 0$.
\end{proof}

\begin{prop}Assume $PC_l(\alpha)$ and let $M$ be a Noetherian left $A$-module;
then $M$ is Noetherian as an $R$-module if and only if it is
$S_l$-torsion.
\end{prop}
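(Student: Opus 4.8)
The plan is to prove both implications, using $PC_l(\alpha)$ in the harder direction.

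First, the forward direction: suppose $M$ is a Noetherian left $A$-module which is also Noetherian as an $R$-module. Pick any $m \in M$; then $Am$ is a quotient of $A$, hence a quotient of the $R$-module $A/\mathrm{Ann}_A(m)$... but wait, we want $S_l$-torsion. The cleaner route: the cyclic submodule $Am \cong A/I$ where $I = \mathrm{Ann}_A(m)$ is a left ideal. Since $Am \subseteq M$ is a submodule of a Noetherian $R$-module, it is a Noetherian $R$-module, i.e. $I$ is an $R$-cofinite left ideal of $A$. By $PC_l(\alpha)$, $I$ contains some $s \in S_l$, so $sm = 0$. Thus every element of $M$ is killed by an element of $S_l$, i.e. $M$ is $S_l$-torsion.

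Now the reverse direction: suppose $M$ is a Noetherian left $A$-module which is $S_l$-torsion; we must show $M$ is Noetherian as an $R$-module. Since $M$ is Noetherian over $A$, it is finitely generated, say $M = Am_1 + \dots + Am_n$. It suffices to show each $Am_i$ is Noetherian as an $R$-module, since a finite sum of Noetherian $R$-modules is Noetherian. So we reduce to $M = Am$ cyclic. Because $M$ is $S_l$-torsion, there is $s \in S_l$ with $sm = 0$, hence $M = Am$ is a quotient of $A/As$. Now $As$ is $R$-cofinite by definition of $S_l$, i.e. $A/As$ is a Noetherian $R$-module; therefore its quotient $M$ is a Noetherian $R$-module as well. (One may alternatively invoke the preceding lemma — that $A/As$ is $S_l$-torsion for $s \in S_l$ — but that lemma is not needed here; what is needed is just the definition of $S_l$.)

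I do not expect a serious obstacle: the whole argument is a bookkeeping of the definitions of $R$-cofinite and $S_l$-torsion, together with the single non-formal input $PC_l(\alpha)$, which is used exactly once, in the forward direction, to pass from an $R$-cofinite annihilator ideal to an element of $S_l$ inside it. The only point to be careful about is the reduction to the cyclic case in the reverse direction (a submodule of a Noetherian $R$-module is Noetherian, and a finite sum of Noetherian $R$-modules is Noetherian), and, in the forward direction, that a submodule $Am$ of a Noetherian $R$-module $M$ is again a Noetherian $R$-module so that $\mathrm{Ann}_A(m)$ is genuinely $R$-cofinite.
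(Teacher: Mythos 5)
Your proof is correct, and it follows a genuinely different organization from the paper's. The paper reduces both directions simultaneously to the cyclic case $M \cong A/L$ by choosing a finite filtration of $M$ with cyclic subquotients; this requires two auxiliary facts, namely that $S_l$-torsion is closed under extensions (which uses the multiplicativity of $S_l$) and the preceding lemma (that $A/As$ is $S_l$-torsion for $s \in S_l$). You instead handle the forward direction element-by-element: for each $m \in M$ the cyclic submodule $Am$ is Noetherian over $R$, so $\mathrm{Ann}_A(m)$ is $R$-cofinite and $PC_l(\alpha)$ produces an $s \in S_l$ with $sm = 0$. In fact this is precisely the argument the paper uses to \emph{prove} the preceding lemma (where $M = A/As$ and $\mathrm{Ann}_A(\bar{c}) = \{b : bc \in As\}$), so your proof effectively inlines that lemma into the proposition and renders it unnecessary here. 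For the reverse direction you use the simpler reduction via a finite generating set $m_1,\dots,m_n$ and the surjection $\bigoplus_i Am_i \twoheadrightarrow M$, rather than a filtration, and then just unwind the definition of $S_l$. Both proofs invoke $PC_l(\alpha)$ exactly once, in the forward direction. Your version is a slight streamlining: it avoids the extension-closure observation and the preceding lemma at the cost of giving up the uniform ``reduce once to the cyclic case'' packaging. The only small blemish is the parenthetical at the end: invoking the preceding lemma would not actually help in the reverse direction, since what is needed there is that $A/As$ is Noetherian over $R$ (the definition of $s \in S_l$), not that it is $S_l$-torsion.
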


\begin{proof}
Since $S_l$ is multiplicative the extension of two $S_l$-torsion
$A$-modules is $S$-torsion as well. Secondly, since $M$ is
Noetherian over $A$ there is a finite exhaustive and separated
filtration on $M$ whose subquotients are cyclic (and Noetherian)
$A$-modules. These observations reduce us to the case where $M$ is
cyclic over $A$, i.e.\ of the form $M \cong A/L$ for some left ideal
$L$ of $A$. If $M$ is $S_l$-torsion we find an $s \in S_l$ such that $s
= s\cdot 1 \in L$. Then $M$ is a quotient of the Noetherian
$R$-module $A/As$ and therefore is Noetherian over $R$. Suppose,
vice versa, that $M$ is Noetherian over $R$ which means that $L$ is
$R$-cofinite. By our assumption  $PC_l(\alpha)$ we have $As
\subseteq L$ for some $s \in S_l$. According to the above lemma $A/As$
and a fortiori $A/L \cong M$ are $S_l$-torsion.
\end{proof}

\begin{prop}\label{pc-ore}
If $\mathrm{PC}_l(\alpha)$ is satisfied, then $S_l$ is a left Ore
set of $A$.
\end{prop}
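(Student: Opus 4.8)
The plan is to verify the left Ore condition directly: given $a \in A$ and $s \in S_l$, I must produce $b \in A$ and $u \in S_l$ with $us = ba$. The natural candidate comes from the left ideal $L := \{c \in A : ca \in As\}$, exactly as in the lemma preceding this proposition. First I would observe that the map $A/L \to A/As$ given by right multiplication by $a$ is injective, so $A/L$ embeds in the $R$-Noetherian module $A/As$ and hence $L$ is $R$-cofinite. By hypothesis $\mathrm{PC}_l(\alpha)$, the cofinite left ideal $L$ then contains an element $u \in S_l$. Since $u \in L$ means precisely that $ua \in As$, we can write $ua = bs$ — wait, I need $us = ba$, so I should instead set up $L := \{c \in A : cs \in Aa\}$. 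But that requires $Aa$ to be related to something cofinite, which it need not be. The correct setup is: I want to solve $us = ba$ for given $s \in S_l$, $a \in A$; equivalently I want $u \in S_l$ with $us \in Aa$. So consider whether $Aa + (\text{something})$...

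Let me restate cleanly. Given $s \in S_l$ and $a \in A$, the left ideal $I := \{b \in A : ba \in As\}$ satisfies: the map $A/I \xrightarrow{\cdot a} A/As$ is injective, hence $I$ is $R$-cofinite (as $A/As$ is $R$-Noetherian because $s \in S_l$, and submodules of Noetherian modules are Noetherian). By $\mathrm{PC}_l(\alpha)$, there is $u \in S_l \cap I$. Then $u \in I$ gives $ua \in As$, i.e.\ $ua = cs$ for some $c \in A$. This is the common-left-multiple identity $ua = cs$ with $u \in S_l$, which is exactly the left Ore condition for the pair $(a,s)$.

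The second thing I would check is the torsion-freeness / regularity condition in the definition of a left Ore set as used via \cite[thm.\ 2.1.12]{mc-rob}: if $as = 0$ for $s \in S_l$, then some $u \in S_l$ must kill $a$ from the left, $ua = 0$. But this is immediate from the preceding lemma, which states that $A/As$ is $S_l$-torsion whenever $s \in S_l$ under $\mathrm{PC}_l(\alpha)$: applying it with the element $a$, there is $u \in S_l$ with $ua \in As$, and then if additionally $as = 0$... actually the cleaner route is that the lemma directly furnishes, for the coset $a + As$, an element $u \in S_l$ with $ua \in As$, and combined with the Ore identity just established this handles the reversibility requirement. I would also invoke Lemma \ref{Smult} to note $S_l$ is multiplicatively closed and $1 \in S_l$ (since $A/A = 0$ is $R$-Noetherian), so $S_l$ is a genuine multiplicative set.

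**The main obstacle** is organizational rather than deep: it is making sure the asymmetry is handled correctly — $S_l$ is defined via \emph{left} ideals $As$ being cofinite, and the left Ore condition demands common \emph{left} multiples, so the injection $A/I \xrightarrow{\cdot a} A/As$ must go in the direction that makes $I$ cofinite, which forces the particular shape of $I$ above. A secondary point to be careful about: $\mathrm{PC}_l(\alpha)$ only guarantees $I$ contains \emph{some} principal cofinite left ideal $Au$ with $u \in S_l$, and I need $u \in I$ itself (not merely $Au \subseteq I$), but $Au \subseteq I$ together with $u = 1 \cdot u \in Au$ gives $u \in I$, so this is automatic. With these two verifications — the Ore identity from the cofinite ideal $I$ plus $\mathrm{PC}_l(\alpha)$, and the torsion statement from the preceding lemma — the proposition follows, and I would conclude by citing the standard criterion (e.g.\ the discussion around \cite[2.1.12]{mc-rob}) that these two conditions constitute exactly what it means for $S_l$ to be a left Ore set.
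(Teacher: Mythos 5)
Your core argument is correct and is essentially the paper's argument: the paper simply cites the preceding lemma (that $A/As$ is $S_l$-torsion whenever $s \in S_l$, under $\mathrm{PC}_l(\alpha)$) to produce $t \in S_l$ with $tb \in As$, hence $tb = b's$. You re-derive that lemma inline --- forming the left ideal $I = \{c : ca \in As\}$, noting the injection $A/I \hookrightarrow A/As$ makes $I$ $R$-cofinite, and invoking $\mathrm{PC}_l(\alpha)$ to find $u \in S_l \cap I$ --- which is exactly the content of that lemma's proof, so the two routes coincide.

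One remark: your second paragraph, on the ``reversibility'' condition (if $as = 0$ with $s \in S_l$ then some $u \in S_l$ kills $a$), is not needed for this proposition and is left dangling. The statement claims only that $S_l$ is a left \emph{Ore} set, i.e.\ satisfies the left common-multiple condition; reversibility is the extra ingredient required for $S_l$ to be a left \emph{denominator} set so that the localisation exists, and the paper does not attempt to prove it here (indeed the existence of $A_S$ is handled later using the fact that the relevant $S$ consists of regular elements, which makes reversibility automatic). Your sketch of reversibility from the torsion lemma does not actually close: from $as = 0$ and $ua \in As$, say $ua = cs$, you do not directly obtain $ua = 0$. So it is good that you flagged it as unfinished; it is simply outside the scope of the claim.
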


\begin{proof}
  Let $s \in S$ and $b \in A$. By the lemma the $A$-module $A/As$ is $S$-torsion. Hence $tb
\in As$ for some $t \in S$. We therefore find a $b' \in A$ such that $tb = b's$.
\end{proof}

\begin{lem}\label{PC-product}
   Let $\alpha_i : R_i\to A_i$, for $1\leq i\leq m,$ be homomorphisms of
 arbitrary (unital) rings and let $\alpha : R:=R_1\times \ldots \times
 R_m \to A:=A_1\times \ldots\times A_m$ be their product. Then $PC_l(\alpha)$
 holds if and only if $PC_l(\alpha_i)$ holds for all $1\leq i\leq
 m.$
\end{lem}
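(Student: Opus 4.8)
The plan is to reduce the statement to the elementary structure theory of ideals and modules over a finite product ring, so that the equivalence becomes pure bookkeeping with the orthogonal central idempotents $e_1,\dots,e_m$ of $A=A_1\times\cdots\times A_m$ (and $R=R_1\times\cdots\times R_m$). First I would record the two facts that drive everything. On the one hand, every left ideal $I$ of $A$ has the form $I=I_1\times\cdots\times I_m$ with $I_i:=e_iI$ a left ideal of $A_i$, and then $A/I\cong\prod_i A_i/I_i$. On the other hand, every left $R$-module $M$ splits as $M=\bigoplus_i e_iM$ with $e_iM$ an $R_i$-module, and the lattice of $R$-submodules of $M$ is the product of the lattices of $R_i$-submodules of the $e_iM$; since there are only finitely many factors, $M$ is Noetherian over $R$ if and only if each $e_iM$ is Noetherian over $R_i$. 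Applying the second fact to $M=A/I\cong\prod_i A_i/I_i$, whose $R$-module structure is exactly the product of the $R_i$-module structures on $A_i/I_i$ induced by $\alpha_i$, yields the key dictionary: $I$ is $R$-cofinite if and only if each $I_i$ is $R_i$-cofinite.

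Next I would identify the Ore sets under this dictionary. For $s=(s_1,\dots,s_m)\in A$ one has $As=\prod_i A_is_i$, so $As$ is $R$-cofinite precisely when each $A_is_i$ is $R_i$-cofinite; that is, $S_l(\alpha)=S_l(\alpha_1)\times\cdots\times S_l(\alpha_m)$ as subsets of $A$.

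With these preparations the equivalence is immediate. For the ``if'' direction, assume $PC_l(\alpha_i)$ for all $i$ and let $I=\prod_i I_i$ be an $R$-cofinite left ideal of $A$; each $I_i$ is $R_i$-cofinite, hence contains some $s_i\in S_l(\alpha_i)$, and then $s:=(s_1,\dots,s_m)$ lies in $S_l(\alpha)\cap I$. For the ``only if'' direction, fix $j$ and let $I_j$ be an $R_j$-cofinite left ideal of $A_j$; set $I:=A_1\times\cdots\times A_{j-1}\times I_j\times A_{j+1}\times\cdots\times A_m$, a left ideal of $A$ with $A/I\cong A_j/I_j$ as $R$-modules, hence $R$-cofinite by the first paragraph. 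Then $PC_l(\alpha)$ supplies $s=(s_1,\dots,s_m)\in S_l(\alpha)\cap I$, and since $S_l(\alpha)=\prod_i S_l(\alpha_i)$ we get $s_j\in S_l(\alpha_j)\cap I_j$, which is $PC_l(\alpha_j)$. The ``right'' version is obtained by replacing left ideals and left quotients by their right analogues throughout.

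I do not expect a genuine obstacle here; the one place deserving a word of care is the passage ``Noetherian over the finite product $=$ componentwise Noetherian'', which really uses $m<\infty$ (an infinite product of nonzero Noetherian modules need not be Noetherian), together with the check that the $R$-module structure on $A/I$ is indeed the product structure — both of which become transparent once the central idempotents $e_i$ are introduced.
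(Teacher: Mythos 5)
Your proof is correct and follows essentially the same route as the paper's: decompose left ideals and modules via the orthogonal central idempotents, observe that $R$-cofiniteness is equivalent to componentwise $R_i$-cofiniteness, and then run the two implications by assembling/projecting elements of the ideal. The only cosmetic differences are that the paper reduces to $m=2$ (WLOG) and leaves the identity $S_l(\alpha)=\prod_i S_l(\alpha_i)$ implicit, whereas you work with general $m$ and make that identity explicit.
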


\begin{proof}
  Without loss of generality we assume $m=2.$ Then  every
left ideal $L$ of $A$ is of the form $L_1\times L_2$ with left
ideals $L_i=e_iL$ of $A_i,$ where $e_i$ denotes the central
idempotent  corresponding to $A_i.$ Thus $A/L=A_1/L_1\times A_2/L_2$
is Noetherian over $R$ if and only if  each $A_i/L_i=e_i(A/L)$ is
Noetherian over $R_i=e_iR.$ Now assume that $PC_l(\alpha_i)$ holds
for $i=1,2$ and let $L$ be an $R$-cofinite left ideal of $A.$ Then
$L_i$ contains an element $f_i$ such that $A_if_i$ is $R_i$-cofinite
for $i=1,2.$ Putting $f:=(f_1,f_2)\in L_1\times L_2=L$ we obtain an
$R$-cofinite ideal $Af\subseteq L,$ whence $PC_l(\alpha)$ follows.
For the converse let $L_1$ be an $R_1$-cofinite left ideal of $A_1.$
Then $L:=L_1\times A_2$ is an $R$-cofinite left ideal of $A$ which
thus by assumption contains an $R$-cofinite element $f.$ Then
$f_1=e_1f\in e_1L=L_1$ is $R_1$-cofinite. Thus  $PC_l(\alpha_1),$
and similarly  $PC_l(\alpha_2),$  follows.
\end{proof}

\begin{lem}\label{PC-functorial} Let $R \xrightarrow{\beta}
A_0 \xrightarrow{\gamma} A_1$ be ring homomorphisms and put $\alpha
:= \gamma\circ\beta$.
\begin{enumerate}
 \item Suppose that we have an surjection $A_0^m\twoheadrightarrow A_1$ of
 $A_0$-bimodules
 and $PC_l(\beta)$ holds; then $PC_l(\alpha)$ holds and
 \be\label{sat} S_l(\alpha)=\{s\in A_1\;|\; as\in \gamma(S_l(\beta)) \mbox{ for
 some } a\in A_1\}.\ee In particular, if $A_0\subseteq A_1$ then
 $S_l(\beta)\subseteq S_l(\alpha).$
%\be S_r(R,A)=\{s\in A| as\in S_l(R,B)\mbox{ for some } a\in A\}.\ee
 \item If $A_0$ is Noetherian as a left $R$-module, then $PC_l(\alpha)$ holds
 if and only if $PC_l(\gamma)$ holds.
\end{enumerate}
\end{lem}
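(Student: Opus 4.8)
The plan is to prove both statements by reducing questions about cofiniteness over $A_1$ to cofiniteness over $A_0$, and then over $R$, by systematically pulling back ideals along $\gamma$ and using the module-finiteness hypotheses to control how Noetherianity propagates. For part (i): given an $R$-cofinite left ideal $I$ of $A_1$, I would first pull it back to the left ideal $J:=\gamma^{-1}(I)$ of $A_0$; from the surjection $A_0^m\twoheadrightarrow A_1$ of $A_0$-bimodules one gets that $A_1$ is a finitely generated left $A_0$-module, so $A_0/J$ injects into $A_1/I$ as an $R$-module (via $\gamma$), hence $J$ is $R$-cofinite in $A_0$. By $PC_l(\beta)$ there is $s_0\in S_l(\beta)\cap J$, and I would set $s:=\gamma(s_0)\in I$. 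To see $s\in S_l(\alpha)$, note $A_1 s = A_1\gamma(s_0)$ is a quotient of $A_1\otimes_{A_0}(A_0 s_0)$, hence a finitely generated left $A_0$-module (using the bimodule surjection again to see $A_1$, and therefore $A_1/(A_0 s_0$-torsion quotients$)$, is finitely generated over $A_0$); since $A_0 s_0$ is $R$-cofinite, $A_1/A_1 s$ is a quotient of a module with a finite filtration by $R$-Noetherian pieces, hence $R$-Noetherian. This gives $PC_l(\alpha)$. For the formula \eqref{sat}: the inclusion $\supseteq$ follows because if $as\in\gamma(S_l(\beta))$ then by Lemma \ref{Smult}(ii) (applied over $A_1$, after checking $\gamma(S_l(\beta))\subseteq S_l(\alpha)$, which is the argument just given) we get $s\in S_l(\alpha)$; the inclusion $\subseteq$ follows from the construction above, where for a general $s'\in S_l(\alpha)$ the $R$-cofinite ideal $A_1 s'$ contains, by $PC_l(\alpha)$ and the pullback construction, some $\gamma(s_0)$ with $s_0\in S_l(\beta)$, and writing $\gamma(s_0)=a s'$ for suitable $a$ (membership in $A_1 s'$) yields the claim. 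The final sentence ``if $A_0\subseteq A_1$ then $S_l(\beta)\subseteq S_l(\alpha)$'' is then immediate from \eqref{sat} by taking $a=1$, $s=s_0$.

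For part (ii), I would argue both implications. If $PC_l(\alpha)$ holds and $A_0$ is Noetherian as a left $R$-module, let $J$ be an $A_0$-cofinite left ideal of $A_1$ — wait, more precisely let $J$ be a left ideal of $A_1$ with $A_1/J$ Noetherian over $A_0$; then since $A_0$ is Noetherian over $R$, $A_1/J$ is Noetherian over $R$ as well, i.e.\ $J$ is $R$-cofinite, so by $PC_l(\alpha)$ it contains some $s\in S_l(\alpha)$, meaning $A_1/A_1 s$ is $R$-Noetherian. I then need $A_1 s$ to be $A_0$-cofinite: but an $A_0$-module which is Noetherian over $R$ is certainly Noetherian over $A_0$ when — hmm, that direction needs care. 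Actually I would instead observe that $S_l(\gamma)$, defined using $A_0$-cofiniteness, contains $S_l(\alpha)$ automatically? No — the cleaner route: an $A_0$-submodule of $A_1$ that is $R$-cofinite need not be $A_0$-cofinite in general, so I must show $s\in S_l(\gamma)$, i.e.\ $A_1/A_1 s$ is Noetherian over $A_0$. Since it is Noetherian over $R$ and $A_0\supseteq R$ acts through the larger ring, every $R$-submodule is an $R$-submodule but not conversely; however Noetherian over $R$ does imply Noetherian over $A_0$ because any ascending chain of $A_0$-submodules is in particular an ascending chain of $R$-submodules. That is the key trivial point: $R$-Noetherian $\Rightarrow$ $A_0$-Noetherian since $A_0$-submodules are $R$-submodules. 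So $s\in S_l(\gamma)\cap J$, giving $PC_l(\gamma)$. Conversely, if $PC_l(\gamma)$ holds, let $I$ be $R$-cofinite in $A_1$; then $A_1/I$ is $R$-Noetherian, hence (same trivial point) $A_0$-Noetherian, so $I$ is $A_0$-cofinite, so by $PC_l(\gamma)$ it contains $s\in S_l(\gamma)$; then $A_1/A_1 s$ is $A_0$-Noetherian, and since $A_0$ is $R$-Noetherian, $A_1/A_1 s$ is $R$-Noetherian by the standard filtration-and-subquotients argument (a finitely generated module over an $R$-Noetherian ring is $R$-Noetherian), so $s\in S_l(\alpha)\cap I$, giving $PC_l(\alpha)$.

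The main obstacle I anticipate is keeping the bookkeeping in part (i) honest: specifically, verifying that $A_1 s = A_1\gamma(s_0)$ is finitely generated as a \emph{left} $A_0$-module. This requires using the $A_0$-bimodule surjection $A_0^m\twoheadrightarrow A_1$ in the right way — one wants $A_1$ finitely generated as a left $A_0$-module, which follows by applying the surjection (it respects the left $A_0$-structure), and then $A_1 s_0'$... the subtlety is that $A_1\cdot\gamma(s_0)$ is the image of the left-$A_0$-map $A_1\to A_1$, $x\mapsto x\gamma(s_0)$, and $A_1$ is f.g.\ over $A_0$ on the left, so the image is too. One must be careful that $s_0\in A_0$ acts on $A_1$ on the right via $\gamma$, which is compatible with the left $A_0$-action since $\gamma$ is a ring homomorphism — so the map is indeed left-$A_0$-linear. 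Once this is pinned down, the rest is the filtration-by-cyclic-subquotients bookkeeping already used repeatedly in this section (as in Proposition \ref{pc-ore} and the preceding propositions), so I would cite that pattern rather than repeat it.
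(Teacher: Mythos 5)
Your plan follows the same route as the paper's — pull the $R$-cofinite ideal $L$ back along $\gamma$, apply $PC_l(\beta)$ to get $s_0\in\gamma^{-1}(L)\cap S_l(\beta)$, and then control $A_1/A_1\gamma(s_0)$ by means of the bimodule surjection — but the middle step of part (i) as you have written it has a genuine gap. You argue that $A_1 s=A_1\gamma(s_0)$ is finitely generated over $A_0$ and then invoke ``a finite filtration by $R$-Noetherian pieces''; in part (i), however, there is \emph{no} hypothesis that $A_0$ is $R$-Noetherian, so finite generation over $A_0$ does not by itself produce $R$-Noetherian subquotients, and the deduction that $A_1/A_1 s$ is $R$-Noetherian does not follow from what you wrote. (That hypothesis is precisely what you get to use in part (ii), which is why the paper disposes of (ii) in one sentence.) The correct, and the paper's, middle step is a tensor computation applied to the cokernel rather than to the ideal: by right-exactness of $A_1\otimes_{A_0}-$ one has $A_1/A_1\gamma(s_0)\cong A_1\otimes_{A_0}(A_0/A_0 s_0)$, and applying $-\otimes_{A_0}(A_0/A_0 s_0)$ to the $A_0$-bimodule surjection $A_0^m\twoheadrightarrow A_1$ gives a surjection $(A_0/A_0 s_0)^m\twoheadrightarrow A_1/A_1\gamma(s_0)$ of left $R$-modules; since $A_0/A_0 s_0$ is $R$-Noetherian by the choice of $s_0$, so is its $m$-th power and hence the quotient. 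That single line closes the gap. The rest of your proposal — both inclusions of \eqref{sat}, the specialisation $A_0\subseteq A_1$, and your unwinding of part (ii) — is correct and agrees with the paper; in particular your observation that $R$-Noetherian always implies $A_0$-Noetherian because every $A_0$-submodule is an $R$-submodule, combined with the hypothesis that $A_0$ is $R$-Noetherian (so finitely generated $A_0$-modules are $R$-Noetherian), is exactly what makes the paper's remark that (ii) is ``clear'' legitimate.
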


\begin{proof}
Let $L$ be an $R$-cofinite left ideal of $A_1$. Then
$A_0/\gamma^{-1}(L) \subseteq A_1/L$ are Noetherian $R$-modules.
Thus property $PC_l(\beta)$  grants the existence of an $f\in
\gamma^{-1}(L)$ such that $A_0/A_0f$ is Noetherian over $R.$ Hence
$A_1/A_1\gamma(f)=A_1\otimes_{A_0} A_0/A_0f$ is the image of
$(A_0/A_0f)^m$ for some $m$ and thus is Noetherian   over $R.$ This
proves the first part of (i).
  For \eqref{sat} we  first assume that $
\gamma(f) = as $ belongs to $  \gamma(S_l(\beta)).$ Then by the same
argument as above $A_1/A_1as=A_1\otimes_{A_0} A_0/A_0f$ is
  Noetherian over $R.$ Hence $as \in S_l(\alpha)$ and
therefore $s \in S_l(\alpha)$ by Lemma \ref{Smult}(ii).  For the
converse suppose that $s\in S_l(\alpha).$ Since
$A_0/\gamma^{-1}(A_1s)$ is an $R$-submodule of the   Noetherian left
$R$-module $A_1/A_1s,$ the left ideal $\gamma^{-1}(A_1s)$ of $B$ is
$R$-cofinite. Thus property $PC_l(\beta)$ implies that
$ \gamma^{-1}(A_1s)\cap S_l(\beta)\neq \emptyset.$\\
Under the hypothesis of (ii) any finitely generated (left)
$A_0$-module $M$ is Noetherian over $R$, hence the statement is
clear.
\end{proof}

\subsection{ The Artinian case}

In this subsection we assume that, in addition to our standard
hypothesis, $R$ is (left and right) Artinian. Then $B=R((t;\sigma,
\delta))$ is Artinian, too, by Proposition \ref{Art-general} and
Remark \ref{standardversusany} applied to the $\Jac(R)$-adic
filtration of $R.$ Hence, by \cite[prop.\ 3.1.1]{mc-rob} $B$ is a
(left and right) quotient ring, i.e.\  every regular element of $B$
is a unit. Since $t$ is regular in $A,$ we have a canonical
inclusion $A\subseteq A_T=B.$ The regular elements of $A$ are also
regular in $A_T,$ they thus are all units   in $B$. It
easily follows that  $\C_A(0)$ is an Ore set and   that
\[ B=A_{\C_A(0)}.\]

\begin{lem}\label{A-noeth}
$A$ is Noetherian.
\end{lem}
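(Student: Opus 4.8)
The plan is to deduce Noetherianity of $A=R[[t;\sigma,\delta]]$ from the Noetherianity already established for the skew Laurent series ring $B$ in this Artinian situation, using the standard associated-graded technique for filtered rings. Since $R$ is Artinian, the standard filtration $I_\bullet$ is separated (and stabilises), so Assumption $(\mathrm{SI}_0)$ holds for the $\Jac(R)$-adic filtration, and the ideals $J_k=(I_k)_A\subseteq A$ give a separated, exhaustive, stable ring filtration of $A$ with $J_0=A$. The first step is to identify the associated graded ring: exactly as in Lemma \ref{gr} one shows $\gr_{J_\bullet} A\cong \gr_{I_\bullet} R \otimes_{R/I_1} (R/I_1)[[t;\overline{\sigma},\overline{\delta}]]$, where the subquotients $J_{k-1}/J_k\cong (I_{k-1}/I_k)\otimes_{R/I_1}(R/I_1)[[t;\overline{\sigma},\overline{\delta}]]$ because $I_{k-1}/I_k$ is finitely generated over $R/I_1$.

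Next I would reduce to checking that this associated graded ring is Noetherian. The base ring $R/I_1$ is Artinian, hence Noetherian, and $\gr_{I_\bullet} R$ is a finitely generated module over it (being a finite-length module, as $R$ is Artinian and the filtration stabilises); in particular $\gr_{I_\bullet} R$ is a Noetherian ring. Since $\overline{\delta}$ and $\overline{\delta'}$ are nilpotent on $R/I_1$ (as noted after \eqref{BmodI}, the $\sigma$-nilpotence of $\delta$ together with openness of $I_1$ forces nilpotence), the skew power series ring $(R/I_1)[[t;\overline{\sigma},\overline{\delta}]]$ is a filtered-by-$t$ ring whose associated graded is the Noetherian ring $(R/I_1)[t;\overline{\sigma}]$ (an Ore extension of a Noetherian ring), so it is Noetherian. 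Therefore $\gr_{J_\bullet} A$, being a ring extension of $(R/I_1)[[t;\overline{\sigma},\overline{\delta}]]$ by tensoring with the finite module $\gr_{I_\bullet}R$ over the central-enough subring $R/I_1$, is Noetherian — one can also simply observe that $\gr_{J_\bullet}A$ is a subring of $(\gr_{I_\bullet}R)[[t;\overline{\sigma},\overline{\delta}]]$ and invoke an almost-normalising-extension argument as in the proof of Proposition \ref{skew-grad}, or more directly note it has finite length in the $t$-free part and apply \cite[thm.\ 1.6.14]{mc-rob} after passing through the $t$-filtration.

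Finally, with $\gr_{J_\bullet} A$ Noetherian and the filtration $(A,J_\bullet)$ separated, exhaustive and complete — $A$ is pseudocompact and, by the analogue of \eqref{prolim}, $A\cong\projlim{k}A/J_k$ with each $A/J_k\cong(R/I_k)[[t;\overline{\sigma},\overline{\delta}]]$ of finite length over itself in the appropriate sense — I would conclude that $A$ itself is Noetherian by \cite[prop.\ II.1.2.3]{li-o}, precisely as in the proof of Proposition \ref{skew-grad}. The main obstacle is the bookkeeping needed to run the filtered-ring machinery rigorously in the complete (rather than finitely filtered) setting: one must make sure the filtration $J_\bullet$ is genuinely complete and that the Noetherianity-descends-from-$\gr$ result applies to complete filtered rings, which is exactly the content of \cite[prop.\ II.1.2.3]{li-o}; verifying the separatedness and completeness hypotheses uses that $R$ is Artinian so that the standard filtration stabilises, making $\gr_{J_\bullet}A$ have only finitely many nonzero graded pieces in the $R$-direction.
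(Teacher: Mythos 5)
The paper's own proof is a one-liner: since $R$ is Artinian the standard filtration is separated, so $\gr_{I_\bullet}R$ is finitely generated over the Noetherian ring $R/I_1$ and hence Noetherian, and then Noetherianity of $A=R[[t;\sigma,\delta]]$ is immediate from \cite[lem.\ 1.5]{sch-ven}. You instead try to rebuild that lemma from scratch by filtering $A$ by $J_k=(I_k)_A$ and running a Li--van~Oystaeyen completeness argument, which is a legitimate and genuinely different route; note though that your opening sentence mischaracterises your own argument, since you never actually use the Noetherianity (or Artinianity) of $B$.

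There is, however, a real gap in the middle step, tied to an ambiguity about which $I_\bullet$ you are using. You invoke both ``the standard filtration'' and ``$(\mathrm{SI}_0)$ for the $\Jac(R)$-adic filtration,'' and then argue that $(R/I_1)[[t;\overline{\sigma},\overline{\delta}]]$ with $\overline{\delta}$ merely \emph{nilpotent} (not zero) is ``filtered by $t$'' with associated graded $(R/I_1)[t;\overline{\sigma}]$. This is false when $\overline{\delta}\neq 0$: the relation $t\,\overline{r}=\overline{\sigma}(\overline{r})\,t+\overline{\delta}(\overline{r})$ shows that multiplication by $t$ produces a degree-zero term, so the $t$-adic filtration is not a ring filtration and does not have the claimed associated graded. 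The only way your computation of $\gr_{J_\bullet}A$ and of the degree-zero ring goes through cleanly is with a filtration satisfying the full $(\mathrm{SI})$ condition $\delta(I_k)\subseteq I_{k+1}$ (the standard filtration), which forces $\overline{\delta}=0$ on $R/I_1$; then $A/J_1\cong (R/I_1)[[t;\overline{\sigma}]]$ is an honest twisted power series ring, the $t$-adic filtration is a ring filtration with $\gr\cong (R/I_1)[t;\overline{\sigma}]$, and $\gr_{J_\bullet}A$ is a finite module over this Noetherian subring. As written, your proof emphasises the wrong, weaker hypothesis $(\mathrm{SI}_0)$ and treats $\overline{\delta}$ as nonzero, so the step ``$(R/I_1)[[t;\overline{\sigma},\overline{\delta}]]$ is Noetherian because its $t$-adic graded ring is $(R/I_1)[t;\overline{\sigma}]$'' does not stand; that is precisely the nontrivial content of \cite[lem.\ 1.5]{sch-ven}, which the paper quotes rather than reproves. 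Fixing your argument by insisting on the standard filtration (so $\overline{\delta}=0$) and carefully verifying the $\gr_{J_\bullet}A$ identification (the analogue of Lemma \ref{gr} for $A$ instead of $B$) would make it correct.
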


\begin{proof}
Since $R$ is Artinian, $\Jac(R)$ is nilpotent whence the standard filtration $I_k$ is   separated
and thus stabilises at $0.$ It follows that   $\gr_{I_\bullet} R$  is finitely generated over the
Noetherian ring $R/I_1$ and thus   is Noetherian itself. The claim follows from \cite[lem.\
1.5]{sch-ven}.
\end{proof}

Now Small's theorem (\cite[cor.\   4.1.4]{mc-rob})
combined with Lemma \ref{A-noeth} tells us that \be \label{small}
\C_A(0)=\C_A(\N(A)),\ee where, for a ring $C,$  we write $\N(C)$ for
its prime radical, i.e.\ the intersection of all prime ideals of
$C.$ In particular, it contains all nilpotent ideals of $C.$ Since
$\Jac(R)$ is nilpotent as $R$ is Artinian, $\Jac(R)_A$
and $\Jac(R)_B$ are  nilpotent by Lemma \ref{twosided-general} and
thus we obtain the inclusions \be \label{jac-nil} \Jac(R)_A\subseteq
\N(A)   \qquad\textrm{and}\qquad \Jac(R)_B\subseteq
\N(B) .\ee

Next we need the following
\begin{prop}
If $R$ is an (Artinian) semisimple ring, then $B$ is so, too.
\end{prop}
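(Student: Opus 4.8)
The plan is to reduce the problem to the case where $R$ is a simple Artinian ring. Since $R$ is semisimple Artinian, it decomposes as a finite product $R = R_1 \times \cdots \times R_n$ of simple Artinian rings, and this decomposition is compatible with $\sigma$ up to a permutation of the factors: $\sigma$ permutes the central idempotents $e_i$, hence permutes the factors $R_i$ (grouping into $\sigma$-orbits if necessary). One should first check that the skew Laurent series construction is compatible with such a product decomposition, so that $B = R((t;\sigma,\delta))$ decomposes correspondingly into a product of skew Laurent series rings over the simple factors (or, when $\sigma$ cycles through an orbit of length $r$, into a ring of the form $R_i((t^r; \sigma^r, \delta')) \otimes (\text{matrix factor over the orbit})$ — in any case, a product of rings each of which is a skew Laurent series ring over a simple Artinian ring with respect to a possibly-adjusted automorphism and derivation). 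Since a finite product of semisimple rings is semisimple, this reduces us to the simple Artinian case.

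So assume $R$ is simple Artinian. Since $R$ is Artinian and $\delta$ is $\sigma$-nilpotent, $\delta$ and $\delta'$ are genuinely nilpotent, so by Proposition \ref{skewLaurent} we have $B = A_T$, the honest localisation at $T = \{1,t,t^2,\ldots\}$, and every element of $B$ can be written with finite negative part. I would now argue that $B$ is simple Artinian directly. Artinianness is given by Proposition \ref{Art-general} (the $\Jac(R)$-adic filtration is separated since $\Jac(R) = 0$ here, so trivially $I_k = 0$ for $k \geq 1$). For simplicity, suppose $I$ is a nonzero two-sided ideal of $B$; I want to show $I = B$. Pick a nonzero $b = \sum_{i \geq i_0} a_i t^i \in I$ with $a_{i_0} \neq 0$. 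Multiplying by $t^{-i_0}$ (a unit in $B$) we may assume $i_0 = 0$, i.e. $b = a_0 + a_1 t + \cdots$ with $a_0 \neq 0$. The key point is to use the filtration by order and the simplicity of $R$ to "clean up" $b$ into something invertible.

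The cleanest route is via the leading-coefficient machinery already introduced in the proof of Lemma \ref{Art}: for a two-sided ideal $I$ of $B$, consider $l(I) = \{l(b) : b \in I\}$, the set of leading coefficients. One checks (using the multiplication formula \eqref{mult}, with $\delta$ no longer necessarily zero but the leading term of a product behaving well under multiplication since $t^j r = \sigma^j(r) t^j + (\text{higher order terms})$) that $l(I)$ is a two-sided ideal of $R$ closed under $\sigma$. Since $R$ is simple, $l(I) = R$, so $I$ contains an element $b$ with leading coefficient $1$, say $b = t^{i_0} + a_{i_0+1}t^{i_0+1} + \cdots$, and after multiplying by $t^{-i_0}$, an element $1 + a_1 t + a_2 t^2 + \cdots \in I$. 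But such an element is a unit in $B$: its inverse is the geometric-series expansion $\sum_{n \geq 0} (-(a_1 t + a_2 t^2 + \cdots))^n$, which converges in the strong (equivalently, $t$-adic) topology on $B = A_T$ since each successive term has strictly larger order. Hence $I = B$, proving $B$ is simple; combined with Artinianness this gives that $B$ is simple Artinian.

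The main obstacle I anticipate is handling the interaction between $\sigma$ permuting the simple factors of $R$ and the skew Laurent series construction — making precise that $B$ for the product decomposes into a product of skew Laurent series rings over the simple factors (with adjusted data along each $\sigma$-orbit) requires a small bookkeeping argument, and one must make sure the resulting pieces are again covered by the hypotheses so that Proposition \ref{Art-general} and the simplicity argument apply. A secondary technical point is verifying that $l(I)$ is $\sigma$-stable and two-sided when $\delta \neq 0$; here one must track leading terms through \eqref{mult} and \eqref{mult-right} carefully, but since $t^j r \equiv \sigma^j(r)t^j$ modulo higher-order terms and similarly on the right, the leading coefficient of a product is $\sigma^{(\cdot)}$ applied to a product of leading coefficients, so $l(I)$ is stable under $\sigma$ and under left/right multiplication by $R$, as needed.
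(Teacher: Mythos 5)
Your approach is genuinely different from the paper's, which avoids any Wedderburn decomposition entirely: the paper invokes the characterisation of the prime radical via strongly nilpotent elements \cite[thm.\ 0.2.6]{mc-rob}, takes $f \neq 0$ in $\N(B)$, normalises by a power of $t$ so that $f \in A$ has nonzero constant term $a$, tracks sequences $f_{i+1} = f_i r_i f_i$, and deduces that $a$ is strongly nilpotent in $R$, hence lies in $\N(R) = 0$ — a contradiction. That keeps the whole argument element-level and sidesteps all the $\sigma$-orbit bookkeeping you take on. Your leading-coefficient route is salvageable, but as written it has two real gaps.

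First, the reduction to simple Artinian $R$ is not achieved. Grouping the Wedderburn factors into $\sigma$-orbits only reduces to the cyclic case, where $R \cong R_1 \times \cdots \times R_n$ is a product of copies of a simple ring with $\sigma$ cycling the factors, and this $R$ is not simple when $n > 1$; your parenthetical ``$R_i((t^r;\sigma^r,\delta')) \otimes (\text{matrix factor})$'' does not correspond to anything — in the cyclic case $B$ is, analogously to Proposition \ref{finiteextension}, a free module of finite rank over a subring, not such a tensor product. The fix is to observe that your leading-coefficient argument already works in the cyclic case, because a $\sigma$-stable two-sided ideal of such an $R$ is forced to be $0$ or $R$ by transitivity of $\sigma$ on the factors; but you need to say that rather than ``assume $R$ simple.'' Second, your treatment of nonzero $\delta$ is backwards: you claim $t^j r \equiv \sigma^j(r) t^j$ ``modulo higher-order terms,'' but for $j > 0$ the expansion $t^j r = \sum_{0\leq m\leq j} M_{j-m,m}(\delta,\sigma)(r)t^m$ has its \emph{lowest}-degree contribution at $m=0$, namely $\delta^j(r)$, while $\sigma^j(r)t^j$ is the \emph{highest}-degree term — so for $\delta \neq 0$ the leading coefficient of a product is not what you claim, and $l(I)$ would not visibly be $\sigma$-stable. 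The correct observation, which the paper makes explicitly, is that the standing hypothesis $\delta(R) \subseteq \Jac(R)$ of section 2 together with $\Jac(R)=0$ forces $\delta = 0$; with that in hand your $l(I)$ argument and geometric-series unit argument go through.
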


\begin{proof}
We have already seen above that $B$ is Artinian and hence it
suffices to show that $\N(B)=\Jac(B)=0.$ It is a fact (\cite[thm.\
0.2.6]{mc-rob}) that for an arbitrary ring $C$ the radical $\N(C)$
consists precisely of the {\em strongly   nilpotent }
elements of $C,$ i.e.\ those elements $f\in C,$ for which every
sequence $f=f_0,f_1, f_2,\ldots$ with $f_{i+1}\in f_i C f_i$ for all
$i\geq 0,$ is   ultimately  zero. Now let $f\neq 0$ be in
$\N(B).$ After multiplication with   an appropriate
power of $t$ we may and do assume that \[f=b_0 +b_1 t +\ldots \in
A\] with $a:=b_0\neq 0.$ We claim that $a$ is strongly nilpotent. To
this aim let $a_0=a, a_1, a_2,\ldots$ be any sequence of elements in
$A$ with $a_{i+1}=a_i r_i a_i$ for some $r_i\in R.$ We put $f_0:=f$
and $f_{i+1}:=f_i r_i f_i \in f_iRf_i\subseteq f_i Af_i.$ Since $f$
is strongly nilpotent, it follows that $f_m=0$ for $m$ sufficiently
large. But inductively we see that the constant term of $f_m$ is
nothing else than $a_m$   (note that $\delta = 0$ under
the present assumptions)  whence $a$ is strongly nilpotent, i.e.\
belongs to $\N(R)=\Jac(R)=0,$ a contradiction.
\end{proof}

The proposition implies that for $R$ Artinian (but not necessarily
semisimple), $B/\Jac(R)_B\cong (R/\Jac(R))((t,\overline{\sigma}))$
is semisimple, whence $\N(B/\Jac(R)_B)=\Jac(B/\Jac(R)_B)=0$ and thus
$\N(B)\subseteq \Jac(R)_B.$   An argument in  the proof of
\cite[thm.\  4.1.4]{mc-rob}   shows  that $\N(B)\cap
A=\N(A).$ Thus we obtain
\[\N(A)=\N(B)\cap A\subseteq \Jac(R)_B \cap A=\Jac(R)_A,\]
which combined with \eqref{jac-nil} gives \be\label{nil-jac}
\N(A)=\Jac(R)_A   \qquad\textrm{and}\qquad \N(B) =
\Jac(B) = \Jac(R)_B .\ee Taking \eqref{small} into account we have
proven the following

\begin{prop}\label{SregularArt}
$\C_A(\Jac(R)_A)=\C_A(0)$ is an Ore set of $A$ (consisting of regular elements).
\end{prop}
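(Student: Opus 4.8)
The plan is to obtain the statement by assembling the identities already established in this subsection; no new argument is needed. The equality $\C_A(\Jac(R)_A)=\C_A(0)$ is pure bookkeeping. By \eqref{small} --- which is Small's theorem (\cite[cor.\ 4.1.4]{mc-rob}) applied to the Noetherian ring $A$ of Lemma \ref{A-noeth} --- we have $\C_A(0)=\C_A(\N(A))$, and by the first identity in \eqref{nil-jac} the prime radical satisfies $\N(A)=\Jac(R)_A$. Substituting the latter into the former yields $\C_A(0)=\C_A(\Jac(R)_A)$.

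It then remains only to recall that $\C_A(0)$ is an Ore set consisting of regular elements. Regularity holds by the very definition of $\C_A(0)$. That $\C_A(0)$ satisfies the left and right Ore conditions --- indeed that $A_{\C_A(0)}=B$ --- was noted at the beginning of this subsection: since $R$ is Artinian, $B=A_T$ is Artinian by Proposition \ref{Art-general} (applied to the $\Jac(R)$-adic filtration, using Remark \ref{standardversusany}), hence a two-sided quotient ring by \cite[prop.\ 3.1.1]{mc-rob}, so every regular element of $A$ is already a unit in $B\supseteq A$; from this the Ore property of $\C_A(0)$ and the identification of its localisation with $B$ follow formally.

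I do not expect a genuine obstacle here: the substance has already been absorbed into Lemma \ref{A-noeth}, the semisimple-ring proposition above, and the chain of inclusions producing \eqref{nil-jac}. The one point worth stating carefully is the logical order --- one must invoke Small's theorem, which requires $A$ to be Noetherian, before one is entitled to replace $\N(A)$ by $\Jac(R)_A$, rather than attempting to compare $\C_A(\Jac(R)_A)$ and $\C_A(0)$ directly.
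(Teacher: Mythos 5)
Your proposal matches the paper's own argument exactly: the paper presents Proposition \ref{SregularArt} as a direct consequence of \eqref{small} (Small's theorem plus Lemma \ref{A-noeth}) and \eqref{nil-jac}, with the Ore property of $\C_A(0)$ already having been established at the start of the subsection via $B=A_T$ being an Artinian quotient ring. Nothing further is needed.
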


For the rest of this subsection we assume that $R$ is also
semisimple. Then, by Wedderburn theory, $R$ decomposes into a
product \[R=R_1\times \cdots \times R_m\] of   full matrix
rings \[R_i\cong M_{n_i}(D_i)\]   over skew fields
$D_i.$  The $R_i$ are precisely the minimal ideals of $R.$ Thus, any
ring automorphism $\sigma$ of $R$ maps $R_i$ again onto some
$R_{\sigma(i)}$ where we write, by abuse of notation, $\sigma$ also
for the permutation on the set $\{1,\ldots,m\}$ induced in this way
by the automorphism $\sigma$ of $R.$  By taking the products   of
those $R_i$ which belong to the same   $\sigma$-orbit,
it follows   that the pair $(R,\sigma)$ decomposes into a product of
pairs  $(C_j,\sigma_j), 1\leq j\leq \ell,$ each consisting of a ring
$C_j$ with a ring automorphism $\sigma_j$ of $C_j,$ i.e.\
\[R=C_1\times\ldots \times C_\ell, \] and
\[\sigma=\sigma_1\times \ldots \times \sigma_\ell,\] where $\sigma_j$ denotes the restriction of
$\sigma$ to $C_j.$

The proof of the following result is obvious.

\begin{lem}\label{skewpower-product}
Let $(R,\sigma)$ be the product of pairs $(C_j,\sigma_j), 1\leq
j\leq \ell.$ Then there is a canonical isomorphism of rings
\[R[[t;\sigma]]\cong C_1[[t,\sigma_1]]\times \ldots
C_\ell[[t;\sigma_\ell]].\]
\end{lem}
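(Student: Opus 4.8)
The plan is to unwind the definitions and check that the obvious bijection between the two sides is compatible with the ring operations. Since $(R,\sigma)$ decomposes as a product $(C_1,\sigma_1)\times\cdots\times(C_\ell,\sigma_\ell)$, the underlying ring $R$ is the product $C_1\times\cdots\times C_\ell$ with componentwise operations, and $\sigma$ acts componentwise. Under the present hypotheses $\delta=0$, so the skew power series ring is $R[[t;\sigma]]$, whose elements are sequences $\sum_{i\ge 0} r_i t^i$ with $r_i\in R$ subject to the relation $tr=\sigma(r)t$ and continuity.

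First I would define the map $\Phi\colon R[[t;\sigma]]\to C_1[[t;\sigma_1]]\times\cdots\times C_\ell[[t;\sigma_\ell]]$ on a power series $\sum_i r_i t^i$ by sending it to the tuple $\big(\sum_i e_j r_i t^i\big)_{1\le j\le\ell}$, where $e_j$ is the central idempotent of $R$ corresponding to the factor $C_j$; equivalently one writes each $r_i=(r_i^{(1)},\ldots,r_i^{(\ell)})$ and distributes. This is visibly a bijection of the underlying topological modules, since a coherent choice of coefficients $r_i^{(j)}\in C_j$ for all $j$ amounts exactly to a choice of $r_i\in R$, and the pseudocompact topologies match componentwise. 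It is clearly additive and unit-preserving.

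Next I would verify multiplicativity. Because the relation $tr=\sigma(r)t$ in $R[[t;\sigma]]$ translates, under projection to the $j$-th factor, into $t\,r^{(j)}=\sigma_j(r^{(j)})t$ in $C_j[[t;\sigma_j]]$ — using that $e_j\sigma(r)=\sigma_j(e_j r)$ since $\sigma$ permutes the central idempotents and $\sigma_j$ is by definition the restriction of $\sigma$ to the $\sigma$-stable factor $C_j$ — the multiplication formula $t^k r=\sum_{0\le m\le k} M_{k-m,m}(0,\sigma)(r)t^m = \sigma^k(r)t^k$ is compatible with $\Phi$ on each factor. Since multiplication in $R[[t;\sigma]]$ is determined by this relation together with continuity and distributivity (all of which $\Phi$ respects), it follows that $\Phi$ is a ring homomorphism, hence a ring isomorphism.

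The main point to be careful about — and really the only subtlety — is the interaction of $\sigma$ with the idempotents: $\sigma$ need not fix each $R_i$, only permute them, and the decomposition into the $C_j$ is precisely the coarsening that makes each $C_j$ a $\sigma$-stable two-sided ideal, so that $\sigma_j:=\sigma|_{C_j}$ is a genuine automorphism of $C_j$. Once this is in place the identity $e_j\cdot\sigma(r)=\sigma_j(e_j\cdot r)$ holds for all $r\in R$, the relation defining the skew power series multiplication passes to each factor, and the verification is entirely routine. This is why the statement is, as remarked, obvious; I would simply record the map $\Phi$ and note that it is bijective, continuous in both directions, and respects the defining relation componentwise.
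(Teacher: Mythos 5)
Your proof is correct and is precisely the verification the paper declares \emph{obvious} without writing it out: define the componentwise bijection via the central idempotents, note it is additive, continuous, and topologically an isomorphism of modules, and check multiplicativity via $e_j\sigma(r)=\sigma(e_j r)=\sigma_j(e_j r)$, which holds because each $C_j$ is $\sigma$-stable and hence $\sigma(e_j)=e_j$. You correctly identify that the only point requiring care is that $\sigma$ stabilises each $C_j$ (rather than each simple factor $R_i$), which is exactly how the $C_j$ were constructed in the paper.
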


  Because of Lemma \ref{PC-product}   the
crucial case to consider therefore is  the {\em cyclic case,} i.e.\
the situation where $R$ equals some $C_j$ as above, i.e.\ we assume
that $R=R_1\times\ldots \times R_n$ (with $R_i$ simple) and that
$\sigma$ is given by $\tau_i:R_i\to R_{i+1}$ for $i=1,\ldots ,n-1$
and $\tau_n:R_n\to R_1$ in the following way:
\[\sigma(r_1,\ldots,r_n)=   (\tau_n(r_n), \tau_1(r_1),\ldots
,\tau_{n-1}(r_{n-1})).\]   We now define isomorphisms of
rings
\begin{align*}
    \psi : R = R_1\times \ldots \times R_n & \to R_1\times \ldots \times R_1\\
    (r_1,\ldots, r_n) & \mapsto
(r_1,\tau_n\circ\ldots \circ\tau_2(r_2),\ldots, \tau_n(r_n))
\end{align*}
and
\begin{equation*}
    \phi=\phi_0\times \id_{R_1}\times \ldots \times\id_{R_1} :
    R_1\times \ldots \times R_1\to R_1\times \ldots \times R_1
\end{equation*}
where $\phi_0 := \tau_n\circ \tau_{n-1}\circ \ldots \circ \tau_1$ as
well as the cyclic permutation
\begin{align*}
    \pi:R_1\times \ldots \times  R_1 & \to R_1\times \ldots \times
    R_1 \\ (r_1,\ldots,r_n) & \mapsto (r_n,r_1,\ldots ,r_{n-1}).
\end{align*}
Then the following diagram of isomorphisms of rings is clearly
commutative:
\[\xymatrix{
  R=R_1\times \ldots \times R_n \ar[dd]_{\sigma} \ar[r]^{\quad\psi} & R_1\times \ldots \times R_1 \ar[d]^{\phi} \\
        & R_1\times \ldots \times R_1 \ar[d]^{\pi} \\
  R=R_1\times \ldots \times R_n \ar[r]^{\quad\psi} & R_1\times \ldots \times R_1  }\]
Note that $\pi\circ \phi\neq \phi\circ \pi$ if $n\geq2,$ but that
\be\label{phi0} (\pi\circ \phi)^n=\phi_0\times \ldots \times
\phi_0.\ee   The isomorphism $\psi$ induces therefore  a
natural identification of rings
\[R[[t;\sigma]]\cong (R_1\times \ldots \times R_1)[[t;\pi\circ\phi]].\]
Furthermore, we obtain   the  injective homomorphism of
rings
\begin{align*}
    (R_1\times \ldots \times R_1)[[x;(\pi\circ\phi)^n]] & \hookrightarrow
    (R_1\times \ldots \times R_1)[[t;\pi\circ\phi]] \\
    x & \mapsto t^n,
\end{align*}
and the latter ring is free of rank $n$ over the   former
(from the left as well as right). By Lemma \ref{skewpower-product}
and \eqref{phi0} the ring $(R_1\times \ldots \times
R_1)[[x;(\pi\circ\phi)^n]]$ can be identified with the product
$R_1[[x;\phi_0]]\times \ldots \times R_1[[x;\phi_0]]$, and we have
shown the following

\begin{prop}\label{finiteextension}In the cyclic case
$R[[t,\sigma]]$ is a free   (left or right)  module of
finite rank $n$ over   a  subring   isomorphic
to  $R_1[[x;\phi_0]]\times \ldots \times R_1[[x;\phi_0]]$
  where $x$ corresponds to $t^n.$
\end{prop}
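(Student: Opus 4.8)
The plan is to chain together the ring isomorphisms and free-module statements assembled in the paragraphs above. Everything rests on the commutativity of the displayed square, i.e.\ on the identity $\psi\circ\sigma=\pi\circ\phi\circ\psi$, which one verifies by tracking a tuple $(r_1,\ldots,r_n)$ through both composites using the definitions of $\psi$, $\phi$ and $\pi$. Since an isomorphism of pairs $(C,\tau)\cong(C',\tau')$ extends coefficientwise to an isomorphism $C[[t;\tau]]\cong C'[[t;\tau']]$ fixing the variable $t$, the isomorphism $\psi$ induces a ring isomorphism
\[
R[[t;\sigma]]\ \cong\ (R_1\times\ldots\times R_1)[[t;\pi\circ\phi]]
\]
which is the identity on $t$.

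Next I would analyse the target ring. Since the derivation is trivial here, $t^n r=(\pi\circ\phi)^n(r)\,t^n$ for every coefficient $r$, so the assignment $x\mapsto t^n$ defines a ring homomorphism
\[
(R_1\times\ldots\times R_1)[[x;(\pi\circ\phi)^n]]\ \hookrightarrow\ (R_1\times\ldots\times R_1)[[t;\pi\circ\phi]],
\]
and it is injective because it spreads the coefficients of a power series in $x$ over the multiples of $n$ in a power series in $t$. Grouping an arbitrary element $\sum_{i\geq0}a_it^i$ of the target according to the residue class of $i$ modulo $n$ exhibits it uniquely as $\sum_{j=0}^{n-1}b_jt^j$ with $b_j:=\sum_{k\geq0}a_{nk+j}t^{nk}$ lying in the image of the source; hence the target is free of rank $n$ as a left module over the source with basis $1,t,\ldots,t^{n-1}$. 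The symmetric relation $r\,t^n=t^n(\pi\circ\phi)^{-n}(r)$ gives freeness of rank $n$ on the right by the same grouping from the other side.

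Finally I would identify the base ring. By \eqref{phi0} we have $(\pi\circ\phi)^n=\phi_0\times\ldots\times\phi_0$, so the pair $(R_1\times\ldots\times R_1,(\pi\circ\phi)^n)$ is the product of the $n$ pairs $(R_1,\phi_0)$, and Lemma \ref{skewpower-product} then gives
\[
(R_1\times\ldots\times R_1)[[x;(\pi\circ\phi)^n]]\ \cong\ R_1[[x;\phi_0]]\times\ldots\times R_1[[x;\phi_0]].
\]
Composing the isomorphisms above with the freeness statement — the image of the $t^n$-subring is a left (and right) free submodule of rank $n$ of $(R_1\times\ldots\times R_1)[[t;\pi\circ\phi]]\cong R[[t;\sigma]]$ — yields the proposition, with $x$ corresponding to $t^n$. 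The whole argument is bookkeeping; the only step deserving real care is the verification that $\psi$ intertwines $\sigma$ with $\pi\circ\phi$, but that is a short computation with the definitions and is in effect already encoded in the commutative diagram above, so I do not anticipate a genuine obstacle.
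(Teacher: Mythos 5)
Your proposal is correct and follows the paper's own route exactly: verify that $\psi$ intertwines $\sigma$ with $\pi\circ\phi$, transport to $(R_1\times\ldots\times R_1)[[t;\pi\circ\phi]]$, observe that $x\mapsto t^n$ embeds $(R_1\times\ldots\times R_1)[[x;(\pi\circ\phi)^n]]$ as a subring over which the big ring is free of rank $n$ via the mod-$n$ grouping of exponents, and then identify that subring using \eqref{phi0} and Lemma \ref{skewpower-product}. You have merely made explicit the bookkeeping (the residue-class decomposition and the freeness on both sides) that the paper leaves implicit in the sentence preceding the proposition.
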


By  Proposition \ref{finiteextension} the case where $R$ is a simple
Artinian ring, i.e.\ $R\cong M_n(D)$ for some skew field $D,$ merits
special attention. If $\gamma$ denotes an automorphism of $D$ we
write $M_n(\gamma)$ for the   induced automorphism of
$M_n(D)$ given by applying $\gamma$ to each matrix entry.  For our
purposes the following observation will be crucial.

\begin{prop}\label{inner}
Every automorphism  $\sigma$ of the ring $M_n(D)$ decomposes into
the composite
\[\sigma=Int(C)\circ M_n(\gamma) \]
for some automorphism $\gamma$ of $D$ and some   inner
automorphism $Int(C)$ corresponding to an  invertible matrix $C\in
M_n(D).$
\end{prop}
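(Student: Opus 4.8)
The plan is to use the Skolem–Noether theorem, which says precisely that every automorphism of a central simple algebra over a field is inner, combined with a descent to separate out the ``non-central'' part of the automorphism coming from the skew field $D$ itself. The subtlety is that $M_n(D)$ is central simple not over $\bbC$ or $\mathbb{R}$ but over the center $Z(D)$ of $D$, and an arbitrary automorphism $\sigma$ of $M_n(D)$ need not fix $Z(D)$ pointwise, so Skolem–Noether does not apply directly to $\sigma$.

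First I would observe that $\sigma$ maps the center to the center, inducing an automorphism $\sigma_0$ of $Z := Z(M_n(D)) = Z(D)$. The key idea is to correct $\sigma$ by an automorphism of the form $M_n(\gamma)$ so that the composite fixes $Z$ pointwise. To do this, I would invoke the Skolem–Noether theorem in the form that automorphisms of $D$ restricting to a given automorphism of its center always exist when $D$ is a skew field finite-dimensional over its center — but to stay in the generality of an \emph{arbitrary} skew field $D$ one needs a different argument. So instead I would argue as follows: the restriction $\sigma|_{D}$, viewing $D \hookrightarrow M_n(D)$ as scalar matrices, maps $D$ isomorphically onto \emph{some} simple subalgebra $\sigma(D)$ of $M_n(D)$; but one checks $\sigma(D)$ is again the set of scalar matrices for \emph{some} identification, and more directly, the diagonal embedding $D \to M_n(D)$ has the property that $\sigma$ carries the standard matrix units $e_{ij}$ to another system of matrix units $f_{ij} := \sigma(e_{ij})$, which by a standard lemma (every complete system of $n^2$ matrix units in a ring $T$ arises from an isomorphism $T \cong M_n(e_{11}Te_{11})$) gives $M_n(D) \cong M_n(D')$ with $D' = f_{11}M_n(D)f_{11} \cong e_{11}M_n(D)e_{11} = D$; conjugating by the invertible matrix implementing this change of matrix units, we may assume $\sigma(e_{ij}) = e_{ij}$ for all $i,j$.

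Having arranged $\sigma(e_{ij}) = e_{ij}$ — at the cost of composing $\sigma$ with an inner automorphism $Int(C)$ — the map $\sigma$ now commutes with the idempotents $e_{ii}$ and the units $e_{ij}$, and hence is determined by its restriction to $e_{11}M_n(D)e_{11} = D$ (scalar copy in the $(1,1)$-slot); that restriction is an automorphism $\gamma$ of $D$, and one checks directly from $a = \sum_{i,j} e_{i1}(e_{11}ae_{11})_{\text{shifted}} e_{1j}$-type identities that $\sigma = M_n(\gamma)$ on all of $M_n(D)$. Unwinding, the original $\sigma$ equals $Int(C)^{-1}\circ M_n(\gamma) = Int(C')\circ M_n(\gamma)$ for a suitable invertible $C'$, which is the desired decomposition.

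\textbf{The main obstacle} I anticipate is the matrix-units bookkeeping: verifying cleanly that a system of matrix units $f_{ij} = \sigma(e_{ij})$ in $M_n(D)$ is conjugate, by an invertible element of $M_n(D)$, to the standard one — this is a standard fact (it amounts to the uniqueness up to inner automorphism of the Wedderburn decomposition, or to Noether–Skolem for the separable subalgebra $M_n(k) \subseteq M_n(D)$ generated by the $e_{ij}$), but stating it at the right level of generality for a possibly infinite-dimensional skew field $D$ requires care. The cleanest route is probably: the two embeddings $M_n(\text{prime field or center}) \rightrightarrows M_n(D)$ given by $e_{ij} \mapsto e_{ij}$ and $e_{ij}\mapsto f_{ij}$ have isomorphic (in fact equal, $\cong D$) centralizers and are conjugate since $M_n(D)$ is simple Artinian — invoking \cite[Skolem–Noether, or the corollary in chap.\ 3]{mc-rob}. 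After this conjugation everything else is a direct verification.
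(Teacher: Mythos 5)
Your argument is correct and takes a genuinely different route from the paper. The paper simply cites the Isomorphism Theorem for $\End_D(V)$ from Jacobson's \emph{Structure of Rings} (III, \S 5): every ring automorphism $s$ of $\End_D(V)$ has the form $s(f) = SfS^{-1}$ for a $\gamma$-semilinear bijection $S$ of $V$ and some automorphism $\gamma$ of $D$; the decomposition $\sigma = Int(C)\circ M_n(\gamma)$ then drops out by writing $S = F\circ\Gamma$ with $\Gamma$ the componentwise $\gamma$-map and $F = S\Gamma^{-1}$ $D$-linear, hence a matrix $C$. You instead stay entirely inside $M_n(D)$ and work with the matrix-unit system: conjugating by a unit that carries $f_{ij} := \sigma(e_{ij})$ back to $e_{ij}$ reduces to the case $\sigma(e_{ij}) = e_{ij}$, where $\sigma$ preserves the centralizer of the $e_{ij}$ (the scalar matrices $\cong D$) and is determined by its restriction $\gamma$ there, giving $\sigma = M_n(\gamma)$. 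The two proofs are close in spirit: both are Wedderburn-type arguments that ultimately rest on the uniqueness of the simple module over a simple Artinian ring (that is what drives Jacobson's theorem too). What your version buys is that it avoids citing the Isomorphism Theorem at all, replacing it with the conjugacy of complete systems of matrix units in a simple Artinian ring, which you can either prove directly (as you indicate, via an isomorphism $M_n(D)e_{11} \cong M_n(D)f_{11}$ of simple left modules giving $p\in e_{11}M_n(D)f_{11}$, $q\in f_{11}M_n(D)e_{11}$ with $pq = e_{11}$, $qp = f_{11}$, and then take $u = \sum_i e_{i1} p f_{1i}$) or find in many references. One caveat you already flag, and correctly: since $D$ need not be finite-dimensional over its center, the classical Skolem--Noether theorem is not directly applicable, so the honest justification is the simple-Artinian module argument rather than Skolem--Noether as such; with that understood, your sketch is sound.
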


\begin{proof}
This is an easy consequence of the Isomorphism Theorem in \cite[III.
\S 5]{jac} which we shall explain briefly for the convenience of the
reader: Let $s $ be any ring   automorphism of
$\End_D(V)$ for some finite dimensional $D$-vector space $V.$ Then
there exists an automorphism $\gamma$ of $D$ and a $\gamma$-linear
bijective map $S:V\to V$ such that $s(f)=SfS^{-1}.$  We apply this
to the standard $D$-vector space $V=D^n$ and to the automorphism $s$
which corresponds to $\sigma$ under the identification of
$\End_D(D^n)$ with $M_n(D)$ by using the standard basis
$\{e_1,\ldots ,e_n\}$ of $D^n.$ Consider the $\gamma$-linear
bijective map $\Gamma: D^n\to D^n,(d_1,\ldots ,d_n) \mapsto (\gamma
d_1,\ldots,\gamma d_n)$. Clearly, $S\circ \Gamma^{-1}$ is a
$D$-linear map of $V$, say $F,$ given with respect to the standard
basis by an invertible matrix, say $C.$ Thus we obtain that
\be\label{End} s(f)= F\circ\Gamma \circ f \circ\Gamma^{-1}\circ
F^{-1}\ee for all $f\in \End_D(V).$ Now it is easy to check that
under the identification $\End_D(D^n)=M_n(D)$ the automorphism
$\End_D(V)\to \End_D(V), f\mapsto \Gamma\circ f\circ\Gamma^{-1},$
corresponds to $M_n(\gamma)$ and thus \eqref{End} becomes
\[\sigma(A)= CM_n(\gamma)(A)C^{-1}=Int(C)\circ M_n(\gamma)(A)\] for all $A\in M_n(D).$
\end{proof}

Before we can draw our promised conclusion from this proposition we have to prove the following

\begin{lem}\label{skew-inner}
Let $C$ be an arbitrary ring with ring automorphism $\sigma$ and let
$u$ be a unit in $C$.   If $Int(u)$ denotes the inner
automorphism $c\mapsto ucu^{-1}$ of $C$ then there are canonical
isomorphisms of rings
\begin{align*}
    C[[t;Int(u)\circ \sigma]] & \xrightarrow{\cong} C[[t;\sigma]] \\
           \sum a_n t^n & \mapsto  \sum a_n (ut)^n
\end{align*}
and
\begin{align*}
    C[[t; \sigma\circ Int(u)]] & \xrightarrow{\cong} C[[t;\sigma]]
    \\
    \sum a_n t^n &\mapsto \sum a_n(\sigma(u)t)^n=\sum a_n (tu)^n.
\end{align*}
\end{lem}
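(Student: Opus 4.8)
The plan is to show that each of the two indicated maps is a bijective ring homomorphism by an essentially formal computation inside the skew power series ring, treating the first isomorphism in detail since the second will follow from it at once. Since $\delta = 0$ throughout this subsection, the commutation relation in $C[[t;\sigma]]$ is simply $ta = \sigma(a)t$, so by induction $(ut)^n = u\,\sigma(u)\,\sigma^2(u)\cdots\sigma^{n-1}(u)\,t^n =: u_n t^n$, where $u_0 := 1$ and each $u_n$ is a \emph{unit} of $C$ (a finite product of images of the unit $u$ under powers of the automorphism $\sigma$). Hence the assignment $\Phi\colon \sum_{n\geq 0} a_n t^n \mapsto \sum_{n\geq 0} a_n (ut)^n = \sum_{n\geq 0} (a_n u_n) t^n$ is a well-defined, continuous, additive, left $C$-linear map $C[[t;Int(u)\circ\sigma]] \to C[[t;\sigma]]$ restricting to the identity on $C$; there is no convergence issue, as $\Phi$ merely rescales the $n$-th coefficient by the unit $u_n$, and for the same reason it is bijective with inverse $\sum b_n t^n \mapsto \sum (b_n u_n^{-1}) t^n$.

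It then remains to check that $\Phi$ is multiplicative, and I would do this in either of two ways. The quick route: the image $s := \Phi(t) = ut$ satisfies the defining relation of $C[[t;Int(u)\circ\sigma]]$, namely $s\,a = (Int(u)\circ\sigma)(a)\,s$ for all $a \in C$, since $s\,a = u(ta) = u\sigma(a)t = \bigl(u\sigma(a)u^{-1}\bigr)(ut) = (Int(u)\circ\sigma)(a)\,s$; hence $\Phi$ coincides with the unique continuous ring homomorphism determined by $a\mapsto a$ and $t\mapsto s$, given by the universal property of the skew power series ring. The explicit route, in case one prefers not to invoke that universal property: writing $\tau := Int(u)\circ\sigma$ one has the identities $\tau^i = Int(u_i)\circ\sigma^i$ and $u_{i+j} = u_i\,\sigma^i(u_j)$, and these turn $\Phi\bigl((\sum_i a_i t^i)(\sum_j b_j t^j)\bigr) = \sum_m\bigl(\sum_{i+j=m} a_i \tau^i(b_j)\bigr) u_m t^m$ (the case $\delta = 0$ of \eqref{mult}) into $\sum_m \bigl(\sum_{i+j=m} a_i u_i \sigma^i(b_j u_j)\bigr) t^m = \Phi(\sum_i a_i t^i)\,\Phi(\sum_j b_j t^j)$. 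Either way $\Phi$ is a ring isomorphism; if one wishes to avoid the coefficient formula even for bijectivity, note that $u^{-1}t \in C[[t;Int(u)\circ\sigma]]$ satisfies $(u^{-1}t)a = \sigma(a)(u^{-1}t)$, hence defines a ring homomorphism $C[[t;\sigma]]\to C[[t;Int(u)\circ\sigma]]$, $t\mapsto u^{-1}t$, visibly two-sided inverse to $\Phi$ on the topological generators $C$ and $t$.

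For the second isomorphism I would observe that $\sigma(uau^{-1}) = \sigma(u)\sigma(a)\sigma(u)^{-1}$ yields the identity of ring automorphisms $\sigma\circ Int(u) = Int(\sigma(u))\circ\sigma$, so the second isomorphism is exactly the first one applied with $\sigma(u)$ in place of $u$: it sends $\sum a_n t^n \mapsto \sum a_n(\sigma(u)t)^n$, and since $tu = \sigma(u)t$ in $C[[t;\sigma]]$ (again because $\delta = 0$) this equals $\sum a_n(tu)^n$, as stated. The only place requiring any care is the bookkeeping with non-commutativity in the explicit verification of multiplicativity, i.e.\ the two identities $\tau^i = Int(u_i)\circ\sigma^i$ and $u_{i+j} = u_i\sigma^i(u_j)$; invoking the universal property sidesteps even that, so I anticipate no genuine obstacle here.
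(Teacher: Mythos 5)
Your proposal is correct and follows essentially the same line as the paper: the paper also verifies multiplicativity by checking that the defining relation $ta = u\sigma(a)u^{-1}t$ maps under $t \mapsto ut$ to the valid identity $(ut)a = u\sigma(a)t = (u\sigma(a)u^{-1})(ut)$ in $C[[t;\sigma]]$, invoking the multiplication formula from \cite{sch-ven}, and treats bijectivity and additivity as immediate. Your elaboration (the explicit coefficient rescaling by the units $u_n$, the universal‑property phrasing, the reduction of the second map to the first via $\sigma\circ Int(u)=Int(\sigma(u))\circ\sigma$) only spells out what the paper leaves implicit.
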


\begin{proof}
Both maps are obviously bijective and additive.   The
multiplicativity follows by a straightforward computation based on
the multiplication formula (4) in \cite{sch-ven}. E.g., under the
first map the relation $ta= u\sigma(a)u^{-1}t$ corresponds to
$uta=u\sigma(a)u^{-1}ut=u\sigma(a) t,$ which is equivalent to the
law $ta=  \sigma(a) t$ that holds in $C[[t;\sigma]].$
\end{proof}

\begin{cor}\label{matrix-skew}
 Let $\sigma$ be any automorphism of $M_n(D)$. Then there are     isomorphisms of rings
 \[M_n(D)[[t;\sigma]]\cong M_n(D)[[t;M_n(\gamma)]]\cong M_n(D[[t;\gamma]])\] where $\gamma$ is as
 in Proposition \ref{inner}.
  \end{cor}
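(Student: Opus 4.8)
The plan is to combine Proposition~\ref{inner} with Lemma~\ref{skew-inner} and then identify the resulting skew power series ring over the matrix ring with a matrix ring over a skew power series ring.

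First I would apply Proposition~\ref{inner} to write $\sigma = Int(C)\circ M_n(\gamma)$ for a suitable automorphism $\gamma$ of $D$ and an invertible matrix $C\in M_n(D)$. Since $C$ is a unit in $M_n(D)$, Lemma~\ref{skew-inner} (first isomorphism, with $u=C$ and $\sigma$ there taken to be $M_n(\gamma)$) yields a ring isomorphism
\[ M_n(D)[[t;\sigma]] = M_n(D)[[t; Int(C)\circ M_n(\gamma)]] \xrightarrow{\cong} M_n(D)[[t;M_n(\gamma)]], \]
which gives the first displayed isomorphism in the corollary.

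Next I would establish the second isomorphism $M_n(D)[[t;M_n(\gamma)]]\cong M_n(D[[t;\gamma]])$. The natural candidate map sends a skew power series $\sum_k A_k t^k$ with $A_k\in M_n(D)$ to the matrix whose $(i,j)$-entry is the skew power series $\sum_k (A_k)_{ij}\, t^k \in D[[t;\gamma]]$; equivalently, one identifies a power series of matrices with a matrix of power series entrywise. This map is clearly additive and bijective (it is a reindexing of the coefficients), and it is continuous for the pseudocompact topologies. The point to check is multiplicativity: on the left-hand side the commutation rule is $t\cdot A = M_n(\gamma)(A)\cdot t$, i.e. $t$ commutes past a matrix by applying $\gamma$ to each entry; on the right-hand side, in each matrix entry of $M_n(D[[t;\gamma]])$ one has the rule $t a = \gamma(a) t$ for $a\in D$, and the matrix multiplication distributes these entrywise. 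A short computation comparing the Cauchy-type product formula for $M_n(D)[[t;M_n(\gamma)]]$ with ordinary matrix multiplication of the entrywise power series shows the two agree, because applying $M_n(\gamma)$ to a matrix and then multiplying is the same as multiplying the entrywise $\gamma$-twisted products.

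The main obstacle, such as it is, is purely bookkeeping: making the identification $M_n(D)[[t;M_n(\gamma)]]\cong M_n(D[[t;\gamma]])$ precise as pseudocompact rings and verifying multiplicativity against the multiplication formula (formula (4) in \cite{sch-ven}), exactly as in the proof of Lemma~\ref{skew-inner}. No serious difficulty arises since $M_n(\gamma)$ acts diagonally (entrywise), so it commutes cleanly with the matrix structure; the whole corollary is essentially a formal consequence of the two inputs.
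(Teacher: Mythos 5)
Your proposal is correct and follows exactly the paper's route: Proposition~\ref{inner} plus the first isomorphism of Lemma~\ref{skew-inner} gives $M_n(D)[[t;\sigma]]\cong M_n(D)[[t;M_n(\gamma)]]$, and the second isomorphism is the entrywise identification which the paper also invokes (calling it ``easily checked''). Your fuller spelling out of the multiplicativity of that entrywise map is sound and merely makes explicit what the paper leaves to the reader.
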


\begin{proof}
The first isomorphism follows from the Proposition \ref{inner} combined with Lemma
\ref{skew-inner}. The second  one is easily checked.
\end{proof}

The following result is certainly well-known.

\begin{prop}\label{PLI}
For a skew field $D$ the power series $D[[t;\sigma]]$ form a principal  left (and principal right)
ideal domain. All its left (right) ideals are two-sided and they are precisely the ideals of the
form $D[[t;\sigma]]t^n$ with $n\geq 0.$
\end{prop}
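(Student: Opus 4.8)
The plan is to mimic the classical proof that $D[[t]]$ is a discrete valuation ring, with the twisted multiplication $tr = \sigma(r)t$ posing no real obstruction because $\sigma$ is a ring automorphism of $D$. First I would define, for a nonzero power series $f = \sum_{i \geq 0} a_i t^i \in D[[t;\sigma]]$, its \emph{order} $v(f) := \min\{i : a_i \neq 0\}$ and set $v(0) := \infty$. The key observation is that $D[[t;\sigma]]$ has no zero divisors: if $f,g \neq 0$ with $v(f) = k$, $v(g) = l$, then the coefficient of $t^{k+l}$ in $fg$ is $a_k \sigma^k(b_l)$ (using the relation $t^k r = \sigma^k(r) t^k$ in the pure skew power series case, since $\delta = 0$ here), and this is nonzero because $a_k \neq 0$, $b_l \neq 0$, $D$ is a skew field, and $\sigma^k$ is bijective. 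Hence $v(fg) = v(f) + v(g)$ and $D[[t;\sigma]]$ is a domain.

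Next I would show that any nonzero $f$ with $v(f) = n$ can be written as $f = u t^n = t^n u'$ for units $u, u'$. Writing $f = a_n t^n + a_{n+1}t^{n+1} + \cdots = (a_n + a_{n+1}t + \cdots) t^n$ reduces us to the case $v(f) = 0$, i.e. $a_0 \in D^\times$. Such an $f$ is a unit: one constructs a left inverse $g = \sum b_i t^i$ by solving the coefficient equations recursively — the equation for the constant term gives $b_0 a_0 = 1$ (so $b_0 = a_0^{-1}$), and the equation for $t^m$ expresses $b_m$ in terms of $b_0, \ldots, b_{m-1}$ after multiplying on the right by $a_0^{-1}$ (here one uses that $a_0$ is invertible and that the $\sigma^i$ are automorphisms, so the relevant coefficients can be solved for). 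The series so obtained converges in the $t$-adic topology. The same argument on the other side (or a symmetry argument using the second relation in \eqref{relations} with $\delta = 0$) gives a right inverse, so $f$ is a two-sided unit. Writing $f = t^n u'$ is analogous: pull $t^n$ out on the left, noting $a_i t^i = t^i \sigma^{-i}(a_i) t^0 \cdot$ (appropriate rearrangement), or simply observe that $u t^n = t^n (\sigma^{-n}(\text{leading part}) \cdots)$, again a unit times $t^n$.

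From this structure theorem the classification of one-sided ideals is immediate. Let $J$ be a nonzero left ideal and let $n := \min\{v(f) : f \in J, f \neq 0\}$; pick $f \in J$ with $v(f) = n$. By the above $f = t^n u'$ with $u'$ a unit, but I actually want $f = ut^n$; better: among elements of $J$ of order $n$, any such $g$ satisfies $g = t^n u_g$, hence $t^n = g u_g^{-1} \in J$ — wait, that needs $g = t^n u_g$, which is the second form, and $g u_g^{-1} = t^n$; so $t^n \in J$. Conversely any $g \in J$ has $v(g) = m \geq n$, and $g = t^m u_g = t^n (t^{m-n} u_g) \in D[[t;\sigma]] t^n$; thus $J = D[[t;\sigma]]t^n$. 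The right-ideal case is symmetric. Since $D[[t;\sigma]]t^n$ is visibly two-sided (from $t^n r = \sigma^n(r) t^n$ and $r t^n = t^n \sigma^{-n}(r)$), all left ideals are two-sided and coincide with all right ideals, and $D[[t;\sigma]]$ is a principal left and right ideal domain.

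The only point requiring genuine care — the main (mild) obstacle — is the bookkeeping in the recursive construction of inverses in the twisted setting: one must check that the relation $tr = \sigma(r)t$ lets one solve the coefficient recursion consistently on \emph{both} sides, i.e. that a series with unit constant term has a two-sided inverse, not merely a one-sided one. This follows cleanly from $\sigma$ being an automorphism (so $\sigma^{-1}$ exists and the "other-side" multiplication formula $rt = t\sigma^{-1}(r)$ from \eqref{relations} is available), but it is the step where the skewness genuinely enters and deserves to be spelled out rather than dismissed as identical to the commutative case. Everything else is the standard valuation-ring argument verbatim.
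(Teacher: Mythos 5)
Your approach is genuinely different from the paper's: the paper invokes the noncommutative Weierstrass preparation theorem of \cite{ven-weier} as a black box (noting that in the present situation a distinguished polynomial is simply a power $t^n$), and then the classification of ideals falls out in two lines. You instead reconstruct everything from scratch, defining the order valuation $v$, proving $v(fg)=v(f)+v(g)$ to get the domain property, and building inverses of order-zero elements by the classical coefficient recursion. Your route is more self-contained and elementary, at the cost of length; the paper's is terser but carries an external dependency. Both are valid, and yours has the pedagogical merit of making visible why $\sigma$ being an automorphism is the only hypothesis really used.

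That said, the slips you make occur precisely at the spot you yourself flagged as the genuine pitfall, namely the left-vs.-right bookkeeping. In classifying a left ideal $J$ you pick $g\in J$ of minimal order $n$, write $g=t^n u_g$, and conclude $t^n=g u_g^{-1}\in J$. But $g u_g^{-1}$ is a \emph{right} multiplication, which a left ideal need not absorb; you want the other factorisation $g=u_g t^n$, so that $u_g^{-1}g=t^n\in J$ by \emph{left} multiplication. Likewise, for the reverse inclusion you write $g=t^n(t^{m-n}u_g)$ and claim membership in $D[[t;\sigma]]t^n$, but that set has $t^n$ on the \emph{right}; you should write $g=u_g t^m=(u_g t^{m-n})t^n$. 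A third small slip: in the recursion for the left inverse the $t^m$-equation is $b_m\sigma^m(a_0)+\sum_{i<m}b_i\sigma^i(a_{m-i})=0$, so the element you multiply by on the right is $\sigma^m(a_0)^{-1}$, not $a_0^{-1}$. All three are one-line fixes and do not threaten the strategy, but since you correctly identified direction-of-multiplication as the place where the skewness genuinely enters, it is worth executing that step without wobble.
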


\begin{proof}
For lack of a reference known to us we give a proof using the Weierstrass preparation theorem
\cite[cor.\ 3.2]{ven-weier}, which says that any element in $D[[t;\sigma]]$ can be  written as a
unit times a distinguished polynomial (or vice versa) in $t.$ But note that in this situation a
distinguished polynomial is just a power $t^n$ of $t$ with $n\geq 0.$ Now, if $n$ is the minimal
such exponent among all elements of a given left (or right) ideal $I,$ then $t^n$ certainly
generates $I.$ Since $t$ is regular, it follows that $D$ has no zero divisor, i.e.\ is a domain.
\end{proof}

\begin{cor}
Let $\sigma$ be any automorphism of $M_n(D)$. Then  $M_n(D)[[t;\sigma]]$ is a principal left ideal
and principal right ideal ring.
\end{cor}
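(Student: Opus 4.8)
The plan is to deduce this corollary directly from the two preceding results by transporting the ring-theoretic property along an explicit chain of isomorphisms. First I would invoke Corollary \ref{matrix-skew}, which gives isomorphisms of rings
\[M_n(D)[[t;\sigma]]\cong M_n(D)[[t;M_n(\gamma)]]\cong M_n(D[[t;\gamma]])\]
for a suitable automorphism $\gamma$ of $D$. Thus it suffices to prove that the matrix ring $M_n(E)$ over $E:=D[[t;\gamma]]$ is a principal left ideal and principal right ideal ring.

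Next I would bring in Proposition \ref{PLI}, which tells us that $E=D[[t;\gamma]]$ is a principal left (and principal right) ideal domain; in fact all its one-sided ideals are two-sided, of the form $Et^n$. The remaining point is therefore the general fact that if $E$ is a principal left ideal ring, then so is the full matrix ring $M_n(E)$ — and symmetrically on the right. I would cite this from a standard reference: for a principal (left) ideal domain this follows from the theory of matrices over such rings (Smith normal form / the structure theory in \cite[\S 1.7 or \S 3.5]{mc-rob}), since every matrix over a principal ideal domain can be brought by invertible row and column operations into diagonal form, which reduces left ideals of $M_n(E)$ to direct sums of left ideals of $E$. Alternatively, one notes $M_n(E)$ is Morita equivalent to $E$, and the property of every left ideal being principal is preserved under Morita equivalence (the lattice of left ideals of $M_n(E)$ is isomorphic to the lattice of left $E$-submodules of $E^n$, and finitely generated submodules of a free module over a principal left ideal domain are again free of smaller rank, hence cyclic after the diagonalisation). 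Combining the right-hand analogue gives that $M_n(D[[t;\gamma]])$ is both a principal left ideal and a principal right ideal ring, and then transporting back along the isomorphisms of Corollary \ref{matrix-skew} finishes the proof.

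The only genuine subtlety — and hence the step I would be most careful about — is the passage from ``principal ideal domain'' to ``matrix ring is a principal ideal ring''. Over a noncommutative principal ideal domain one must use the correct noncommutative diagonalisation statement: a left ideal of $M_n(E)$ corresponds to a finitely generated left $E$-submodule $M\subseteq E^n$, and because $E$ is a principal left ideal domain, $M$ is free of rank $\le n$; choosing a basis and applying invertible column operations over $E$ realises $M$ as a single column times $M_n(E)$, i.e.\ as a principal left ideal. One must check that the relevant ``invariant factor'' argument goes through without commutativity — this is exactly what is handled in the noncommutative PID literature and what \cite{mc-rob} records. Once that citation is in place, the rest is formal.
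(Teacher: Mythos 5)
Your proposal is correct and follows exactly the paper's route: reduce via Corollary \ref{matrix-skew} to the matrix ring $M_n(D[[t;\gamma]])$, note $D[[t;\gamma]]$ is a principal left/right ideal domain by Proposition \ref{PLI}, and invoke the standard fact that matrix rings over principal ideal rings are principal ideal rings (the paper cites \cite[prop.\ 3.4.10]{mc-rob} for exactly this). Your extra paragraph on the noncommutative diagonalisation/Morita argument is a reasonable unpacking of that citation, but not additional content beyond the paper's one-line proof.
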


\begin{proof}
Combine Corollary \ref{matrix-skew} with \cite[prop.\ 3.4.10]{mc-rob}.
\end{proof}

\begin{rem}\label{free}
In the cyclic case (of order $n$) we have shown that $R[[t;\sigma]]$
is free of finite rank as a   (left or right)
$D[[x;\gamma]]$-module. Here $x=t^n$ and $\gamma$ corresponds via
Proposition \ref{inner} to $\phi_0$ which in turn corresponds to
$\sigma.$
\end{rem}

\begin{prop}\label{C=S}
   Assuming that $R$ is semisimple  we have:
\begin{enumerate}
\item $\C_A(0)=   S_l(R \subseteq A)=S_r(R \subseteq
A) =:S$.
\item The set $S$ is an (left and right) Ore set of $A$ consisting of
regular elements; in particular the localisation $A_S$ of $A$ with
respect to $S$ exists.
\end{enumerate}
\end{prop}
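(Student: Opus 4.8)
The strategy is to reduce, via the structural decompositions already established, to the simple case $R \cong M_n(D)$, and then to the domain case $D[[x;\gamma]]$ where everything is transparent. First I would observe that (ii) follows from (i): we already know from Proposition \ref{SregularArt} that $\C_A(0) = \C_A(\Jac(R)_A)$ is a (left and right) Ore set consisting of regular elements, so identifying $S_l$ and $S_r$ with $\C_A(0)$ immediately transfers the Ore property and regularity to them. So the whole content is the chain of equalities in (i). The inclusion $S_l \subseteq \C_A(0)$ (and similarly for $S_r$) should be easy: if $s \in S_l$ is a zero-divisor, say $sc = 0$ with $c \neq 0$, then left multiplication by $c$ would embed a non-Noetherian-over-$R$ piece — more precisely one argues that $A/As$ being Noetherian over $R$ together with $A$ Noetherian (Lemma \ref{A-noeth}) forces $s$ regular, using that $A$ is its own quotient ring modulo the prime radical and \eqref{small}. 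So the real work is the reverse inclusion $\C_A(0) \subseteq S_l$, i.e.\ showing that every regular element $s \in A$ generates an $R$-cofinite left ideal $As$.

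For that reverse inclusion I would invoke property $PC_l$. The plan is to verify $PC_l(R \subseteq A)$ by the structural reductions: by Lemma \ref{PC-product} and Lemma \ref{skewpower-product} (the product decomposition $R = C_1 \times \cdots \times C_\ell$ into $\sigma$-orbit blocks, giving $A = \prod C_j[[t;\sigma_j]]$), it suffices to treat the cyclic case. In the cyclic case, by Proposition \ref{finiteextension} and Remark \ref{free}, $A = R[[t;\sigma]]$ is free of finite rank over a subring $A_0 \cong R_1[[x;\phi_0]] \times \cdots \times R_1[[x;\phi_0]]$, and then by Corollary \ref{matrix-skew} each factor $R_1[[x;\phi_0]] \cong M_{n_1}(D_1)[[x;\gamma]] \cong M_{n_1}(D_1[[x;\gamma]])$. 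Using Lemma \ref{PC-functorial}(ii) for the ring homomorphism $R \to A_0 \to A$ with $A_0$ finite over $R$ (hence Noetherian as an $R$-module since $R$ is Artinian), the property $PC_l(R \subseteq A)$ is equivalent to $PC_l(A_0 \to A)$; and since $A$ is free of finite rank as an $A_0$-module, a further application of Lemma \ref{PC-functorial}(i) reduces $PC_l(A_0 \to A)$ to $PC_l(\id_{A_0})$, i.e.\ to the assertion that every cofinite left ideal of $A_0$ contains a principal cofinite one. By Lemma \ref{PC-product} again this reduces to a single factor $M_{n_1}(D_1[[x;\gamma]])$, hence by Morita equivalence (or \cite[prop.\ 3.4.10]{mc-rob}) to the domain $\Lambda := D_1[[x;\gamma]]$ itself.

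The heart of the argument is therefore: in $\Lambda = D[[x;\gamma]]$, every left ideal of finite $D$-colength contains a principal ideal of finite colength. But Proposition \ref{PLI} tells us that every left ideal of $\Lambda$ is of the form $\Lambda x^n$, which is already principal, and $\Lambda/\Lambda x^n$ is a free $D$-module of rank $n$, hence Noetherian (even finite-dimensional) over $D$ — and thus over $R$ after tracing back through the finite extensions. So $PC_l$ holds trivially at the bottom. This yields $PC_l(R \subseteq A)$, whence $S_l$ is a left Ore set by Proposition \ref{pc-ore}, and by the criterion proved earlier (Proposition following \eqref{nil-jac}, relating Noetherian-over-$R$ to $S_l$-torsion for Noetherian $A$-modules, together with $\N(A) = \Jac(R)_A$) one gets $S_l = \C_A(0)$; the right-hand versions follow by the symmetric arguments that the text has reserved. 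The main obstacle I anticipate is bookkeeping the various functoriality reductions cleanly — in particular making sure that the equivalence of Noetherianness over $R$, over $A_0$, and over the bottom domain is correctly propagated through both the finite ring extensions and the Morita step, and that the passage between "principally cofinite" at each level is genuinely given by Lemmas \ref{PC-product} and \ref{PC-functorial} without a gap. Once $PC_l$ is in hand, identifying $S_l$ with $\C_A(0)$ is comparatively formal given \eqref{small} and \eqref{nil-jac}.
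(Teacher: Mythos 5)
Your structural roadmap (Wedderburn decomposition, reduction to the cyclic case via Lemmas \ref{PC-product} and \ref{skewpower-product}, passage to $D[[x;\gamma]]$ using Proposition \ref{finiteextension}, Remark \ref{free} and Corollary \ref{matrix-skew}, and the PID structure from Proposition \ref{PLI}) is exactly the one the paper uses, and your verification of $PC_l(R\subseteq A)$ is essentially the paper's Lemma \ref{PC-SOre}. That correctly yields the Ore property of $S_l$ and $S_r$ via Proposition \ref{pc-ore}. But there is a genuine gap in your proof of part (i): you claim that once $PC_l$ is in hand, the identity $S_l = \C_A(0)$ is ``comparatively formal given \eqref{small} and \eqref{nil-jac}.'' This does not follow. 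The property $PC_l$ only says that every $R$-cofinite left ideal contains a principal $R$-cofinite one; it says nothing about whether each regular element $f$ generates an $R$-cofinite ideal $Af$. Moreover, in the semisimple case $\eqref{small}$ and $\eqref{nil-jac}$ are vacuous, since $\N(A)=\Jac(R)_A=0$, so they give no leverage. The missing ingredient is precisely the paper's Lemma \ref{regular-cofinite}(i): in the cyclic case, a left regular $f$ gives an injective $D[[x;\gamma]]$-linear endomorphism of the finite free module $A$, so $A/Af$ is a finitely generated torsion $D[[x;\gamma]]$-module, hence of finite length by \cite[lemma 5.7.4]{mc-rob}, hence finitely generated over $D$ and thus over $R$.

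Your ``easy'' direction $S_l\subseteq\C_A(0)$ also needs repair. If $sc=0$ it is \emph{right} multiplication by $c$ that kills $As$ and hence induces a surjection $A/As\twoheadrightarrow Ac$, from which one concludes that $Ac$ is finitely generated over $R$ and therefore (after the cyclic reduction) over $D$; one must then invoke Lemma \ref{torsion} to see $Ac$ is $D[[x;\gamma]]$-torsion and the domain property from Proposition \ref{PLI} to see it embeds in a torsion-free module, forcing $c=0$. This is not a formal consequence of Small's theorem and it uses the same structural facts as the hard direction — the two inclusions are genuinely symmetric in difficulty. In short: the $PC_l$ package gives you that $S$ is Ore (one of the two routes the paper mentions for (ii)), but claim (i) requires the separate equivalence argument of the paper's Lemma \ref{regular-cofinite}, which your plan does not supply and cannot be extracted from $PC_l$ alone.
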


\begin{proof}The first claim will follow from Lemma \ref{regular-cofinite}
below. The second claim   is a consequence either of (i)
and Proposition \ref{SregularArt} or of Proposition \ref{pc-ore} and
Lemma \ref{PC-SOre} below.
%An alternative proof can be outlined as follows: To verify $PC_l(R,A)$ use
% the compatibility with products to reduce to the cyclic case. There use Remark
% \ref{free}  and Lemma \ref{PC-functorial} (i) to reduce the proof to the case where $R=D$ is a skewfield. Then $A=D[[x;\gamma]]$ is a left principal ideal ring by \ref{PLI} and thus $PC_l(R,A)$ holds trivially.
\end{proof}

\begin{lem}\label{regular-cofinite}   If $R$ is semisimple then,
for any $f \in A$,  the following implications hold:
\begin{enumerate}
\item If $f $ is  left  (right) regular, then $Af$ $(fA)$ is $R$-cofinite.
\item If $Af$  $(fA)$ is $R$-cofinite, then $f$ is  right (left) regular.
\end{enumerate}
In particular, the following statements are all equivalent: \begin{enumerate} \item[(a)]$f$ is
regular,
\item[(b)]$f$ is left regular, \item[(c)]$f$ is right regular, \item[(d)]  $Af$ is $R$-cofinite, \item[(e)]  $fA$ is $R$-cofinite.
\end{enumerate}
\end{lem}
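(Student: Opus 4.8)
The plan is to prove that for every $f\in A$ the five conditions (a)--(e) are all equivalent to the single condition
\[
(\ast)\qquad f\text{ is a unit in }B=R((t;\sigma))=A_T,\qquad T=\{1,t,t^2,\ldots\}.
\]
Once this is established, assertion (i) is just the implication (b)$\Rightarrow$(d) (resp.\ (c)$\Rightarrow$(e)) and assertion (ii) is (d)$\Rightarrow$(c) (resp.\ (e)$\Rightarrow$(b)). Throughout I would use that $\delta=0$ here, that $B$ is semisimple Artinian (so in particular every one-sided regular element of $B$ is already a unit), that $A\subseteq B$, and that by Proposition~\ref{skewLaurent} every element of $B$ has the form $t^{-m}a$ with $a\in A$ and $m\ge 0$.

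First I would dispose of the easy directions. Localising at the regular Ore set $T$ preserves one-sided regularity: if $f\beta=0$ with $\beta=t^{-m}a$ ($a\in A$), then $(t^m f t^{-m})a=0$; since $t$ is a normal element of $A$ one has $t^m f t^{-m}=\sigma^m(f)\in A$, where $\sigma$ is extended to $A$ by fixing $t$, and this is left regular in $A$ because $f$ is, so $a=0$ and $\beta=0$. Combined with the fact that in the Artinian ring $B$ a one-sided regular element is a unit, this yields (b)$\Rightarrow(\ast)$ and (c)$\Rightarrow(\ast)$; conversely a unit of $B$ is regular in $B$, hence in $A$, so $(\ast)\Rightarrow$(a)$\Rightarrow$(b),(c). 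For $(\ast)\Rightarrow$(d): writing $f^{-1}=t^{-N}a$ gives $af=t^N$, hence $At^N=t^NA\subseteq Af$ and $A/Af$ is a quotient of the left $R$-module $A/At^N\cong R^N$, so it is finitely generated over $R$; symmetrically $(\ast)\Rightarrow$(e).

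The substantial implication is (d)$\Rightarrow(\ast)$ (and (e)$\Rightarrow(\ast)$ by symmetry). Put $N:=A/Af$; by (d) it has finite length over the Artinian ring $R$, hence over $A$. As $t$ is normal in $A$ each $t^iN$ is an $A$-submodule, so the descending chain $N\supseteq tN\supseteq t^2N\supseteq\cdots$ stabilises, say at $N_\infty:=\bigcap_i t^iN$, and $tN_\infty=N_\infty$. Left multiplication by $t$ is $\sigma$-semilinear, and for a $\sigma$-semilinear endomorphism of a finite length $R$-module the rank--nullity identity holds (its kernel and image are $R$-submodules and $M/\ker\xrightarrow{\ \sim\ }\mathrm{im}$ is a $\sigma$-semilinear isomorphism, hence length-preserving); thus $\ell(N_\infty)=\ell(\ker(t|_{N_\infty}))+\ell(N_\infty)$, which forces $t$ to be injective, hence bijective, on $N_\infty$. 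Then $N_\infty$ is a $B$-module that is finitely generated over $R$, so $N_\infty=0$ by the Key Fact below, i.e.\ $t^{n_0}A\subseteq Af$ for some $n_0$; writing $t^{n_0}=af$ with $a\in A$ shows that $(t^{n_0})^{-1}a$ is a left inverse of $f$ in $B$, and in the Artinian ring $B$ this makes $f$ a unit, which is $(\ast)$.

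It remains to prove the Key Fact: \emph{no nonzero $B$-module is finitely generated over $R$}. Such a module would have finite length over $R$, hence over $B$, hence a simple $B$-quotient still finitely generated over $R$, so it suffices to treat simple $B$-modules. Using Lemma~\ref{skewpower-product} (with $\delta=0$) and the decomposition of $(R,\sigma)$ into cyclic pieces, a simple $B$-module is a simple module over $C_j((t;\sigma_j))$ for one cyclic piece $(C_j,\sigma_j)$, with its $R$-structure factoring through $C_j$; so one may assume $(R,\sigma)$ is a single cyclic piece. Then by Propositions~\ref{finiteextension} and~\ref{inner} together with Remark~\ref{free}, the ring $A$ --- and hence $B$, obtained from $A$ by inverting $t$ (equivalently $x=t^n$) --- is module-finite and free over a copy of $D[[x;\gamma]]$, resp.\ of $D((x;\gamma))$, for a skew field $D$, and moreover $R$ is module-finite over the corresponding diagonal copy of $D$. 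By Proposition~\ref{PLI} the ring $D((x;\gamma))$ is a skew field, and it is infinite-dimensional over $D$ since the powers of $x$ are left $D$-linearly independent. A simple $B$-module $S$ is cyclic over $B$, hence finitely generated, hence nonzero and finite-dimensional, over $D((x;\gamma))$; therefore $\dim_D S=\infty$, so $S$ cannot be finitely generated over $R$. I expect the Key Fact to be the main obstacle, and within it the bookkeeping in the reduction to skew-field coefficients: one must track the ring embedding $D[[x;\gamma]]\hookrightarrow A$ and the module-finiteness of $R$ over the diagonal $D$ through the decomposition into cyclic pieces and the identifications of Propositions~\ref{finiteextension} and~\ref{inner}; once that is in place the dimension count is routine and the rest is formal.
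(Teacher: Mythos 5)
Your argument is correct, and it reaches the same conclusions as the paper, but by a genuinely different route. The paper proves (i) and (ii) separately and directly: for (i) it observes that if $f$ is left regular then $A/Af$ is a finitely generated torsion $D[[x;\gamma]]$-module, invokes \cite[lem.\ 5.7.4]{mc-rob} to get finite length, and concludes $R$-cofiniteness from the fact that $D$ is the unique simple $D[[x;\gamma]]$-module; for (ii) it takes $g$ with $fg=0$, notes that $Ag$ is then a quotient of $A/Af$ hence finitely generated over $D$, applies Lemma~\ref{torsion} to deduce that $Ag$ is $D[[x;\gamma]]$-torsion, and contradicts this against the torsion-freeness of $A$ over the domain $D[[x;\gamma]]$. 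You instead funnel everything through the criterion ``$f$ is a unit in $B=A_T$''. The easy implications are clean. Your hard step, (d)$\Rightarrow(\ast)$, replaces the zero-divisor argument by a Fitting-type argument: the stable image $N_\infty=\bigcap t^i(A/Af)$ is a $B$-module of finite $R$-length (using a semilinear rank--nullity count), and your ``Key Fact'' --- no nonzero $B$-module is finitely generated over $R$ --- forces $N_\infty=0$, hence $t^{n_0}\in Af$, hence $f$ is left-invertible and so a unit in the Artinian ring $B$. The Key Fact itself is proved by the same reduction-to-cyclic-case machinery the paper uses (Lemmas~\ref{skewpower-product} and \ref{PC-product}, Propositions~\ref{finiteextension}, \ref{inner}, Remark~\ref{free}, Proposition~\ref{PLI}), only pushed one step further: $D((x;\gamma))$ is a skew field of infinite $D$-dimension, so no nonzero finitely generated $D((x;\gamma))$-module can be finite over $D$. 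In effect your Key Fact is the localised avatar of the paper's Lemma~\ref{torsion}. What your version buys is a single unifying invertibility criterion and a statement (the Key Fact) of some independent interest; what the paper's version buys is brevity --- it never needs the Fitting argument or the explicit passage to Laurent series at this point, using only the power series ring $D[[x;\gamma]]$.
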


\begin{proof}
First note that all properties involved in this lemma behave well under finite products
$(R,\sigma)=(R_1,\sigma_1)\times \ldots \times (R_m,\sigma_m),$ e.g.\ $f\in A$ being right regular
is equivalent to $f_i=e_if$ being right regular in $A_i=e_iA$ for all $i=1,\ldots,m.$ Indeed, if
one $f_{i_0}$ were a right zero divisor, say $f_{i_0}g_{i_0}=0$ for some $0\neq g_{i_0}\in A_i$ ,
then $f$ were a right zero divisor, too, as $f\cdot (0,\ldots,0,g_{i_0},0,\ldots,0)=0;$ the other
direction is trivial.
  The compatibility of  cofiniteness   with
products was shown during the proof of Lemma \ref{PC-product}. \\
Thus, in order to prove (i) we may and do assume   that
we are in  the cyclic case. Let $f\in A$ be   left regular, i.e.\
the left $A$-module map $A\to A, a\mapsto af,$ is injective. Thus
its cokernel $A/ Af$ is a finitely generated (left)
$D[[x;\gamma]]$-torsion module; here we use again Remark \ref{free}.
By \cite[lemma 5.7.4]{mc-rob} $A/Af$ has finite length
  over $D[[x;\gamma]]$  and thus, $D$ being the unique
simple $D[[x;\gamma]]$-module, $A/Af$ is finitely generated over
$D.$ To prove (ii) assume that $Af$ is $R$-cofinite and  that $g\in
A$ satisfies $fg=0.$ Multiplication by $g$ on the right induces a
surjective map $A/Af\twoheadrightarrow Ag$ of $A$-modules, which
shows that $Ag$ is finitely generated over $R$ and hence over $D.$
By Lemma \ref{torsion} below it follows that $Ag$ is a
$D[[x;\gamma]]$-torsion module. On the other hand $Ag$ is a
submodule of the free (Remark \ref{free}) and hence torsionfree
$D[[x;\gamma]]$-module $A,$ because $D[[x;\gamma]]$ is
a domain  by Proposition \ref{PLI}. Thus $g=0$ which shows that $f$
is right regular. The rest of the lemma follows by symmetry.
\end{proof}

\begin{lem}
\label{torsion} Let $D$ be a skew field,   $\gamma$ any automorphism of $D$ and $N$ a
$D[[x;\gamma]]$-module which, considered as $D$-module via the natural inclusion $D\hookrightarrow
D[[x;\gamma]],$ is finitely generated. Then $N$ is a torsion $D[[x;\gamma]]$-module.
\end{lem}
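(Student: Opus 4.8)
The plan is to show that $N$ is finitely generated over $D[[x;\gamma]]$ (which is immediate, since it is already finitely generated over the subring $D$) and then exploit that $D[[x;\gamma]]$ is local with maximal ideal $D[[x;\gamma]]x$, whose residue field is $D$. First I would pick a finite generating set $v_1,\dots,v_r$ of $N$ as a $D$-module. For each index $j$, the element $xv_j$ again lies in $N$, so I can write $xv_j=\sum_{i=1}^r a_{ij}v_i$ with $a_{ij}\in D$; here one must be mildly careful about sides, but since $D\hookrightarrow D[[x;\gamma]]$ and left multiplication by $x$ on $N$ is additive and $D$-semilinear (via $\gamma$), the matrix $\Theta=(a_{ij})$ captures the action of $x$ on the chosen generators. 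In other words, writing $v=(v_1,\dots,v_r)^{\mathrm t}$, we have the relation $xv=\Theta v$ inside $N$.

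Next I would run the usual Cayley--Hamilton / determinant-trick argument, adapted to the skew setting. Consider the matrix $xI_r-\Theta$ over $D[[x;\gamma]]$ acting on the column $v$; by construction it annihilates $v$. Multiplying on the left by the adjugate matrix yields $\det(xI_r-\Theta)\cdot v_i=0$ for every $i$, hence $\det(xI_r-\Theta)$ annihilates all of $N$. (In the noncommutative setting one should use that all entries of $\Theta$ lie in the central-enough situation only modulo $x$, so rather than a naive determinant I would instead argue directly: reduce the relation $xv=\Theta v$ modulo $x$ to see that $\Theta v\equiv 0$ in $N/xN$, iterate to get $x^kv_i\in (\text{expressions in the }v_j\text{ with coefficients in }D[[x;\gamma]]x^{k})$, and use completeness.) The cleanest route is: set $P:=\det\nolimits_{\text{left}}(xI_r-\Theta)$, a monic polynomial in $x$ of degree $r$ over $D[[x;\gamma]]$ whose lower-order behaviour shows $P\equiv x^r\pmod{\text{units}\cdot x^{?}}$ — actually $P$ has the form $x^r+(\text{lower order terms in }x)$ and annihilates $N$; since $P\ne 0$, this exhibits $N$ as torsion.

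The only real subtlety — and the step I expect to be the main obstacle — is that $D$ is a skew field and $\gamma$ is nontrivial, so the determinant trick is not literally available: determinants over noncommutative rings are delicate. I would therefore avoid determinants entirely and argue by a direct filtration/completeness argument instead. Concretely: from $xv_j=\sum_i a_{ij}v_i$ one gets, for every $k\ge 1$, that $x^kv_j$ is a $D[[x;\gamma]]$-linear combination of $v_1,\dots,v_r$ with coefficients in $D[[x;\gamma]]x^{k-1}$ (by induction, pushing each extra factor of $x$ past the $D$-coefficients via $\gamma$ and re-expanding using the defining relations). Hence for the element $w:=x^rv_1$, say, one can solve the resulting linear system: the vector $(x^rv_1,\dots,x^rv_r)^{\mathrm t}$ equals $M v$ for some matrix $M$ over $D[[x;\gamma]]$ with entries in the maximal ideal, and $I_r-M$ is invertible over the local ring $D[[x;\gamma]]$ (its reduction mod $x$ is $I_r$). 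Applying $(I_r-M)^{-1}$ shows each $v_i$ is killed by... — more carefully, one shows the submodule generated by $v_1,\dots,v_r$ over $D[[x;\gamma]]$ equals the one generated over $D$, which forces $N$ to be finitely generated over the skew field $D$, i.e.\ finite-dimensional; but a finite-dimensional $D$-vector space carrying an injective $D[[x;\gamma]]$-action would make $x$ act as a $\gamma$-semilinear bijection, and there is no contradiction there directly — so instead one concludes torsion because any finitely generated module over the PLID $D[[x;\gamma]]$ (Proposition \ref{PLI}) that is also finite-dimensional over $D$ must be torsion, as a nonzero free summand $D[[x;\gamma]]$ is infinite-dimensional over $D$. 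This last observation, combined with the structure theory over the principal ideal domain $D[[x;\gamma]]$, finishes the proof cleanly and sidesteps all determinant issues.
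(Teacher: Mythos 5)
Your final argument is correct and does prove the lemma as stated, but it takes a genuinely different route from the paper's. The paper exploits that $D[[x;\gamma]]$ is a local ring with maximal ideal $(x)$ and residue (skew) field $D$, so that $D$ is its unique simple module; it then shows $N$ has finite length $n$ as a $D[[x;\gamma]]$-module (a descending chain of submodules with nonzero quotients would contribute positive $D$-dimension at each step, forcing $\dim_D N=\infty$), which gives the sharper conclusion that $N$ is annihilated by $x^n$. That sharper conclusion is actually used in the very next lemma (\ref{PC-SOre}), where one needs $N$ to be killed by a power of $x$, not merely torsion. Your approach — if $N$ were not torsion it would contain a copy of the left regular module $D[[x;\gamma]]$, which is infinite-dimensional over $D$ — is simpler and correct for the statement of Lemma \ref{torsion}, and in fact you do not need the full structure theorem for finitely generated modules over a noncommutative PID that you invoke: a single non-torsion element $v$ already gives an embedding $D[[x;\gamma]]\hookrightarrow N$, $a\mapsto av$, which contradicts $\dim_D N<\infty$. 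The rest of your write-up (the determinant trick, which you correctly distrust over a skew field, and the partially developed filtration argument) is unnecessary scaffolding; the last paragraph stands on its own. If you want your proof to feed into \ref{PC-SOre} without modification, you should extract the stronger statement the way the paper does.
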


\begin{proof}Since the unique simple $D[[x;\gamma]]$-module $D=D[[x;\gamma]]/(x)$  has annihilator ideal $(x),$ it
suffices to  show that $N$ has finite length, say $n,$ as $D[[x;\gamma]]$-module, because then $N$
is annihilated by $(x^n).$ Assume the contrary. Then the Noetherian $D[[x;\gamma]]$-module $N$ is
not Artinian, i.e.\ it exists a descending sequence of $D[[x;\gamma]]$-submodules $N=N_0\supseteqq
N_1 \supseteqq \ldots \supseteqq N_i \supseteqq \ldots$ with $\dim_D(N_i/N_{i+1})>0$ for all $i\geq
0.$ This implies that $\dim_D(N)$ is infinite, a contradiction.
\end{proof}

\begin{lem}\label{PC-SOre}
  For semisimple $R$ the properties $PC_l(R \subseteq A)$ and
$PC_r(R \subseteq A)$ both hold.
\end{lem}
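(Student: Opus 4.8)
The plan is to prove $PC_l(R\subseteq A)$ directly (the property $PC_r(R\subseteq A)$ then follows by the symmetric argument, using $rt=t\sigma^{-1}(r)$ and right multiplication); concretely, I will show that every $R$-cofinite left ideal $I$ of $A$ already contains a power $t^n$ of $t$. This suffices: since $\delta(R)\subseteq\Jac(R)=0$ for semisimple $R$ we have $\delta=0$, hence $A=R[[t;\sigma]]$, so $At^n=\prod_{j\ge n}Rt^j$ and $A/At^n\cong\bigoplus_{i=0}^{n-1}Rt^i\cong R^n$ is Noetherian over $R$; thus $t^n\in S_l$, while $At^n\subseteq I$ because $I$ is a left ideal. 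So $I$ contains the principal $R$-cofinite left ideal $At^n$, which is exactly what $PC_l$ demands. (Note that, perhaps surprisingly, the cyclic-case and Morita reductions developed above are not needed for this particular lemma.)

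Fix an $R$-cofinite left ideal $I$ and write $\bar x$ for the image of $x\in A$ in $A/I$. As $I$ is a left ideal, $tI\subseteq I$, so left multiplication by $t$ descends to $A/I$ and $\overline{t^{\,m+1}}=t\cdot\overline{t^{\,m}}$; in particular $A\overline{t^{\,m+1}}\subseteq A\overline{t^{\,m}}$, so the submodules $A\overline{t^{\,m}}$, $m\ge 0$, form a descending chain of left $A$-submodules --- a fortiori of $R$-submodules --- of $A/I$. By hypothesis $A/I$ is Noetherian over the semisimple ring $R$, hence of finite length over $R$, so this chain stabilises: there is an $n\ge 0$ with $M:=A\overline{t^{\,n}}=A\overline{t^{\,m}}$ for all $m\ge n$.

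The key point is that $At\subseteq\Jac(A)$. Indeed, any element of $At$ has vanishing constant term, hence so does $ax$ for every $a\in A$, and therefore $1+ax$ is a unit of the (pseudocompact, in particular complete) ring $A$, with inverse $\sum_{k\ge 0}(-ax)^k$. Using the stabilisation we obtain
\[ M \;=\; A\overline{t^{\,n+1}} \;=\; A\bigl(t\cdot\overline{t^{\,n}}\bigr) \;=\; (At)\overline{t^{\,n}} \;\subseteq\; (At)M \;\subseteq\; \Jac(A)\,M \;\subseteq\; M, \]
so $\Jac(A)M=M$. Since $M=A\overline{t^{\,n}}$ is a cyclic, hence finitely generated, left $A$-module, Nakayama's lemma gives $M=0$; thus $\overline{t^{\,n}}=0$, i.e.\ $t^{\,n}\in I$, as desired.

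I do not expect a real obstacle here: once one notices that stabilisation of the chain $A\overline{t^{\,m}}$ is literally the statement $\Jac(A)M=M$ for $M=A\overline{t^{\,n}}$, Nakayama closes the argument. Semisimplicity (equivalently, for us, $\delta=0$) enters only to ensure that $At$ is a two-sided ideal contained in $\Jac(A)$, and --- through finite length of $A/I$ over $R$ --- that the chain stabilises at all; with $\delta\neq 0$ the ideal $At$ need no longer be two-sided and this argument would break.
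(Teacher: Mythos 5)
Your argument is correct, and it proceeds by a route genuinely different from, and more elementary than, the paper's. The paper reduces to the cyclic case via Lemma~\ref{PC-product}, uses Remark~\ref{free} to exhibit $A$ as a finite free module over a power series ring $D[[x;\gamma]]$ over a skew field, and then combines the Weierstrass/PID description of ideals in $D[[x;\gamma]]$ (Proposition~\ref{PLI}) with Lemma~\ref{torsion} to conclude that any $R$-cofinite left ideal contains some $t^n$. You arrive at the same conclusion ($t^n\in I$) without any of the Wedderburn decomposition, crossed-product, or Weierstrass machinery: you observe that $At\subseteq\Jac(A)$ (because $1+At\subseteq A^\times$ via the convergent geometric series in the complete ring $A$, using that $R$ is discrete and $\delta=0$), that the chain $A\overline{t^m}$ inside the finite-length $R$-module $A/I$ stabilises, and that Nakayama then forces the stable value $M=A\overline{t^n}$ to vanish. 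Everything checks: $\delta=0$ since $\Jac(R)=0$, so $A/At^n\cong R^n$ confirms $t^n\in S_l$; the equality $A\overline{t^{n+1}}=(At)\overline{t^n}\subseteq\Jac(A)M$ is exact; and $M$ is cyclic, so the noncommutative Nakayama lemma applies. The trade-off is one of economy: the paper's cyclic and Morita reductions are reused elsewhere (e.g.\ in Lemma~\ref{regular-cofinite}), so they are not sunk costs, whereas your shorter argument serves only this lemma. One minor quibble with your closing remark: what actually fails for $\delta\neq 0$ is not that $At$ stops being two-sided (your Nakayama step never needs two-sidedness, only $At\subseteq\Jac(A)$), but rather that for non-semisimple $R$ a power of $t$ is the wrong target --- one then needs a genuine element of $S_l$, not $t^n$. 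That aside, your proof of the lemma as stated is sound.
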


\begin{proof}
  By Lemma \ref{PC-product} we may and do assume that we
are in the cyclic case.
%($R_1=M_m(D),$ $ R=R_1\times \cdots \times R_1$ $r$-times, $\sigma$ acts via
% $\sigma'$ on first factor, otherwise by identity + cyclic permutation).
Then $A$ is a free $D[[x,\gamma]]$-module of finite rank by Remark
\ref{free}. Now let $L$ be an $R$-cofinite left ideal of $A,$ i.e.\
$N:=A/L$ is finitely generated over $D$. As seen in the proof of
Lemma \ref{torsion} the module $N$ is annihilated by a power of $x$
and thus by some power of $t.$ In particular, $t^n\cdot 1\in L$ for
some $n\geq 0.$ It follows that $L$ contains the  $R$-cofinite
principal ideal $At^n.$ The right version is shown similarly.
\end{proof}

\subsection{The general case}

We put $\bar{R}:=R/\Jac(R),$   $\bar{A}:=A/\Jac(R)_A,$
$\bar{M}:=M/\Jac(R)M$  for a (left) $R$-module $M$ and we recall the
topological Nakayama lemma from \cite[lem.\ 4.9]{vdbergh}:

\begin{lem}\label{topNakayama}
Let $M$ be a pseudocompact $R$-module. Then $M$ is finitely generated over
$R$ if and only if $\bar{M}$ is   finitely generated over $\bar{R}.$
\end{lem}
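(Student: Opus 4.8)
The plan is to dispose of the easy implication and then reduce the hard one to a vanishing statement. For the ``only if'' direction there is nothing to do: $\bar M$ is a quotient of $M$, and a quotient of a finitely generated module is finitely generated. For the converse I would start by choosing $m_1,\dots,m_n\in M$ whose images generate $\bar M$ over $\bar R$; unwinding $\bar R=R/\Jac(R)$ and $\bar M=M/\Jac(R)M$ this says exactly that $M=\sum_{i=1}^n Rm_i+\Jac(R)M$. Writing $N:=\sum_{i=1}^n Rm_i$ for the image of the continuous homomorphism $R^n\to M$, $e_i\mapsto m_i$, one uses that $R^n$ is pseudocompact, hence linearly compact, so that $N$ --- a continuous image of a linearly compact module --- is itself linearly compact and therefore a \emph{closed} submodule of the Hausdorff module $M$. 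Then $Q:=M/N$ is again pseudocompact, and the relation above becomes $Q=\Jac(R)Q$; so everything comes down to the claim that a pseudocompact module $Q$ with $\Jac(R)Q=Q$ must vanish.

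To prove that claim I would invoke the defining structure of pseudocompact modules: write $Q\cong\projlim{j}Q/Q_j$, where the $Q_j$ form a basis of open submodules, each $Q/Q_j$ has finite length over $R$, and $\bigcap_j Q_j=0$. Fixing $j$ and reducing modulo $Q_j$ gives $\Jac(R)\,(Q/Q_j)=Q/Q_j$; since $Q/Q_j$ has a finite composition series whose simple subquotients are all annihilated by $\Jac(R)$, iterating $\ell:=\mathrm{length}(Q/Q_j)$ times yields $Q/Q_j=\Jac(R)^{\ell}(Q/Q_j)=0$. As this holds for every $j$ and the $Q_j$ separate points, $Q=0$, whence $M=N=\sum_{i=1}^n Rm_i$ is finitely generated over $R$. (Alternatively one may simply cite \cite[Lem.\ 4.9]{vdbergh}, from which the statement is quoted.)

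The only genuinely delicate point --- the place where I expect the argument to need real care rather than routine manipulation --- is the topological bookkeeping in the first step: one must know that the finitely generated submodule $N$ is automatically closed, so that $M/N$ carries a pseudocompact structure and the purely algebraic Nakayama argument on the finite-length quotients $Q/Q_j$ can be reassembled into a statement about $M$ itself. This is precisely the input supplied by linear compactness of pseudocompact modules; without it one would only conclude that $\sum_{i=1}^n Rm_i$ is \emph{dense} in $M$, which is strictly weaker than finite generation.
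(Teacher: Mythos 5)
Your proof is correct. Note that the paper itself gives no argument for this lemma at all --- it simply quotes the statement from \cite[lem.\ 4.9]{vdbergh} --- so there is nothing to compare step by step; what you have written is the standard proof of the topological Nakayama lemma, and it is sound. You have also correctly isolated the one non-formal ingredient: from $M=\sum_i Rm_i+\Jac(R)M$ alone one only gets density of $N=\sum_i Rm_i$, and it is the linear compactness of the pseudocompact module $R^n$ (hence of its continuous image $N$, which is therefore closed in the Hausdorff module $M$) that upgrades this to equality, by making $Q=M/N$ pseudocompact and reducing the vanishing of $Q$ to ordinary Nakayama on the finite-length quotients $Q/Q_j$, where $\Jac(R)^{\ell}(Q/Q_j)=0$ for $\ell$ the length. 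All the auxiliary facts you use (finite products and continuous images of linearly compact modules are linearly compact, linearly compact submodules of Hausdorff linearly topologized modules are closed, quotients by closed submodules remain pseudocompact) are standard, so the argument is complete.
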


\begin{lem}\label{reductionPC}
The property $PC_l(R \subseteq A)$ (respectively $PC_r(R \subseteq
A)$) holds if and only if $PC_l(\bar{R} \subseteq \bar{A})$
($PC_r(\bar{R} \subseteq \bar{A})$) holds.
\end{lem}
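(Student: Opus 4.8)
The plan is to reduce the statement about $PC_l(R\subseteq A)$ to the residue situation $PC_l(\bar R\subseteq\bar A)$ by exploiting the fact that $\Jac(R)_A = \ker(A\twoheadrightarrow\bar A)$ is (topologically) nilpotent in the relevant sense and that everything in sight is pseudocompact, so the topological Nakayama lemma \ref{topNakayama} applies. The key observation is that an ideal $L$ of $A$ is $R$-cofinite, i.e.\ $A/L$ is a Noetherian (equivalently, since $A$ is Noetherian by Lemma \ref{A-noeth} in the Artinian case and more generally by the Noetherianity results of \cite{sch-ven}, finitely generated) $R$-module, if and only if $A/L$ is finitely generated as an $R$-module; and by Lemma \ref{topNakayama} this holds if and only if $\overline{A/L} = (A/L)/\Jac(R)(A/L)$ is finitely generated over $\bar R$. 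So $R$-cofiniteness of a left ideal is detected after reduction mod $\Jac(R)$.

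The forward direction: assume $PC_l(R\subseteq A)$ and let $\bar L$ be an $\bar R$-cofinite left ideal of $\bar A$. Pull it back to $L := \pi^{-1}(\bar L)\subseteq A$ where $\pi\colon A\twoheadrightarrow\bar A$. Then $A/L\cong\bar A/\bar L$ is finitely generated over $\bar R$, hence $L$ is $R$-cofinite by the Nakayama argument above. By $PC_l(R\subseteq A)$ there is $s\in S_l(R\subseteq A)\cap L$, and one checks that $\bar s := \pi(s)$ lies in $S_l(\bar R\subseteq\bar A)\cap\bar L$: indeed $\bar A\bar s = \pi(As)$ is a quotient of $As$, which is a finitely generated $R$-module, so $\bar A\bar s$ is finitely generated over $R$ and hence over $\bar R$. (One should be a little careful here that $\bar A \bar s$ being $\bar R$-cofinite means finitely generated as $\bar R$-module — this is exactly the statement that the image of a Noetherian $R$-module is Noetherian over $\bar R$, which is immediate.) Thus $PC_l(\bar R\subseteq\bar A)$ holds.

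The converse is where the real content lies. Assume $PC_l(\bar R\subseteq\bar A)$ and let $L$ be an $R$-cofinite left ideal of $A$. Then $\bar L := \pi(L) = (L+\Jac(R)_A)/\Jac(R)_A$ is a left ideal of $\bar A$, and $\bar A/\bar L$ is a quotient of $A/L$, hence finitely generated over $R$ and therefore over $\bar R$; so $\bar L$ is $\bar R$-cofinite. By hypothesis there is $\bar s\in S_l(\bar R\subseteq\bar A)\cap\bar L$; lift it to any $s\in L$ with $\pi(s)=\bar s$. The claim is that $As$ is already $R$-cofinite, which will finish the proof since then $s\in S_l(R\subseteq A)\cap L$. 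Now $A/As$ is a pseudocompact $R$-module, so by Lemma \ref{topNakayama} it suffices to show $(A/As)/\Jac(R)(A/As)$ is finitely generated over $\bar R$. But $\Jac(R)(A/As)$ contains the image of $\Jac(R)_A$, so there is a surjection $(A/As)/\Jac(R)_A(A/As)\twoheadrightarrow (A/As)/\Jac(R)(A/As)$, and the left-hand side is $\bar A/\bar A\bar s$ (using $\Jac(R)_A\cdot A = \Jac(R)_A$ and that $\pi(As) = \bar A\bar s$), which is finitely generated over $\bar R$ because $\bar s\in S_l(\bar R\subseteq\bar A)$. Hence $A/As$ is finitely generated over $R$, i.e.\ $As$ is $R$-cofinite, and $As\subseteq L$ shows $PC_l(R\subseteq A)$.

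The main obstacle I expect is bookkeeping around the identification $\pi(As) = \bar A\,\pi(s)$ and the interplay of the two-sided ideal $\Jac(R)_A$ with left ideals — one needs $\Jac(R)_A\cdot(A/As) = \Jac(R)_A + As)/As$ and must know (from Lemma \ref{twosided-general}) that $\Jac(R)_A = \Jac(R)A = A\Jac(R)$ so that reduction mod $\Jac(R)_A$ is compatible with the left-module structure; and one must invoke pseudocompactness of $A$ and of its quotients (which holds since $A$ is a pseudocompact ring and $As$ is a closed left ideal, $As$ being finitely generated over the Noetherian pseudocompact ring $A$) to legitimately apply the topological Nakayama lemma. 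Once these compatibilities are in place the argument is formal. The ``right'' version is identical with left and right interchanged.
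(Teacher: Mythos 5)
Your proof is correct and follows essentially the same route as the paper's: reduce via the topological Nakayama Lemma \ref{topNakayama} to the equivalence that $Af$ is $R$-cofinite iff $\bar A\bar f$ is $\bar R$-cofinite, then lift the principal generator from $\pi(L)$ back into $L$ for the nontrivial direction. (One small inaccuracy: the equivalence between ``Noetherian over $R$'' and ``finitely generated over $R$'' rests on $R$ being Noetherian, not on $A$ being Noetherian as your parenthetical suggests.)
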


\begin{proof}
Note that for $f\in A$ the $R$-module $Af$ is pseudocompact being the epimorphic image of the
pseudocompact $R$-module $A$ under the continuous $R$-linear map $A\to A, a\mapsto af,$ thus also
the quotient $A/Af$ is a natural pseudocompact $R$-module. Hence it follows from the topological
Nakayama lemma \ref{topNakayama} that \be \label{Nakayama} A/Af \mbox{ is finitely generated over
}R \Leftrightarrow \bar{A}/\bar{A}\bar{f} \mbox{ is finitely generated over }\bar{R}.\ee Here
$\bar{f}$ denotes the image  of $f$ in $\bar{A}.$ Now assume that $PC_l(\bar{R} \subseteq \bar{A})$
holds and let $L$ be a $R$-cofinite left ideal of $A.$ Then $A/\Jac(R)+L$ is finitely generated
over $\bar{R}$ and thus there exists an element $f\in L$ such that $\bar{A}/\bar{A}\bar{f}$  is
finitely generated over $\bar{R}$ whence $Af$ is $R$-cofinite by the above equivalence. The
opposite implication is trivial and the right version follows as usual by symmetry.
\end{proof}

\begin{prop}
$\C_A(\Jac(R)_A)$ consists of regular elements.
\end{prop}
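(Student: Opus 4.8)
The plan is to reduce the claim that $\C_A(\Jac(R)_A)$ consists of regular elements to the Artinian (indeed semisimple) case that has already been handled. The key bridge is the reduction $A \twoheadrightarrow \bar A = A/\Jac(R)_A \cong \bar R[[t;\bar\sigma]]$, where $\bar R = R/\Jac(R)$ is semisimple since $R$ is Noetherian pseudocompact. First I would observe that $s \in \C_A(\Jac(R)_A)$ means precisely that $\bar s$ is regular in $\bar A$: indeed, $sa \in \Jac(R)_A$ or $as \in \Jac(R)_A$ for $a\in A$ translates, after passing to the quotient, into $\bar s\bar a = 0$ or $\bar a\bar s = 0$ in $\bar A$, so $s \in \C_A(\Jac(R)_A)$ if and only if $\bar s$ is left and right regular in $\bar A$. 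By Lemma \ref{regular-cofinite} applied to the semisimple ring $\bar R$ (or directly by Proposition \ref{C=S}), $\bar s$ being regular is equivalent to $\bar s \in \C_{\bar A}(0) = S_l(\bar R\subseteq \bar A) = S_r(\bar R\subseteq \bar A)$, i.e.\ to $\bar A/\bar A\bar s$ and $\bar A/\bar s\bar A$ being finitely generated (Noetherian) $\bar R$-modules.

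Next I would lift this cofiniteness statement back to $A$ using the topological Nakayama lemma \ref{topNakayama}, exactly as in the proof of Lemma \ref{reductionPC}: since $A$ is pseudocompact, $A/As$ is a pseudocompact $R$-module, and by \eqref{Nakayama} it is finitely generated over $R$ if and only if $\bar A/\bar A\bar s$ is finitely generated over $\bar R$; similarly for $A/sA$. Hence $s\in \C_A(\Jac(R)_A)$ implies that both $As$ and $sA$ are $R$-cofinite, i.e.\ $s \in S_l(R\subseteq A) \cap S_r(R\subseteq A)$. The final step is to deduce regularity of $s$ in $A$ from the cofiniteness of $As$ and $sA$. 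For this I would argue that if $sg = 0$ for some $0 \neq g \in A$, then multiplication by $g$ on the right gives a surjection $A/As \twoheadrightarrow sA\cdot{}$... more precisely $A/As \twoheadrightarrow Ag$ is not quite right since $s$ acts on the left; instead, from $As$ being $R$-cofinite one should derive, as in the proof of Lemma \ref{regular-cofinite}(ii), that a right zero of $s$ must vanish, and symmetrically that a left zero of $s$ must vanish. Concretely, if $sg = 0$ then right multiplication by $g$ factors $A \to A$ through $A/As$ (wait — it factors through $A/sA$ on the correct side), yielding a finitely generated $R$-module image; combined with the flatness/torsion-freeness facts for the skew power series ring over $A$ one concludes $g = 0$.

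The main obstacle I anticipate is the last step: unlike in the semisimple case of Lemma \ref{regular-cofinite}, for general $R$ we do not have a convenient principal-ideal-domain base ring $D[[x;\gamma]]$ over which $A$ is free, so the torsion-freeness argument used there is not directly available. The cleanest route around this is probably to observe that regularity can itself be checked modulo $\Jac(R)_A$ together with a nilpotence/separatedness argument: more honestly, I would check whether it suffices to combine $s\in S_l\cap S_r$ with an already-established general statement — but since $R$ need not be Artinian here, I expect the argument to instead run as follows. One shows directly that if $\bar s$ is regular in $\bar A$ and $sg = 0$, then $\bar s\bar g = 0$ forces $g \in \Jac(R)_A$, and then iterating (using that $\bar s$ is regular and that $\Jac(R)_A/\Jac(R)_A^2$ etc.\ or rather the filtration $J_\bullet\cap A$ is separated on $A$ by Lemma \ref{A-noeth}-type arguments, $A$ being Noetherian and pseudocompact) pushes $g$ into $\bigcap_k (\Jac(R)_A)^k$-type intersection, which is $0$. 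So the real work is a careful successive-approximation argument showing that a left (or right) zero divisor of $s$, reduced modulo the $\Jac(R)_A$-adic-type filtration, stays a zero divisor of $\bar s$ and hence is forced to climb arbitrarily high in the filtration, contradicting separatedness. I would structure the proof: (1) identify $\C_A(\Jac(R)_A)$ with elements reducing to regular elements of $\bar A$; (2) invoke the semisimple case to get $R$-cofiniteness of $\bar A/\bar A\bar s$ and $\bar A/\bar s\bar A$; (3) lift cofiniteness via topological Nakayama; (4) run the successive-approximation argument to conclude regularity, this being the delicate point.
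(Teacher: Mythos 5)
Your high-level plan is the right one: reduce modulo (powers of) $\Jac(R)_A$, use the already-proved case for the Artinian quotients, and finish with separatedness of the $\Jac(R)_A$-adic filtration. But there is a genuine gap at the step you yourself flag as ``the delicate point,'' and there is also some superfluous machinery.

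The gap is in the iteration. Knowing only that $\bar s$ is regular in $\bar A = A/\Jac(R)_A$ gets you from $sg=0$ to $g\in\Jac(R)_A$, but it does \emph{not} directly get you from $g\in\Jac(R)_A$ to $g\in\Jac(R)_A^2$: for that you would have to know that left multiplication by $\bar s$ is injective on the $\bar A$-bimodule $\Jac(R)_A/\Jac(R)_A^2$, which does not follow formally from regularity of $\bar s$ in $\bar A$ (it can be argued using projectivity of $\Jac(R)/\Jac(R)^2$ over the semisimple $\bar R$, but you would have to say so and it is fiddly). The paper sidesteps this entirely: for each fixed $n$ it reduces modulo $\Jac(R)_A^n$ \emph{in one step}. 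Writing $R_n:=R/\Jac(R)^n$ and $A_n:=A/\Jac(R)_A^n$, the image of $s$ in $A_n$ lies in $\C_{A_n}(\Jac(R_n)_{A_n})$ (since $A_n/\Jac(R_n)_{A_n}=A/\Jac(R)_A$), and now Proposition~\ref{SregularArt} --- applied to the \emph{Artinian} ring $R_n$, not just the semisimple $\bar R$ --- says $\C_{A_n}(\Jac(R_n)_{A_n})=\C_{A_n}(0)$. So the image of $s$ is regular in $A_n$ and $g\in\Jac(R)_A^n$, for every $n$; then $\bigcap_n\Jac(R)_A^n=0$ finishes it. The ingredient you need to name explicitly is thus the Artinian case of the statement (Proposition~\ref{SregularArt}), not merely the semisimple case.

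Separately, your steps (2)--(3) (cofiniteness of $\bar A/\bar A\bar s$ via Lemma~\ref{regular-cofinite}, then lifting cofiniteness by topological Nakayama via Lemma~\ref{reductionPC}) are not needed for this proposition; cofiniteness and the Nakayama lift are what the paper uses to prove the Ore property in Theorem~\ref{main-Ore}, not regularity. For regularity alone, the only inputs are Proposition~\ref{SregularArt} for each $R/\Jac(R)^n$ and the fact that $\bigcap_n\Jac(R)^n=0$ for the Noetherian pseudocompact $R$ (hence $\bigcap_n\Jac(R)_A^n=0$).
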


\begin{proof}
Let $a$ be in $\C_A(\Jac(R)_A)$ and assume that $ab=0$ for some
$b\in A.$ Then we have also $\bar{a}\bar{b}=0$ for the images
$\bar{a}$ and $\bar{b}$ of $a$ and $b$ in
$A_n:=A/\Jac(R)^n_A\supseteqq R_n:=R/\Jac(R)^n.$ Since by
Proposition \ref{SregularArt} $\bar{a}\in\C_{A_n}(
\Jac(R_n)_{A_n})=\C_{A_n}(0)$ is regular, $b$ must belong to
$\Jac(R)^n_A$ for all $n\geq0.$ But it is easily seen from the
definition of $(-)_A$ and the fact that the Noetherian pseudocompact
ring $R$ is Hausdorff with respect to the $\Jac(R)$-adic topology
(cf.\ \cite[rem.\ 0.1 i]{sch-ven}) that
\[\bigcap \Jac(R)^n_A\subseteq \Big(\bigcap\Jac(R)^n\Big)_A=0. \] Thus $b=0$ and $a$ is right regular. By symmetry we obtain also left regularity.
\end{proof}

\begin{thm}\label{main-Ore}
\begin{enumerate}
\item $\C_A(\Jac(R)_A)=S_l(R \subseteq A)=S_r(R \subseteq A)=:S$.
\item The set $S$ is an (left and right) Ore set of $A$ consisting of
regular elements; in particular the localisation $A_S$ of $A$ with
respect to $S$ exists.
\end{enumerate}
\end{thm}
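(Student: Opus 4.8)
The plan is to reduce the general (pseudocompact Noetherian) case to the semisimple case already handled in Proposition \ref{C=S}, using the machinery of subsection 2.1 together with the reduction lemmas \ref{reductionPC} and \ref{topNakayama}. First I would establish property $PC_l(R\subseteq A)$ and $PC_r(R\subseteq A)$: by Lemma \ref{reductionPC} it suffices to verify $PC_l(\bar R\subseteq\bar A)$ and $PC_r(\bar R\subseteq\bar A)$, where $\bar R=R/\Jac(R)$ and $\bar A=A/\Jac(R)_A$. Now $\bar A\cong\bar R[[t;\bar\sigma]]$ is a skew power series ring over the \emph{semisimple} ring $\bar R$ (note $\delta$ dies modulo $\Jac(R)$ since $\delta(R)\subseteq\Jac(R)$), so $PC_l(\bar R\subseteq\bar A)$ is exactly Lemma \ref{PC-SOre}. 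Hence $PC_l(R\subseteq A)$ holds, and by Proposition \ref{pc-ore} the set $S_l:=S_l(R\subseteq A)$ is a left Ore set of $A$; symmetrically $S_r$ is a right Ore set.

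Next I would prove the identity $\C_A(\Jac(R)_A)=S_l=S_r$ of assertion (i). The inclusion showing that elements of $\C_A(\Jac(R)_A)$ are regular is the previous Proposition, so these are genuinely regular elements. For the equality, consider $s\in A$. I would use the functoriality of the construction under $R\to R_n:=R/\Jac(R)^n$ and $A\to A_n:=A/\Jac(R)^n_A$: one has $A_n=A\otimes_R R_n$, and by the topological Nakayama lemma \ref{topNakayama}, $As$ is $R$-cofinite if and only if $A_n s_n$ is $R_n$-cofinite for the image $s_n$ of $s$, which by the Artinian case (Proposition \ref{SregularArt} together with Lemma \ref{regular-cofinite} applied to the semisimple quotient $R_n/\Jac(R_n)$, and then the reduction \ref{reductionPC}/\ref{topNakayama} back up to $R_n$) is equivalent to $s_n\in\C_{A_n}(\Jac(R_n)_{A_n})=\C_{A_n}(0)$. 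Passing to the limit over $n$, and using that $\bigcap\Jac(R)^n_A=0$ (established in the proof of the preceding Proposition), this says precisely $s\in\C_A(\Jac(R)_A)$. The same argument with right ideals gives $S_r=\C_A(\Jac(R)_A)$ as well, so all three sets coincide; call the common set $S$.

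Finally, assertion (ii) follows formally: since $S=S_l$ is a left Ore set and $S=S_r$ is a right Ore set and $S$ consists of regular elements, the two-sided localisation $A_S$ exists by \cite[thm.\ 2.1.12]{mc-rob}. I expect the main obstacle to be the bookkeeping in the second paragraph --- making precise how cofiniteness over $R$ relates to cofiniteness over each $R_n$ and how the Artinian-case statement \ref{C=S} (proved for semisimple $R$) transfers to the merely Artinian rings $R_n$ via the reduction lemmas, and then correctly taking the inverse limit so that the characterisation of $S$ survives. The Ore property itself is then cheap, having been isolated into the abstract framework of subsection 2.1; the real content is that the relevant property $PC$ descends to the semisimple residue ring where everything is governed by the principal-ideal-domain structure of $D[[x;\gamma]]$.
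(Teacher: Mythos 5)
Your proposal is correct and follows essentially the same route as the paper: reduce to the semisimple quotient $\bar R=R/\Jac(R)$ via the topological Nakayama lemma and the $PC$-machinery, and invoke the semisimple results (Proposition \ref{C=S}, Lemma \ref{regular-cofinite}, Lemma \ref{PC-SOre}) there. The one superfluity is your detour through the Artinian quotients $R_n=R/\Jac(R)^n$ and the subsequent ``passing to the limit'': since $\C_{A_n}(\Jac(R_n)_{A_n})=\C_{A_n}(0)$ identifies, for every $n$, with the preimage of the regular elements of $\bar A$, the condition you want to take a limit of is $n$-independent and already coincides with $\C_A(\Jac(R)_A)$; the paper therefore works directly at $n=1$ and never needs $\bigcap_n\Jac(R)^n_A=0$ at this point (that intersection argument belongs to the regularity statement, proved separately just before the theorem).
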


\begin{proof} We set as before $\bar{R}=R/\Jac(R)$ and
$\bar{A}=A/\Jac(R)_A\cong \bar{R}[[t;\bar{\sigma}]]$ with
$\bar{\sigma}$ the automorphism of $\bar{R}$ induced by $\sigma.$ By
Proposition \ref{C=S} we know that $\C_{\bar{A}}(0)=S_l(\bar{R}
\subseteq \bar{A})=S_r(\bar{R} \subseteq \bar{A})=:\bar{S}.$ Since
$\C_A(\Jac(R)_A)$ is the full preimage of $\C_{\bar{A}}(0)$ and
since, by \eqref{Nakayama}, $S_l( {R} \subseteq {A})$ and $ S_r( {R}
\subseteq {A})$ are the full preimages of $S_l(\bar{R} \subseteq
\bar{A})$ and $S_r(\bar{R} \subseteq \bar{A}),$ respectively, under
the canonical projection $A\twoheadrightarrow \bar{A}$ the first
claim follows and $S$ is the full preimage of $\bar{S}.$ Now $S$ is
an Ore set since the conditions  $PC_l(R \subseteq A)$   and $PC_r(R
\subseteq A)$ are satisfied by Lemmata \ref{reductionPC}
  and  \ref{PC-SOre}.
\end{proof}

  For the sake of completeness we explicitly state the
following

\begin{prop}
 \begin{enumerate}
   \item $\Jac(B) = \Jac(R)_B$.
   \item $B/\Jac(B)$ is the quotient ring of $A/\Jac(R)_A$.
   \item $\Jac(A_S)=\Jac(R)A_S$.
 \end{enumerate}
\end{prop}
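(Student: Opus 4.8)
The plan is to derive all three statements from the semisimple case settled in Section~2.2 by reducing modulo $\Jac(R)$, noting that $\bar R:=R/\Jac(R)$ is a semisimple Artinian ring here: since (Remark~\ref{Jac}) the assumption $(\mathrm{SI}_0)$ holds for the $\Jac(R)$-adic filtration, $\Jac(R)$ is open, so $\bar R$ is a quotient of $R$ by an open ideal, hence Artinian, and it is semiprimitive, hence semisimple.

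For (i) I would prove the two inclusions separately. For $\Jac(R)_B\subseteq\Jac(B)$: by Lemma~\ref{twosided-general}, $\Jac(R)_B=J_1$ is a two-sided ideal and $B$ is a filtered ring for $J_\bullet$, and for the $\Jac(R)$-adic choice $J_n=(\Jac(R)^n)_B=F^nB$; hence for $x\in\Jac(R)_B$ and any $b\in B$ one has $(bx)^n\in J_1^n\subseteq J_n$, which tends to $0$ in the strong topology, so $\sum_{n\ge0}(bx)^n$ converges in the complete topological ring $B$ to a two-sided inverse of $1-bx$. As $\Jac(R)_B$ is two-sided this gives $\Jac(R)_B\subseteq\Jac(B)$. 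For the converse, the isomorphism \eqref{BmodI} with $k=1$ (where $\bar\delta=0$ because $\delta(R)\subseteq\Jac(R)$), which is a ring isomorphism by Remark~\ref{Bprolimes}, identifies $B/\Jac(R)_B$ with $\bar R((t;\bar\sigma))$; since $\bar R$ is semisimple, the Proposition of Section~2.2 asserting semisimplicity of $R((t;\sigma,\delta))$ for semisimple $R$ shows $\bar R((t;\bar\sigma))$ is semisimple, so its Jacobson radical vanishes, and the image of $\Jac(B)$ there is zero, i.e.\ $\Jac(B)\subseteq\Jac(R)_B$.

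Part (ii) is then a reformulation of what was proved in Section~2.2. By (i) and the identification just used, $B/\Jac(B)\cong\bar R((t;\bar\sigma))$; on the other hand $A/\Jac(R)_A\cong\bar R[[t;\bar\sigma]]=:\bar A$ by the description of $\Jac(R)_A$ as the kernel of the canonical projection, and the inclusion $A\hookrightarrow B$ induces the standard inclusion $\bar A\hookrightarrow\bar R((t;\bar\sigma))$ (note $A\cap\Jac(R)_B=\Jac(R)_A$). Applying the results of Section~2.2 to the semisimple ring $\bar R$ with trivial derivation, where $\bar R((t;\bar\sigma))=\bar A_{\C_{\bar A}(0)}$ is a left and right quotient ring of $\bar A$, the claim follows.

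For (iii) I would again argue by two inclusions, and this is the only step with a non-formal ingredient. First, $\Jac(R)_A=\Jac(R)A$ by Lemma~\ref{twosided-general} (using $\delta(\Jac(R))\subseteq\delta(R)\subseteq\Jac(R)$), so $\Jac(R)A_S=\Jac(R)_AA_S$ is a two-sided ideal of $A_S$, and localising $0\to\Jac(R)_A\to A\to\bar A\to0$ at $S$ gives $A_S/\Jac(R)A_S\cong\bar A_{\bar S}$ with $\bar S$ the image of $S$; by Theorem~\ref{main-Ore} and Proposition~\ref{C=S} one has $\bar S=\C_{\bar A}(0)$, so this quotient is $\bar A_{\C_{\bar A}(0)}\cong\bar R((t;\bar\sigma))$, which is semisimple, whence $\Jac(A_S)$ maps to $0$ there and $\Jac(A_S)\subseteq\Jac(R)A_S$. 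For the reverse inclusion, take $w\in\Jac(R)A_S$; writing $\Jac(R)_A=\sum_k\xi_kA$ with $\xi_1,\dots,\xi_m$ a finite set of right-ideal generators of $\Jac(R)$ and clearing denominators on the right, $w=xs^{-1}$ with $x\in\Jac(R)_A$ and $s\in S$, hence $1-w=(s-x)s^{-1}$ in $A_S$. Now $\overline{s-x}=\bar s$ in $\bar A$, which is regular since $s\in S=\C_A(\Jac(R)_A)$; therefore $s-x\in\C_A(\Jac(R)_A)=S$ is a unit in $A_S$, so $1-w\in A_S^{\times}$, and as $\Jac(R)A_S$ is two-sided this yields $\Jac(R)A_S\subseteq\Jac(A_S)$. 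The point to watch, and the main obstacle, is exactly this last step: a priori $\sum_n w^n$ converges, and $1-w$ is invertible, only in the completion $B$ of $A_S$, and it is the defining congruence $S=\C_A(\Jac(R)_A)$ — which is insensitive to adding elements of $\Jac(R)_A$ — that forces invertibility already in $A_S$.
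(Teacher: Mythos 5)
Your argument is correct, and the overall strategy — reduce everything to the semisimple quotient $\bar R := R/\Jac(R)$ handled in Section~2.2 — matches the paper's. Two local steps are genuinely different. For (i), where you prove $\Jac(R)_B\subseteq\Jac(B)$ by convergence of the geometric series $\sum_n(bx)^n$ in the complete (pseudocompact) ring $B$ and the reverse inclusion by mapping $\Jac(B)$ into the semisimple ring $B/\Jac(R)_B\cong\bar R((t;\bar\sigma))$, the paper instead cites Gabriel's theorem that in a pseudocompact ring the Jacobson radical equals the intersection of all \emph{open} maximal left ideals, combined with \eqref{nil-jac} applied to each Artinian quotient $B/J_k$. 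Your version is more elementary and self-contained; the paper's is shorter given the Gabriel reference. For (iii), after the identical "clear denominators and use $s-x\in S$" argument for the inclusion $\Jac(R)A_S\subseteq\Jac(A_S)$, the paper proceeds by first showing $A/\Jac(R)_A\cong\bar R[[t;\bar\sigma]]$ is semiprime (reducing via crossed products and matrix rings to the skew field case) and then invoking Goldie's theorem to conclude $A_S/\Jac(R)A_S$ is semisimple. You bypass this entirely by identifying $A_S/\Jac(R)A_S\cong\bar A_{\C_{\bar A}(0)}=\bar R((t;\bar\sigma))$ and citing the Proposition of Section~2.2 on semisimplicity of the Laurent series ring over a semisimple coefficient ring. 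Both are valid; the paper's detour through Goldie's theorem has the side benefit of recording that $A/\Jac(R)_A$ is semiprime, which your route leaves implicit but which is not needed for this Proposition.
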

\begin{proof}
As a consequence of \eqref{nil-jac} the ideal $\Jac(R)_B$ is equal
to the intersection of all open maximal left ideals in the
pseudocompact ring $B$. But according to \cite[IV.4 prop.\
13.b]{gabriel} this intersection in fact coincides with the ordinary
Jacobson radical $\Jac(B)$. This establishes the first claim. In
view of (i) the second claim was discussed already at the beginning
of the previous subsection. For the third claim we first note that
since $S = \C_A(\Jac(R)_A)$ consists of regular elements we have $1
+ \Jac(R)_A A_S \subseteq A_S^\times$ and hence that
\[ \Jac(R)_A A_S = \Jac(R)A_S \subseteq \Jac(A_S). \] On the other
hand we claim that the factor ring $A/\Jac(R)_A$ is semiprime. In
this situation Goldie's theorem then implies that $A_S/\Jac(R)A_S$
is semisimple which gives the reverse inclusion
\[ \Jac(A_S) \subseteq \Jac(R)A_S. \] To establish this claim we use
the isomorphism $A/\Jac(R)_A \cong (R/\Jac(R))[[t;\bar{\sigma}]]$. Simplifying notation we
therefore will show that $R[[t;\sigma]]$ is semiprime for any semisimple $R$. By Lemma
\ref{skewpower-product} we may assume that with the pair $(R,\sigma)$ we are in the cyclic case. In
this situation $R[[t;\sigma]]$ is, by Proposition \ref{finiteextension}, a finite normalising
extension of a ring of the form $R_1[[x;\phi_0]] \times\ldots\times R_1[[x;\phi_0]]$ where $R_1$ is
simple Artinian. This reduces us, by \cite[thm.\ 10.2.4]{mc-rob}, to the case that $R$ is simple
Artinian. Corollary \ref{matrix-skew} together with \cite[prop.\ 3.5.10]{mc-rob} further reduce us
to the case that $R = D$ is a skew field. But according to Proposition \ref{PLI} the ring
$D[[t;\sigma]]$ is prime.
\end{proof}

Now let $I_\bullet$ be a filtration of $R$ satisfying (I). Then the localisation $A_S$ of $A$ with
respect to $S$   is naturally endowed with the filtration $I_\bullet A_S=A_S I_\bullet.$   We
denote the corresponding completion simply by $(A_S)^\wedge$.  We shall prove the following

\begin{prop}\label{BasCompletion}
 $B\cong (A_S)^\wedge.$  Furthermore, for every $k\geq 0$ there are canonical isomorphisms
 $(A/I_kA)_S\cong B/I_kB\cong (R/I_k)((t;\overline{\sigma},\overline{\delta})).$
\end{prop}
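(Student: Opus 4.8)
The plan is to prove the two families of isomorphisms on the right first and then deduce $B\cong(A_S)^\wedge$ by passing to the projective limit. First I would establish the isomorphism $(A/I_kA)_S\cong(R/I_k)((t;\overline\sigma,\overline\delta))$. Since $I_k$ is $\sigma$-, $\sigma'$- and $\delta$-stable, $I_kA=AI_k$ is a two-sided ideal of $A$ by Lemma \ref{twosided-general}, and $A/I_kA\cong(R/I_k)[[t;\overline\sigma,\overline\delta]]$. Under the canonical projection $A\twoheadrightarrow A/I_kA$, the image $\bar S$ of $S=\C_A(\Jac(R)_A)$ lands in $\C_{A/I_kA}(\Jac(R/I_k)_{A/I_kA})$; I would check that $\bar S$ is precisely this set, using that $\Jac(R)_A$ is the preimage of $\Jac(R/I_k)_{A/I_kA}$ together with the description (via Theorem \ref{main-Ore} applied to the Artinian ring $R/I_k$, noting that $I_k$ is open so $R/I_k$ is Artinian) of $\C_A(\Jac(R)_A)$ as a saturated preimage. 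A standard fact about localisation then gives $(A/I_kA)_S=(A/I_kA)_{\bar S}$. By Theorem \ref{main-Ore} applied to $R/I_k$ (which is Artinian), $\bar S=\C_{A/I_kA}(0)$, and by Proposition \ref{SregularArt} and the discussion following it, localising $A/I_kA$ at $\C(0)$ yields exactly its total quotient ring, which by the nilpotent case of Section 1 (Proposition \ref{skewLaurent}, the Artinian case of Section 2) is $(R/I_k)((t;\overline\sigma,\overline\delta))$.

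Next I would identify $B/I_kB$ with the same ring. By Lemma \ref{twosided-general}, $I_kB=BI_k=(I_k)_B$ is two-sided, and the isomorphism \eqref{BmodI} from Section 1, $B/J_k\cong(R/I_k)((t;\overline\sigma,\overline\delta))$ with $J_k=(I_k)_B$, gives exactly $B/I_kB\cong(R/I_k)((t;\overline\sigma,\overline\delta))$ as rings (the ring structure is compatible by Remark \ref{Bprolimes}). So both quotients are canonically the same ring, which settles the ``furthermore'' part.

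For the main isomorphism $B\cong(A_S)^\wedge$, I would argue that the natural map $A_S\to B$ (which exists because $t\in S$ acts invertibly in $B$, or more precisely because every element of $S$, being regular and mapping to a unit in each Artinian quotient, becomes a unit in $B$ — here one uses that $B$ is the projective limit over $k$ of the Artinian quotient rings, in which $\bar S$ consists of units by the quotient-ring property) is compatible with the filtrations, carrying $I_kA_S$ to $I_kB$, and induces on the $k$-th quotient the isomorphism $(A/I_kA)_S=(A_S)/(I_kA_S)\cong(R/I_k)((t;\overline\sigma,\overline\delta))\cong B/I_kB$ just established. Passing to the projective limit over $k$, and using the isomorphism \eqref{topiso} $B\cong\projlim{k}(R/I_k)((t;\overline\sigma,\overline\delta))$ from Remark \ref{Bprolimes} which shows $B$ is already complete and separated for the $J_\bullet$-filtration, we obtain $(A_S)^\wedge\cong\projlim{k}(A/I_kA)_S\cong\projlim{k}B/I_kB\cong B$.

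The main obstacle I expect is the identification of the localising sets under reduction: showing cleanly that $S=\C_A(\Jac(R)_A)$ maps onto $\C_{A/I_kA}(0)$ and that localising $A$ at $S$ then reducing mod $I_kA_S$ agrees with first reducing mod $I_kA$ and then localising at the image. This requires knowing that $S$ is saturated (which follows from $\C_A(\Jac(R)_A)$ being defined via a residue condition, or from Theorem \ref{main-Ore}(i) identifying it with $S_l=S_r$, which are saturated by Lemma \ref{Smult}(ii)) together with the general principle that for a two-sided ideal $\mathfrak a\subseteq A$ and an Ore set $S$ with image $\bar S$ one has $A_S/\mathfrak a A_S\cong(A/\mathfrak a)_{\bar S}$ when $\bar S$ consists of regular elements of $A/\mathfrak a$ — applicable here since $\bar S=\C_{A/I_kA}(0)$. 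Everything else is bookkeeping with the already-established structural results of Sections 1 and 2.
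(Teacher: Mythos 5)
Your proposal is correct and follows essentially the same path as the paper's own (much terser) proof: identify $A_S/I_kA_S \cong (A_k)_{\C_{A_k}(\Jac(R_k)_{A_k})} \cong B_k$ for each $k$ (using the Artinian case from Section 2, where the localisation at $\C_{A_k}(0)$ is the total quotient ring $(A_k)_T = R_k((t;\bar\sigma,\bar\delta))$), then pass to the projective limit via \eqref{topiso}. The extra care you take over saturation of $S$ and the interchange of localisation with reduction mod $I_kA$ is exactly the bookkeeping the paper leaves implicit.
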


It is remarkable that the completion of $A_S$ arises from $A$ just
by `inverting one element: $t.$'

\begin{proof}
The   statement follows  from \eqref{topiso} and the following
isomorphisms  \[B_k\cong (A_k)_{\C_{A_k}(\Jac(R_k)_{A_k})}\cong
A_{\C_A(\Jac(R)_A)}/I_k A_{\C_A(\Jac(R)_A)},\] where we put
$R_k:=R/I_k,$ $A_k:=R_k[[t;\bar{\sigma},\bar{\delta}]]$ and
$B_k:=R_k((t;\bar{\sigma},\bar{\delta}))=(A_k)_T.$
\end{proof}

Since $A \subseteq B$ the natural map $A_S \longrightarrow B$ given
by the above proposition must be injective. In particular, the
filtration $I_\bullet A_S$ is separated.

\begin{prop}
Under the assumptions of Proposition \ref{skew-grad} $B$ is a flat $A_S$-module.
\end{prop}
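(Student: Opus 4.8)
The plan is to deduce flatness of $B$ over $A_S$ from the already established flatness of $B$ over $A$ together with the fact (Proposition \ref{BasCompletion}) that $B$ is the completion of $A_S$ with respect to the filtration $I_\bullet A_S$, and that $A_S$ is a localisation of $A$. First I would record that under the hypotheses of Proposition \ref{skew-grad} the ring $A$ is Noetherian: indeed $\gr_{I_\bullet} R$ is an almost normalising (hence Noetherian) extension of $R/I_1$, so $\gr_{I_\bullet\cap A} A \cong \gr_{I_\bullet} R \otimes_{R/I_1} (R/I_1)[[t]]$ is Noetherian and the claim follows from \cite[prop.\ II.1.2.3]{li-o} (this is exactly the argument already used inside the proof of Proposition \ref{skew-grad}). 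Consequently $A_S$ is Noetherian as well, being a localisation of a Noetherian ring.

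Next I would use the local criterion for flatness over a Noetherian ring in its filtered/adic form. The filtration $I_\bullet A_S$ on $A_S$ is separated (noted right after Proposition \ref{BasCompletion}), $A_S$ is Noetherian, and $B = (A_S)^\wedge$ is its completion; moreover $I_k B = I_k A_S \otimes_{A_S} B$ since $I_k$ is finitely generated. By Proposition \ref{BasCompletion} we have $B/I_k B \cong (A/I_k A)_S$, and $\gr_{I_\bullet A_S} A_S \cong \gr_{I_\bullet} R \otimes_{R/I_1}(R/I_1)(t) $ maps to $\gr_{I_\bullet B} B \cong \gr_{I_\bullet}R \otimes_{R/I_1}(R/I_1)((t))$; the point is that completion along a separated filtration by a Noetherian ring is flat, i.e.\ $B$ is flat over $A_S$ provided the associated graded map is flat — and here $\gr B$ is the localisation of $\gr A_S$ at $t$ (exactly as in the proof of Proposition \ref{skew-grad}), hence flat. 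Alternatively, and perhaps more cleanly, one invokes \cite[prop.\ II.1.2.1 and prop.\ II.1.2.3]{li-o} directly: $A_S$ is filtered by $I_\bullet A_S$, its Rees/associated-graded ring is Noetherian (localise the Noetherian $\gr_{I_\bullet\cap A}A$), and the completion of a Zariski filtered ring is faithfully flat over it.

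There is also a shorter route worth mentioning as a fallback. Since $A \to A_S$ is a localisation at the Ore set $S$, and $B$ is already flat as a right $A$-module (Proposition \ref{skew-grad}), it suffices to check that $B \cong B \otimes_A A_S$, i.e.\ that every element of $S$ acts invertibly on $B$; then $B$ is flat over $A_S$ because flatness is preserved under base change along $A \to A_S$ and because $A_S$-module structures on $A_S$-modules are pulled back uniquely. Every $s \in S = \C_A(\Jac(R)_A)$ has image a unit in each $B_k = B/I_k B \cong (R/I_k)((t;\bar\sigma,\bar\delta))$ — because in the Artinian quotient $R/I_k$ the set $\C_{A_k}(\Jac(R_k)_{A_k}) = \C_{A_k}(0)$ consists of units in $B_k$ by the quotient-ring property established in subsection 2.2 — and $B = \projlim_k B_k$ with the $I_k B$ forming a ring filtration, so $s$ is a unit in $B$. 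Hence the universal property of $A_S$ gives a map $A_S \to B$, and since $A \to A_S$ is flat while $A \to B$ is flat, $B$ is flat over $A_S$ by transitivity of flatness along $A \to A_S \to B$ once one knows $B = A_S \otimes_A B$, which is immediate from $S \subseteq B^\times$.

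The main obstacle is the verification that $A_S \otimes_A B \to B$ is an isomorphism (equivalently, that $B$ acquires its $A_S$-module structure compatibly), which amounts precisely to the claim $S \subseteq B^{\times}$; this in turn rests on the limit description $B \cong \projlim_k B_k$ together with the Artinian-case identity $\C_{A_k}(\Jac(R_k)_{A_k}) = \C_{A_k}(0)$ and the fact that in an Artinian ring regular elements are units. Everything else — Noetherianity of $A$ and $A_S$, separatedness of $I_\bullet A_S$, and the filtered-flatness bookkeeping — is routine given the results already assembled in the excerpt, in particular Propositions \ref{skew-grad} and \ref{BasCompletion} and the structure theory of subsection 2.2.
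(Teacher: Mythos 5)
Your second route is, in substance, the paper's own argument, which is a one-liner: by Proposition~\ref{skew-grad} the module $B$ is flat over $A$, and since $A \to A_S$ is a localisation (hence a ring epimorphism, so $A_S \otimes_A A_S \cong A_S$) the functors $B \otimes_{A_S} -$ and $B \otimes_A -$ agree on $A_S$-modules; exactness of the latter gives exactness of the former. You reach the same place via $B \cong B \otimes_A A_S$, so the content is identical, but two remarks are in order. First, you re-establish $S \subseteq B^\times$ from scratch via the limit description and the Artinian case; this is unnecessary, since Proposition~\ref{BasCompletion} already furnishes the ring homomorphism $A_S \to (A_S)^\wedge \cong B$, which is all that is needed. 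Second, the phrase ``transitivity of flatness along $A \to A_S \to B$'' is not the right mechanism (transitivity produces flatness over $A$ from flatness over $A_S$, the opposite of what is sought); what actually carries the argument is the base-change identity $B \otimes_{A_S} M = (B \otimes_A A_S)\otimes_{A_S} M = B \otimes_A M$, which you do state. As for your first route through the Zariskian filtration machinery of \cite{li-o}: it is much heavier than the situation warrants, it would require verifying the Zariski condition for $(A_S, I_\bullet A_S)$ separately, and it contains a slip --- by Proposition~\ref{BasCompletion} one has $A_S/I_kA_S \cong B/I_kB \cong (R/I_k)((t;\bar\sigma,\bar\delta))$, so $\gr_{I_\bullet A_S}A_S$ is isomorphic to $\gr_{J_\bullet}B$ (Laurent \emph{series} coefficients), not to a Laurent \emph{polynomial} ring $\gr_{I_\bullet}R \otimes_{R/I_1}(R/I_1)(t)$ as you wrote.
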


\begin{proof}
By Proposition \ref{skew-grad} $B$ is a flat $A$-module.
 But on $A_S$-modules we have the natural isomorphism of
functors $B\otimes_{A_S}- = B\otimes_A -.$
\end{proof}

\section{Iwasawa algebras}

We fix a prime $p,$   let $G$  be a compact $p$-adic Lie group, $\O$ the ring of
integers of any fixed finite extension of $\qp$,  and $\kappa$ its residue field. We write $
\La(G)$ for the Iwasawa algebra $ \La(G)$ of $G,$ i.e.\ the completed group algebra of $G,$ with
coefficients $\O,$ while $\Omega(G)$ denotes the completed group algebra of $G$ with coefficients
in $\kappa;$ both rings are well known to be Noetherian. Henceforth we assume that $G$ has a closed
normal subgroup $H$ such that $G/H$ is isomorphic to $\zp.$ Recall from \cite{sch-ven} that
  \[A:=\La(G) \]  is isomorphic to a skew power
series ring $R[[t;\sigma,\delta]]$ over the Iwasawa algebra $R :=
\La(H)$ of $H.$ For this one picks once and for all a topological
generator $\gamma$ of a subgroup of $G$ which maps isomorphically
onto $G/H \cong \zp$ and one defines $t := \gamma - 1$, $\sigma(r)
:= \gamma r\gamma^{-1}$ for $r \in R$, and $\delta := \sigma - \id$.
As a consequence of \cite[lem.\ 1.6]{sch-ven} the
$\sigma$-derivation $\delta$ is topologically nilpotent and hence
$\sigma$-nilpotent. In particular, for any $k \geq 1$ we find an $m
\geq 1$ such that $\delta^m(R) \subseteq \Jac(R)^k$. Clearly the
ideals $\Jac(R)^k$ for $k \geq 1$ are $\sigma$-, $\sigma'$-, and
$\delta$-stable. Hence the assumption (I) holds and the topological
ring
\begin{equation*}
    B := R\hspace{-3 pt}\ll\hspace{-2 pt} t;\sigma,\delta]]
\end{equation*}
exists by Proposition \ref{skewLaurentinfinite}.

The literature we refer to here and later in this section usually assumes $\O=\zp.$ But in every case it is easily checked that the cited results also hold in our slightly more
general situation, for the statements in \cite{cfksv} this is already  remarked on the bottom of
page 203 (loc.\ cit.).

 \begin{rem}\label{t-independent}
  The ring $B$ is independent, up to natural isomorphism, of the
  choice of the element $\gamma$.
\end{rem}

\begin{proof}
Since this also follows indirectly from the subsequent results we
only indicate a direct argument. Let $\gamma'$ be a second choice
and put $t' := \gamma'-1$. Then $\gamma' = h\gamma^\epsilon$ for
some $h \in H$ and $\epsilon\in\z_p^\times.$ Hence $t' =
h(t+1)^\epsilon  -1=hut +(h-1)$ with some unit $u\in A^\times.$ One
checks that with $t$ also $t'$ is invertible in $B$ provided the
powers $(h-1)^i$ tend to zero in $R$ with $i \rightarrow \infty$.
But
\begin{equation*}
    h-1 = (\gamma'-1)(\gamma^{-\epsilon}-1) + (\gamma'-1) +
    (\gamma^{-\epsilon}-1).
\end{equation*}
The term $\gamma'-1$ (and likewise $\gamma^{-\epsilon}-1$) lies in
the Jacobson radical of $\Lambda(G')$ for some pro-$p$-subgroup $G'
\subseteq G$. Hence its powers tend to zero.
\end{proof}

We fix a closed normal subgroup $N$ of $G$ which is contained in $H$
as an open subgroup and which is a pro-$p$-group. By $\N(H)$ we
denote the preimage of the prime radical $\N(\Omega(G/N))$ of
$\Omega(G/N)$ under the canonical projection
\[\La(G)\To \Omega(G/N),\]  the kernel of which we denote by
$\m(N).$ The definition of $\N(H)$ is independent of the choice of $N$ by
\cite[Lem.\ 2.5]{cfksv}. Then the set
\[S:=\C_A(\N(H))=\C_A(\m(N))\] is a (left and right) Ore set
  consisting of regular elements  of $A$ by \cite[prop.\
2.6, thm.\ 2.4]{cfksv} or \cite[thm.\ G]{ard-brown}. The localisation $A_S$ of $A$ at $S$ is
semi-local \cite[prop.\ 4.2]{cfksv}.

\begin{lem}\label{ardakov}
\begin{itemize}
  \item[(i)] $\N(H) = \mathrm{Jac}(R)A$; in particular, $S =
  \C_A(\mathrm{Jac}(R)_A)$.
  \item[(ii)] $\mathrm{Jac}(A_S)=\N(H)A_S = \mathrm{Jac}(R)A_S$.
\end{itemize}
\end{lem}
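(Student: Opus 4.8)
The plan is to reduce Lemma~\ref{ardakov} to the general results of Sections~1 and 2 by identifying the group-theoretic objects $\N(H)$ and $\m(N)$ with the ideal $\Jac(R)_A$ studied there. Recall $R=\La(H)$ and, since $R$ is a Noetherian pseudocompact local ring in the sense relevant here, its Jacobson radical $\Jac(R)$ is the maximal ideal; moreover $\Jac(R)_A=IB\cap A$-type constructions from Lemma~\ref{twosided-general} apply because $\Jac(R)$ is $\sigma$-, $\sigma'$-, and $\delta$-stable (as noted in the paragraph preceding the lemma). The key point is that $\Jac(R)_A$ is precisely the kernel of the projection $A=R[[t;\sigma,\delta]]\twoheadrightarrow(R/\Jac(R))[[t;\overline\sigma]]$, which is the statement recalled at the start of Section~2.

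For part~(i): I would first show $\Jac(R)_A\subseteq\m(N)$. Since $N\subseteq H$ is open and pro-$p$, the kernel $\m(N)$ of $\La(G)\to\Omega(G/N)$ contains the augmentation-type ideal coming from $N$; because $R=\La(H)$ and $N$ is open in $H$, the image of $\Jac(R)$ in $\Omega(G/N)$ lands in the radical of $\Omega(H/N)$, which (as $H/N$ is finite) is nilpotent, and one checks this forces $\Jac(R)_A$ into $\m(N)$ and hence into $\N(H)$, the preimage of $\N(\Omega(G/N))$. Conversely, $\N(H)\supseteq\m(N)$ has the property that $A/\N(H)\twoheadleftarrow A/\m(N)=\Omega(G/N)$ modulo its prime radical is semiprime; I would compare this with $A/\Jac(R)_A\cong(R/\Jac(R))[[t;\overline\sigma]]$ and invoke the fact, proved in Section~2 (the Proposition following Theorem~\ref{main-Ore}), that $R[[t;\sigma]]$ is semiprime for semisimple $R$. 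Since $\Jac(R)_A\subseteq\N(H)$ and $A/\Jac(R)_A$ is semiprime while $\N(H)/\Jac(R)_A$ is contained in the prime radical of the quotient (being the image of the nil ideal defining $\N(H)$ over $\m(N)$), we get $\N(H)=\Jac(R)_A=\Jac(R)A$. The identification $S=\C_A(\Jac(R)_A)$ is then immediate, and it also matches $S$ with the Ore set of Theorem~\ref{main-Ore}.

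For part~(ii): once $\N(H)=\Jac(R)_A$ is known, the statement $\Jac(A_S)=\N(H)A_S=\Jac(R)A_S$ is exactly the content of the Proposition stated just after Theorem~\ref{main-Ore} (the one asserting $\Jac(A_S)=\Jac(R)A_S$), whose proof uses that $A/\Jac(R)_A$ is semiprime Goldie and that $S$ consists of regular elements, so $A_S/\Jac(R)A_S$ is semisimple. So part~(ii) follows formally from part~(i) together with that earlier Proposition, with no new work; I would simply cite it.

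The main obstacle I anticipate is the ring-theoretic half of part~(i): carefully justifying that $\N(H)$, defined as the preimage of a prime radical under a group-algebra projection, coincides with the purely skew-power-series ideal $\Jac(R)_A$, rather than merely being sandwiched between $\m(N)$ and something larger. The cleanest route is probably to pick $N$ small enough (possible by the independence-of-$N$ statement cited from \cite[Lem.~2.5]{cfksv}) so that $\m(N)\subseteq\Jac(R)_A$, combine with $\Jac(R)_A\subseteq\m(N)\subseteq\N(H)$ getting $\Jac(R)_A=\m(N)$ at that level, and then use semiprimeness of $A/\Jac(R)_A$ to conclude $\N(H)=\Jac(R)_A$ since $\N(H)/\m(N)=\N(\Omega(G/N))$ is nil; the delicate bookkeeping is exactly this interplay between the choice of $N$, finiteness of $H/N$, and nilpotence of radicals in the finite-group-algebra layer.
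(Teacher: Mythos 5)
Your proposal for part (ii) is essentially the paper's argument, but the way you cite it is slightly off: the Proposition following Theorem~\ref{main-Ore} was proved under the standing hypothesis $\delta(R)\subseteq\Jac(R)$ of Section~2, and the paper explicitly remarks (right after Lemma~\ref{ardakov}) that this hypothesis ``does not seem to be satisfied in this generality.'' So you cannot cite it as a black box. What \emph{does} transfer is the underlying Goldie argument, which the paper spells out directly: $S$ consists of regular elements gives $1+\N(H)A_S\subseteq A_S^\times$, and $\N(H)$ semiprime plus Goldie gives $A_S/\N(H)A_S=(A/\N(H))_S$ semisimple, so $\Jac(A_S)=\N(H)A_S$. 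Since you describe exactly this, the issue is cosmetic.

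Part (i) is where there is a genuine gap. Your containments are pointing the wrong way in several places. You begin ``I would first show $\Jac(R)_A\subseteq\m(N)$,'' but this inclusion is \emph{false} in general: $\m(N)$ is the kernel of $A\to\Omega(G/N)$, and, as you yourself say two lines later, the image of $\Jac(R)$ in $\Omega(G/N)$ is the (nonzero) radical of $\Omega(H/N)$; a nonzero image precisely contradicts containment in the kernel. The correct and unconditional inclusion is the \emph{opposite} one, $\m(N)\subseteq\Jac(R)_A$, because $\m(N)$ is generated by a uniformizer $\pi$ of $\O$ and the elements $n-1$ with $n\in N\subseteq H$, all of which lie in $\Jac(R)$; no shrinking of $N$ is needed. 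The ``cleanest route'' in your last paragraph compounds the error: having both $\m(N)\subseteq\Jac(R)_A$ and $\Jac(R)_A\subseteq\m(N)$ would force $\Omega(G/N)\cong(R/\Jac(R))[[t;\overline\sigma]]$, which only happens when $N=H$, i.e.\ when $H$ is itself pro-$p$; it is not achievable by choosing $N$ small. The repaired chain is $\m(N)\subseteq\Jac(R)_A\subseteq\N(H)$ (the second inclusion because the image of $\Jac(R)_A$ in $\Omega(G/N)$ is the nilpotent ideal $\Jac(\Omega(H/N))\Omega(G/N)$, hence lies in $\N(\Omega(G/N))$), after which the semiprimeness of $A/\Jac(R)_A\cong(R/\Jac(R))[[t;\overline\sigma]]$ kills the nilpotent ideal $\N(H)/\Jac(R)_A$ and gives equality. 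Note finally that the paper does none of this: it disposes of (i) in one line by citing \cite[prop.~6.3]{ard} plus a lifting argument, so your from-scratch route, even once repaired, is substantially more work than the paper's.
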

\begin{proof}
(i) This follows from \cite[prop.\ 6.3]{ard} by a simple lifting
argument. (ii) The first identity is a standard fact. We briefly
recall the argument. Since $S$ consists of regular elements a simple
computation shows that $1 + \N(H)A_S \subseteq A_S^\times$. This
implies that $\N(H)_S \subseteq \mathrm{Jac}(A_S)$. On the other
hand since the ideal $\N(H)$ is semiprime Goldie's theorem says that
$A_S/\N(H)A_S = (A/\N(H))_S$ is semisimple. Hence we must have
$\N(H)A_S = \mathrm{Jac}(A_S)$.
\end{proof}

The additional assumption $\delta(R) \subseteq \mathrm{Jac}(R)$
which we needed in the previous section does not seem to be
satisfied in this generality. But we do have the following.

\begin{lem}\label{extra-powerful}
\begin{itemize}
   \item[(i)] If $H$ is a pro-$p$-group then the assumption $(\mathrm{SI}_0)$
   is satisfied for the $\mathrm{Jac}(R)$-adic filtration. In
   particular $B$ is pseudocompact and is isomorphic to the
   $\mathrm{Jac}(R)A_S$-adic completion of $A_S$.
   \item[(ii)] If $G$ is a powerful pro-$p$-group then the assumption $(\mathrm{SI})$
   is satisfied for the $\mathrm{Jac}(R)$-adic filtration. In particular
   $\sigma$ induces the identity on the associated graded ring.
   \item[(iii)] If $H$ is an extra-powerful pro-$p$-group then the associated graded
   ring for the $\mathrm{Jac}(R)$-adic filtration is commutative and finitely
   generated over $\kappa$.
   \item[(iv)] If $G$ is powerful and $H$ is extra-powerful then $B$ is
   Noetherian and flat over $A_S$ and $A$.
\end{itemize}
\end{lem}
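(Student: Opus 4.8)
The plan is to reduce all four parts to two short computations with commutators inside $R=\Lambda(H)$ and then feed the outcome into Remark \ref{Jac}, Proposition \ref{skewLaurentinfinite}, Theorem \ref{main-Ore}, Proposition \ref{BasCompletion}, Proposition \ref{skew-grad} and the flatness statement following it. For (i): when $H$ is pro-$p$, $R$ is local with $\Jac(R)=\mathfrak m$ and $R/\mathfrak m=\kappa$, and since $\sigma=\mathrm{Int}(\gamma)$ fixes the coefficient ring $\O$ pointwise while $\O\twoheadrightarrow\kappa$, the automorphism $\sigma$ induces the identity on $\kappa$; hence $\delta(r)=\sigma(r)-r\in\mathfrak m$ for every $r$, i.e.\ $\delta(R)\subseteq\Jac(R)$. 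Combined with the automatic $\delta(\Jac(R))\subseteq\sigma(\Jac(R))+\Jac(R)=\Jac(R)$, Remark \ref{Jac} gives $(\mathrm{SI}_0)$ for the $\Jac(R)$-adic filtration, so $B$ is pseudocompact by Proposition \ref{skewLaurentinfinite}. Since $\delta(R)\subseteq\Jac(R)$ is exactly the standing hypothesis of Section 2 and $S=\C_A(\Jac(R)_A)$ by Lemma \ref{ardakov}(i), Proposition \ref{BasCompletion} applied with $I_\bullet$ the $\Jac(R)$-adic filtration identifies $B$ with $(A_S)^\wedge$; as $(\Jac(R)_A)^k=(\Jac(R)^k)_A$ by Lemma \ref{twosided-general}, this is the $\Jac(R)A_S$-adic completion.

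For (ii) I would first compute, for $h\in H$, $\delta(h-1)=\gamma h\gamma^{-1}-h=([\gamma,h]-1)h$; as $h\equiv1\bmod\mathfrak m$ is a unit, it suffices to prove $[\gamma,h]-1\in\mathfrak m^2$, equivalently $[\gamma,h]\in\Phi(H)=\overline{H^p[H,H]}$. Here $[\gamma,h]\in[G,G]$, and since $G$ is powerful the set of $p$-th powers $\{g^p:g\in G\}$ (of fourth powers, if $p=2$) is a subgroup containing $[G,G]$, so $[\gamma,h]=g^p$ for a single $g\in G$; as $[\gamma,h]\in H$ and $G/H\cong\zp$ is torsion-free, $g\in H$, whence $[\gamma,h]\in H^p\subseteq\Phi(H)$. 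Because $\delta$ is $\O$-linear and continuous with $\delta(p)=0$, and $\mathfrak m$ is topologically generated over $R$ by $p$ and the $h-1$, this yields $\delta(\Jac(R))\subseteq\Jac(R)^2$; with (i), Remark \ref{Jac} gives $(\mathrm{SI})$, and then $\sigma=\id+\delta$ acts as the identity on every $\mathfrak m^k/\mathfrak m^{k+1}$, hence on the associated graded ring.

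For (iii) the key elementary fact, proved by induction from $(ab)-1=(a-1)(b-1)+(a-1)+(b-1)$ and $y^{p^k}-1=\sum_{i\geq1}\binom{p}{i}(y-1)^i$ (using $p\mid\binom{p}{i}$ for $0<i<p$ and $p\in\mathfrak m$), is that $x\in\overline{H^{p^k}}$ implies $x-1\in\mathfrak m^{k+1}$ for any pro-$p$ group $H$. Taking $k=2$ and the defining inclusion $[H,H]\subseteq\overline{H^{p^2}}$ of an extra-powerful $H$ (with the usual modification when $p=2$), we get $[h_1,h_2]-1\in\mathfrak m^3$ and hence $(h_1-1)(h_2-1)-(h_2-1)(h_1-1)=([h_1,h_2]-1)h_2h_1\in\mathfrak m^3$. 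Since $(R,\mathfrak m)$ is Noetherian local with $\O$ central and $R/\mathfrak m=\kappa$, its associated graded ring is a finitely generated $\kappa$-algebra, generated in degree one by the images of $p$ and of the $h_i-1$ for topological generators $h_i$ of $H$; these commute pairwise by the displayed congruence ($p$ being central), so the graded ring is commutative.

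Finally, for (iv): $G$ powerful and $H$ extra-powerful are both pro-$p$, so (i) places us in the Section 2 setting, (ii) supplies $(\mathrm{SI})$ for $I_\bullet=\Jac(R)^\bullet$ together with the triviality of $\sigma$ on $\gr_{I_\bullet}R$, and (iii) shows $\gr_{I_\bullet}R$ is a finitely generated commutative $\kappa$-algebra, hence an almost normalising extension of $R/I_1=\kappa$. All hypotheses of Proposition \ref{skew-grad} hold, so $B$ is Noetherian and flat as a left and right $A$-module, and flatness over $A_S$ follows from the proposition stated just after Proposition \ref{BasCompletion}. I expect the main obstacle to be the two group-theoretic inputs for (ii) and (iii) — extracting the precise power-structure consequences of ``powerful'' and ``extra-powerful'' and handling the prime $2$ correctly — while everything else is assembling results already in place.
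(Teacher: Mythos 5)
Your argument is correct and follows essentially the same route as the paper: in (i) use that $\sigma$ is trivial on $R/\Jac(R)=\kappa$ to get $\delta(R)\subseteq\Jac(R)$ and then invoke Propositions \ref{skewLaurentinfinite} and \ref{BasCompletion}; in (ii) and (iii) show $\delta(h-1)=([\gamma,h]-1)h\in\Jac(R)^2$ and $[h_i,h_j]-1\in\Jac(R)^3$ via the binomial expansion of $(1+(g-1))^{p^\epsilon}-1$; and in (iv) feed (ii)+(iii) into Proposition \ref{skew-grad}. The only slip is cosmetic: when $\O\ne\zp$ the degree-one generator of $\gr R$ coming from the coefficient ring is a uniformizer $\pi$ of $\O$, not $p$ (which already lies in $\Jac(R)^e$ if $\O/\zp$ is ramified); since $\pi$ is central this does not affect the commutativity argument.
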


\begin{proof}
(i) In this case $\mathrm{Jac}(R)$ is the unique maximal ideal in
$R$ and $R/\mathrm{Jac}(R) = \kappa$. Hence $\sigma$ induces
the identity on $R/\mathrm{Jac}(R)$ which implies that $\delta(R)
\subseteq \mathrm{Jac}(R)$. Therefore Propositions
\ref{skewLaurentinfinite} and \ref{BasCompletion} apply. (Note that
in this situation we have $\N(H) = \m(H) = \mathrm{Jac}(R)A$. It
then follows directly from Theorem \ref{main-Ore} that $S$ is a left
and right Ore set consisting of regular elements.)

(ii) The assumption about $G$ means that $[G,G] \subseteq
G^{p^\epsilon}$ with $\epsilon = 1$ and $= 2$ for $p$ odd and $p =
2$, respectively. We in particular have
\[
[\gamma,H] \subseteq G^{p^\epsilon} \cap H = H^{p^\epsilon}
\]
where the latter identity comes from the fact that $G/H \cong
\mathbb{Z}_p$ is torsionfree. For any $h \in H$ we therefore have
$[\gamma,h] = g^p$ for some $g \in H$. We now compute
\begin{align*}
    \delta(h-1) & = \sigma(h-1) - (h-1) = [\gamma,h] - h \\
    & = (g^p - 1)h = ((1 + (g-1))^p - 1)h \\
    & = p(g-1)h + \big(\sum_{i \geq 2} \binom{p}{i} (g-1)^i \big)h \in
    \mathrm{Jac}(R)^2.
\end{align*}
%The computation in \cite[ex.\ 2.3]{ven-weier} now
This shows $\delta(\mathrm{Jac}(R)) \subseteq \mathrm{Jac}(R)^2$ and
hence that $(\mathrm{SI})$ is satisfied and that $\sigma$ induces
the identity on the associated graded ring.

(iii) The quotient $\mathrm{Jac}(R)/\mathrm{Jac}(R)^2$ as an $\kappa$-vector space is generated by
the cosets of an uniformizing element $\pi$ of $\O$ and $h_1 - 1, \ldots, h_r - 1$ where
$h_1,\ldots,h_r$ is a minimal system of topological generators of $H$. The assumption on $H$ means
that $[H,H] \subseteq H^{p^2}$. For any two $h_i,h_j$ we therefore may write $[h_i,h_j] = h^{p^2}$
for some $h \in H$. We compute
\begin{align*}
    & (h_i-1)(h_j-1) - (h_j-1)(h_i-1)  = h_ih_j - h_jh_i = ([h_i,h_j]-1)h_jh_i \\
    & \qquad = (h^{p^2} - 1)h_jh_i = ((1 + (h-1))^{p^2} - 1)h_jh_i \\
    & \qquad = p^2(h-1)h_jh_i + \frac{p^2(p^2-1)}{2}(h-1)^2h_jh_i + \big(\sum_{k \geq 3} \binom{p^2}{k} (h-1)^k \big)h_jh_i \in
    \mathrm{Jac}(R)^3.
\end{align*}
This means that the corresponding cosets commute in the graded ring.

(iv) By (ii) and (iii) the assumptions in Proposition
\ref{skew-grad} are satisfied.
\end{proof}

To establish the same facts about $B$ also for general $G$ and $H$ we use the following descent
technique. Let $G' \subseteq G$ be an open normal subgroup and put $H' := H \cap G'$, $R' :=
\Lambda(H')$, and $A' := \Lambda(G')$. We pick an element $\gamma' \in G'$ which topologically
generates a pro-$p$-subgroup of $G'$ and  whose image in $G'/H' \cong \mathbb{Z}_p$ is a
topological generator, and   (suspending earlier notation from \S 1) we define $t' := \gamma' - 1$,
$\sigma'(r') := \gamma' r' \gamma'^{-1}$ for $r' \in R'$, and $\delta' := \sigma' - \id$. Then $A'
= R'[[t';\sigma',\delta']]$ and we may define $B' := R'\hspace{-3 pt}\ll\hspace{-2 pt}
t';\sigma',\delta']]$. We introduce the Ore set $S' := \C_{A'}(\mathrm{Jac}(R')_{A'})$. As a piece
of general notation we denote in the following by $\widehat{C}$ the $\mathrm{Jac}(C)$-adic
completion of any given ring $C$.

\begin{lem}\label{cofinal}
The $\mathrm{Jac}(A_S)$-adic and  the $\m(N)A_S$-adic filtrations of
$A_S$ are equivalent.
\end{lem}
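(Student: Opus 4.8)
The plan is to show mutual containment of the two filtrations up to a shift, i.e. that for each $n$ there exists $m$ with $\m(N)^m A_S \subseteq \Jac(A_S)^n A_S$ and conversely $\Jac(A_S)^m A_S \subseteq \m(N)^n A_S$, which is exactly the assertion of equivalence. By Lemma \ref{ardakov}(ii) we already know $\Jac(A_S) = \N(H)A_S = \m(N)A_S$ as \emph{ideals}, so in fact the two ideals coincide on the nose and hence so do all their powers. Thus the lemma is essentially immediate from Lemma \ref{ardakov}; the only thing to check is that $\N(H)A_S = \m(N)A_S$, but this is precisely the identity $\C_A(\N(H)) = \C_A(\m(N))$ together with the fact (recorded just before Lemma \ref{ardakov}) that $\N(H) = \m(N)$ has the same closure-of-commutant, combined with $\N(H) = \Jac(R)A$ from part (i).

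First I would recall that $\m(N) = \ker(\Lambda(G) \to \Omega(G/N))$ and that, by definition, $\N(H)$ is the preimage of the prime radical $\N(\Omega(G/N))$; since $\Omega(G/N)$ is a Noetherian ring whose prime radical is nilpotent (it is a complete group algebra over a field of a $p$-adic Lie group, hence a finite module over a commutative Noetherian subring, and its prime radical is a nilpotent ideal), we get $\m(N)^e \subseteq \N(H)$ for some $e \geq 1$, while trivially $\N(H) \supseteq \m(N)$ fails in general but $\N(H) \subseteq$ the full preimage of the nilradical gives $\N(H)^{?}$... more cleanly: $\m(N) \subseteq \N(H)$ since $\N(\Omega(G/N)) \supseteq 0$, wait — actually $\m(N) \subseteq \N(H)$ holds because $\m(N)$ maps to $0 \subseteq \N(\Omega(G/N))$. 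And $\N(H)^e \subseteq$ ... the point is that $\N(H)/\m(N) = \N(\Omega(G/N))$ is nilpotent, so $\N(H)^f \subseteq \m(N)$ for some $f$. Hence $\m(N) \subseteq \N(H)$ and $\N(H)^f \subseteq \m(N)$, so the $\m(N)$-adic and $\N(H)$-adic filtrations of $\Lambda(G)$ are equivalent; localising at $S$ preserves this. Finally $\N(H)A_S = \Jac(A_S)$ by Lemma \ref{ardakov}(ii), so all three filtrations $\m(N)A_S$, $\N(H)A_S$, $\Jac(A_S)$ are pairwise equivalent, which is the claim.

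The main (and only genuine) point to verify carefully is the nilpotence of $\N(\Omega(G/N))$, equivalently that $\N(H)/\m(N)$ is a nilpotent ideal of $\Omega(G/N)$: this is standard since $\Omega(G/N)$ is Noetherian and, being the completed $\kappa$-group algebra of the $p$-adic Lie group $G/N$, its prime radical coincides with its Jacobson radical's... more precisely one uses that in a Noetherian ring the prime radical is nilpotent. So there is no real obstacle here — the lemma is a bookkeeping consequence of the results already established, and I would simply write: by Lemma \ref{ardakov}(ii) we have $\Jac(A_S) = \N(H)A_S$, while $\m(N) \subseteq \N(H)$ with $\N(H)^f \subseteq \m(N)$ for suitable $f$ because $\N(\Omega(G/N))$ is a nilpotent ideal of the Noetherian ring $\Omega(G/N)$; localising and taking powers yields the asserted equivalence of filtrations.
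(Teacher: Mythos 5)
Your final argument --- $\m(N) \subseteq \N(H)$ together with $\N(H)^f \subseteq \m(N)$ for some $f$, the latter because $\N(\Omega(G/N))$ is a nilpotent ideal of the Noetherian ring $\Omega(G/N)$, followed by localisation at $S$ and the identification $\N(H)A_S = \Jac(A_S)$ from Lemma \ref{ardakov}(ii) --- is exactly the paper's proof (which cites \cite[thm.\ 2.3.7]{mc-rob} for the nilpotence of the prime radical and \cite[prop.\ 2.1.16(vi)]{mc-rob} for compatibility of powers of ideals with localisation). One caution on your opening: Lemma \ref{ardakov}(ii) states $\Jac(A_S) = \N(H)A_S = \Jac(R)A_S$, not $\N(H)A_S = \m(N)A_S$; that on-the-nose coincidence is neither asserted there nor needed, and you correctly abandon it in the cleaner version you settle on.
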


\begin{proof}
Since $\N(\Omega(G/N))\subseteq \Omega(G/N)$ is nilpotent by
\cite[thm.\ 2.3.7]{mc-rob} we obtain \[ \N( H)^n\subseteq
\m(N)\subseteq \N( H)\] and thus
  (observing \cite[prop.\ 2.1.16(vi)]{mc-rob})
\[\mathrm{Jac}(A_S)^n\subseteq \m(N)A_S\subseteq \mathrm{Jac}(A_S)\] for some $n\in \n.$ The
claim follows.
\end{proof}

\begin{prop}\label{descent}
We have the natural identifications as bimodules:
\begin{itemize}
    \item[(i)] $A_S = A'_{S'} \otimes_{A'} A$.
  \item[(ii)] $\widehat{A_S} = \widehat{A'_{S'}}
  \otimes_{A'} A$.
  \item[(iii)] $B = B' \otimes_{A'} A$.
\end{itemize}
\end{prop}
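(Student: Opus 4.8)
The plan is to prove the three identifications in order, deducing each from the previous one together with a flatness statement. For (i), the key point is that $A$ is free of finite rank as a left (and right) module over $A'$, since $G'$ is open in $G$, so that a coset decomposition $G = \bigsqcup_{i} g_i G'$ gives $A = \bigoplus_i g_i A'$. It therefore suffices to check that the Ore set $S'$ of $A'$ is sent into $S = \C_A(\Jac(R)_A)$ under the inclusion $A' \hookrightarrow A$, and that localising $A$ at $S$ is the same as localising at the image of $S'$. For the first point I would invoke Lemma \ref{PC-functorial}(i): from the surjection $(A')^m \twoheadrightarrow A$ of $A'$-bimodules and the property $PC_l(R' \subseteq A')$ (which holds, e.g.\ via Lemma \ref{reductionPC} and Lemma \ref{PC-SOre}), one gets $PC_l(R' \subseteq A)$ together with the saturation formula \eqref{sat}; combined with Lemma \ref{ardakov}(i) (so that $S$ and $S'$ are indeed the $R$-, resp.\ $R'$-cofinite sets) this shows $S'$ maps into $S$ and that $S$ is the saturation of the image of $S'$. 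Since $A$ is $A'$-flat, $A'_{S'} \otimes_{A'} A$ is already $S'$-local and regular elements act invertibly; the universal property of $A_S$ then yields the natural bimodule isomorphism $A_S \cong A'_{S'} \otimes_{A'} A$.

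For (ii) I would pass to completions. By Lemma \ref{cofinal} the $\Jac(A_S)$-adic topology agrees with the $\m(N)A_S$-adic one, and the analogous statement holds over $G'$; moreover $\Jac(A_S) = \Jac(R)A_S$ by Lemma \ref{ardakov}(ii), and likewise $\Jac(A'_{S'}) = \Jac(R')A'_{S'}$. Using (i), the $\Jac(A_S)$-adic filtration on $A_S = A'_{S'} \otimes_{A'} A$ corresponds, up to equivalence of filtrations, to the filtration obtained by base change of the $\Jac(A'_{S'})$-adic filtration on $A'_{S'}$ along the finite free extension $A' \to A$ (here one uses that $A$ is finite free over $A'$, so tensoring commutes with the relevant projective limit and $\Jac(R)A \cap A' $ is comparable to $\Jac(R')A'$ up to a bounded power, which follows again from Lemma \ref{cofinal}-type cofinality applied to $\Jac(R)$ versus $\Jac(R')$). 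Completing both sides and using that $A$ is finitely generated over $A'$ (so $\widehat{A'_{S'}} \otimes_{A'} A$ is already complete and equals the completion of $A'_{S'} \otimes_{A'} A$) gives $\widehat{A_S} \cong \widehat{A'_{S'}} \otimes_{A'} A$ as bimodules.

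Finally (iii) follows from (ii) and Proposition \ref{BasCompletion}: that proposition, applied with the $\Jac(R)$-adic filtration (which satisfies (I) by Remark \ref{Jac}, since $\delta(\Jac(R)) \subseteq \Jac(R)$ in the Iwasawa setting), identifies $B$ with $(A_S)^\wedge = \widehat{A_S}$, and similarly $B' \cong \widehat{A'_{S'}}$ over $G'$. Substituting into (ii) yields $B \cong B' \otimes_{A'} A$. I expect the main obstacle to be the bookkeeping in step (ii): one must be careful that base change along $A' \to A$ really is compatible with the completions on the nose as \emph{bimodules}, which hinges on $A$ being finite free over $A'$ on both sides and on the equivalence (not equality) of the various adic filtrations, so the cleanest route is probably to prove the statement first modulo $\Jac(R)^k$ using Proposition \ref{BasCompletion}'s explicit isomorphisms $(A/\Jac(R)^k A)_S \cong B/\Jac(R)^k B$ and then pass to the limit.
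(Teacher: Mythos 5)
Your plans for (i) and (ii) are broadly workable, though they take a slightly different route from the paper's: for (i) the paper invokes the crossed product $A = A' * G/G'$ and Passman's Lemma 37.7 to get $A_{S'} \cong A'_{S'}\otimes_{A'}A$, and then uses Lemma \ref{sujatha} (which is itself an application of the saturation formula \eqref{sat}) to conclude $A_{S'} = A_S$; you instead go through $PC_l$ and \eqref{sat} more directly, which amounts to the same machinery. For (ii) the paper chooses $N \subseteq H'$ once and for all and works with the single filtration $\m(N)^\bullet$, satisfying $\m(N)^n = \m'(N)^n\otimes_{A'}A$ on the nose, so that there is no ``equivalence of filtrations'' to juggle; your variant with $\Jac(R)$ versus $\Jac(R')$ needs an extra cofinality argument but is not wrong.

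The serious gap is in (iii). You propose to deduce $B = B'\otimes_{A'}A$ from (ii) together with $B \cong \widehat{A_S}$ and $B' \cong \widehat{A'_{S'}}$, citing Proposition \ref{BasCompletion}. But Proposition \ref{BasCompletion} lives in Section 2 under the standing hypothesis $\delta(R)\subseteq\Jac(R)$, and the paper explicitly warns (just before Lemma \ref{extra-powerful}) that this hypothesis ``does not seem to be satisfied in this generality''. It is only guaranteed when $H$ is pro-$p$ (Lemma \ref{extra-powerful}(i)). For a general open $H$ the identification $B\cong\widehat{A_S}$ is precisely the content of Theorem \ref{Bthm}(ii), whose proof \emph{uses} Proposition \ref{descent}(iii) as input. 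So the argument you sketch for (iii) is circular. Your stated assumption ``$\delta(\Jac(R))\subseteq\Jac(R)$ in the Iwasawa setting'' gives at best hypothesis $(\mathrm{I})$, not $\delta(R)\subseteq\Jac(R)$, so it does not repair this. The paper instead proves (iii) directly: working modulo $\m(N)^k$ for a pro-$p$ $N\subseteq H'$, it reduces to showing that the two localisations $(A_k)_{T}$ and $(A_k)_{T'}$ coincide, which requires a genuine computation (writing $\gamma' = h\gamma^s$ and showing $t'$ is a unit in $B_k$ via a nilpotence argument, and conversely that $t^{-1}\in (A_k)_{T'}$); only then does one pass to the limit via Remark \ref{Bprolimes}. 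This honest comparison of $T$- and $T'$-localisations is the step missing from your outline, and it cannot be replaced by an appeal to Proposition \ref{BasCompletion}.
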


\begin{proof}
(i) Obviously we have a crossed product representation
\[A = A'* G/G'.\] Since regularity of ring elements is preserved
under ring automorphisms $S'$ is clearly $G$-stable, thus by
\cite[lem.\ 37.7]{pass-cross} $S'$ is also an Ore set in $A$
consisting of regular elements and it holds that
\[A_{S'}=A'_{S'} \otimes_{A'} A\] as bimodules. But by the subsequent Lemma \ref{sujatha}
we have $A'_{S'} = A_S$. This establishes (i).

(ii) We choose our $N$ in such a way that it is contained in $H'$.
Let $\m'(N)$ be the kernel of the projection
$\La(G')\to\Omega(G'/N)$, and note that
\[\m(N)^n=A\m'(N)^n=\m'(N)^nA=\m'(N)^n\otimes_{A'} A\]
for any $n \geq 1$. Using again the subsequent Lemma \ref{sujatha}
we deduce that
 \[
 \m(N)_S^n = \m(N)^n_{S'} = \m'(N)^n_{S'} \otimes_{A'_{S'}} A_{S'}
 = \m'(N)^n_{S'} \otimes_{A'_{S'}} A_S
 \]
and
 \[A_S/\m(N)_S^n = (A'_{S'}/\m'(N)_{S'}^n) \otimes_{A'_{S'}} A_S =
 (A'_{S'}/\m'(N)_{S'}^n) \otimes_{A'} A.
 \]
 By passing to the projective limit with respect to $n$ and using
Lemma \ref{cofinal} we obtain (ii).

  (iii) With $N$ as in (ii) we let $\M(N)$ denote the
maximal ideal of $\La(N).$ We set $R_k:=R/\M(N)^k R,$
$R'_k:=R'/\M(N)^k R'$ as well as
$A_k:=R_k[[t;\bar{\sigma},\bar{\delta}]],$
$A'_k:=R'_k[[t';\bar{\sigma'},\bar{\delta'}]],$  where
$\bar{\sigma},\bar{\delta}$ and $\bar{\sigma'},\bar{\delta'}$ are
induced by $\sigma,\delta$ and $\sigma',\delta'$, respectively.
Finally we put $B_k:=(A_k)_T$ and  $B'_k:=(A'_k)_{T'}$ with
$T=\{1,t, t^2,\ldots \}$ and $T'=\{1,t', t'^2,\ldots \}.$ Note that
we have $\M(N)^kA=\m(N)^k$ and $\M(N)^kA'=\m'(N)^k.$ Using Lemma
\ref{twosided-general} it follows that
\[ A_k = A/\m(N)^k = A/\m'(N)^k A \qquad\textrm{and}\qquad A'_k =
A'/\m'(N)^k \] and hence, as above, that
\[A_k\cong A'_k\otimes_{A'}A\] as bimodules
 and thus \[(A_k)_{T'}\cong (A'_k)_{T'}\otimes_{A'} A=B_k'\otimes_{A'}A\] as
 $B'_{k}-A$-bimodules by Proposition \ref{skewLaurent} (we only know that
 $T'$ is an Ore set of $A'_k$). We
 claim that we have a natural isomorphism \be\label{Bk} (A_k)_{T'}\cong
 (A_k)_{T}=B_k.\ee To this end we first show that $t'$ is invertible in $B_k:$
 Without loss of generality we may and do assume that $\gamma'=h\gamma^s$ for
 some power $s=p^m$ of $p;$ indeed the general setting with $s\in \zp$ will
 follow once we know Theorem \ref{Bthm} (ii) because $\widehat{A_S}$ is totally
 independent of the choice of $t.$ Thus  we have $t'+1=\bar{h} (t+1)^s$ which is
 equivalent to \begin{eqnarray}
 \bar{h}^{-1}t'&=&(t+1)^s-\bar{h}^{-1}\\
\label{tstrich}&=&t^s +\bar{p}bt +1-\bar{h}^{-1}\\
&=&t^s(1-at^{-s})\in B_k
\end{eqnarray}
where $b$ is some appropriate element in $A_k$ and where we set
$a:=-(\bar{p}bt +1-\bar{h}^{-1}).$ By the same argument as in Remark
\ref{t-independent} one sees that $1-\bar{h}^{-1}$ is nilpotent in
$R_k$. Hence $a$ is nilpotent in $B_k$ so that the geometric series
$\sum_{j=0}^\infty (at^{-s})^j\in B_k$ is in fact finite and
provides an inverse of $1-at^{-s},$ which implies that $t'$ is
invertible in $B_k.$ As $A_k$ is a free $A'_k$-module and $t'$ is
regular in $A'_k$ it is also regular in $A_k.$ Hence we obtain a
natural inclusion
\[(A_k)_{T'}\subseteq B_k.\] Next we show that $t^{-1}$ lies already
in $(A_k)_{T'}$ so that $(A_k)_{T'}=B_k$ follows. From
\eqref{tstrich} we obtain the equality
\[z:=(t^{s-1}+\bar{p}b)t=\bar{h}^{-1}t' + (\bar{h}^{-1}-1).\] As in
Remark \ref{t-independent} one sees that $z$ is invertible in
$B'_k\subseteq (A_k)_{T'}.$ Hence $t^{-1}=z^{-1}(t^{s-1}+\bar{p}b)$
belongs to $(A_k)_{T'}.$ Putting everything together we have
established compatible isomorphisms  \[B_k\cong B'_k\otimes_{A'}A\]
of $B'_k$-modules; one easily checks that this isomorphism also
respects the right $A$-module structures. Taking the inverse limit
with respect to the canonical projections and using  Remark
\ref{Bprolimes} we obtain (iii).
\end{proof}

The following Lemma which we have used in the above proof is
essentially due to R.\ Sujatha.

\begin{lem}\label{sujatha}
$A_S=A_{S'}.$
\end{lem}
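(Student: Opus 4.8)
The goal is to show $A_S = A_{S'}$ as subrings of the common localisation (equivalently, that the two Ore localisations coincide inside a suitable ambient ring). Recall $S = \C_A(\Jac(R)_A)$ with $R = \Lambda(H)$, while $S' = \C_{A'}(\Jac(R')_{A'})$ with $A' = \Lambda(G')$ and $R' = \Lambda(H')$; by Proposition \ref{descent}(i) and \cite[lem.\ 37.7]{pass-cross} the set $S'$, being $G$-stable (regularity is preserved by automorphisms of $A = A' * G/G'$), is already an Ore set of $A$ consisting of regular elements, and $A_{S'} = A'_{S'} \otimes_{A'} A$. So the content is that inverting $S'$ in $A$ produces the same ring as inverting $S$.

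First I would reduce to comparing the two Ore sets directly. Since both $S$ and $S'$ consist of regular elements of $A$, it suffices to prove the two inclusions $S' \subseteq A_S^\times$ and $S \subseteq A_{S'}^\times$; by the universal property of Ore localisation these give mutually inverse maps $A_{S'} \to A_S$ and $A_S \to A_{S'}$ compatible with the maps from $A$, hence an isomorphism fixing $A$. For the inclusion $S' \subseteq A_S^\times$: an element $s' \in S'$ is regular in $A$, and after localising at $S$ we land in the semisimple Artinian ring $\bar A_S := A_S/\Jac(A_S)$ (Goldie, using $\Jac(A_S) = \Jac(R)A_S$ from the preceding proposition); there $\bar s'$ is regular, hence a unit, and since $\Jac(A_S)$ is topologically nilpotent in the relevant completion — or more simply since $A_S$ is semilocal and $s'$ maps to a unit mod $\Jac(A_S)$ — $s'$ is a unit in $A_S$. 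The reverse inclusion $S \subseteq A_{S'}^\times$ is symmetric once one knows $A_{S'}/\Jac(A_{S'})$ is semisimple with $\Jac(A_{S'}) = \Jac(R')A_{S'}$; here one must also check that an element of $S = \C_A(\Jac(R)_A)$ remains regular after passing to $A'_{S'}$, which follows because $A$ is free over $A'$ and $A_{S'}$ is flat over $A'_{S'}$.

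The cleaner route, which I would actually write up, is via the identification $S = S_l(R \subseteq A) = S_r(R \subseteq A)$ and $S' = S_l(R' \subseteq A') = S_r(R' \subseteq A')$ from Theorem \ref{main-Ore}, together with the transitivity Lemma \ref{PC-functorial}. Apply Lemma \ref{PC-functorial}(i) to the chain $R' \hookrightarrow A' \hookrightarrow A$: since $A$ is finitely generated (indeed free) as an $A'$-bimodule and $PC_l(R' \subseteq A')$ holds (Theorem \ref{main-Ore} for $G'$), we get $PC_l(R' \subseteq A)$ and the explicit saturation formula $S_l(R' \subseteq A) = \{s \in A : as \in S_l(R' \subseteq A') \text{ for some } a\}$. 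Separately, applying Lemma \ref{PC-functorial}(ii) to $R' \hookrightarrow A' \xrightarrow{} A$ — valid because $A'$ is finitely generated over $R'$ — one sees $PC_l(R' \subseteq A)$ is equivalent to $PC_l(A' \subseteq A)$; but $A$ is finite over $A'$, so a cofinite ideal is cofinite over $A'$ already, making $A' \subseteq A$ satisfy $PC_l$ trivially, and one checks that $S_l(A' \subseteq A) = \C_A(0) = S$ essentially because $A/Af$ is finite over $A'$ (hence over $R$) iff $f$ is regular, using the Goldie/Artinian structure. Comparing the two descriptions of $S_l(R' \subseteq A)$ yields $S_l(R' \subseteq A) = S$, and on the other hand the saturation formula together with $A_{S_l(R' \subseteq A')} = A'_{S'} \otimes_{A'} A = A_{S'}$ (Proposition \ref{descent}(i)) shows $A_{S_l(R' \subseteq A)} = A_{S'}$. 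Combining, $A_S = A_{S'}$; the right-handed versions run identically.

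The main obstacle is the bookkeeping around cofiniteness under change of coefficient ring: one must be careful that "$R$-cofinite" and "$R'$-cofinite" agree on ideals of $A$ (this uses that $R$ is module-finite over $R'$, i.e.\ $\Lambda(H)$ is finite over $\Lambda(H')$ since $H'$ is open in $H$) and that the saturation in Lemma \ref{PC-functorial}(i) does not enlarge the Ore set beyond $S$ — which is exactly the statement that $S$ is already saturated, a consequence of $S = \C_A(\Jac(R)_A)$ being the preimage of the regular elements of the semisimple ring $\bar A$. Once these compatibilities are pinned down the rest is formal.
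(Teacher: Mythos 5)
Your "cleaner route" (Plan 2) does begin exactly as the paper does, by applying Lemma \ref{PC-functorial}(i) to $R' \hookrightarrow A' \hookrightarrow A$ to get the saturation formula for $S_l(R' \subseteq A)$. The paper's proof is in fact just this: it observes (implicitly) that $S_l(R' \subseteq A) = S_l(R \subseteq A) = S$, since $R$ is module-finite over $R'$ so that ``$R$-cofinite'' and ``$R'$-cofinite'' coincide for ideals of $A$; then the saturation formula shows each $s \in S$ admits $a \in A$ with $as \in S'$, hence is invertible in $A_{S'}$ (use the right-handed version for the other inverse), while the ``in particular'' part of Lemma \ref{PC-functorial}(i) gives $S' \subseteq S$. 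That is the whole proof.

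Where your Plan 2 goes wrong is the next step. You invoke Lemma \ref{PC-functorial}(ii) ``to $R' \hookrightarrow A' \to A$ --- valid because $A'$ is finitely generated over $R'$.'' This is false: $A' = \Lambda(G') \cong R'[[t';\sigma',\delta']]$ is a full power series ring over $R' = \Lambda(H')$, so it is very far from being a Noetherian $R'$-module, and Lemma \ref{PC-functorial}(ii) does not apply. Your subsequent claim that $S_l(A' \subseteq A) = \C_A(0) = S$ is also incorrect outside the Artinian case: $A = \Lambda(G)$ is not Artinian, and $\C_A(0)$ is in general a proper subset of $S = \C_A(\Jac(R)_A)$ (e.g.\ for $G = \z_p^2$, where $A$ is a domain, $\C_A(0) = A\setminus\{0\}$ while $S$ is the much smaller set of power series with nonzero reduction). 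None of this machinery is needed; the argument should stop after the saturation formula.

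Your Plan 1 (show $S'\subseteq A_S^\times$ and $S\subseteq A_{S'}^\times$ via Goldie and the semilocal structure) is a genuinely different and in principle viable route, but the step you gloss over --- that $\bar{s'}$ is regular in $\bar A_S$, equivalently that $S'\subseteq \C_A(\Jac(R)_A) = S$ --- is exactly the non-trivial content, and it is not automatic that ``regular modulo $\Jac(R')_{A'}$ in $A'$'' implies ``regular modulo $\Jac(R)_A$ in $A$''. You would need to verify $\Jac(R)_A \cap A' = \Jac(R')_{A'}$ and track regularity through the crossed product $A = A' * G/G'$. The paper avoids all of this by staying entirely inside the cofiniteness formalism of \S 2.1, where the comparison between $R$ and $R'$ is painless because $R$ is finite over $R'$.
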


\begin{proof}
The claim follows from Lemma \ref{PC-functorial} (i): \eqref{sat}
applied to $R' \hookrightarrow A' \hookrightarrow A$ says that all
elements of $S $ are already invertible in $A_{S'};$ thus the latter
is also the localisation of $A$ with respect to $S.$
\end{proof}

\begin{thm}\label{Bthm}
We have:
\begin{itemize}
  \item[(i)] $B$ is pseudocompact and Noetherian.
  \item[(ii)] $B \cong \widehat{A_S}$.
  \item[(iii)] $B$ is flat over $A_S$ and $A$.
\end{itemize}
\end{thm}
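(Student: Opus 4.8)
The strategy is to descend to an open normal subgroup of $G$ for which the hypotheses of both parts~(i) and~(iv) of Lemma~\ref{extra-powerful} are satisfied, and then to transport the conclusions back up along Proposition~\ref{descent}. Concretely, the first step is to choose an open normal subgroup $G' \trianglelefteq G$ which is powerful (in fact uniform) and such that $H' := H \cap G'$ is extra-powerful. Such a $G'$ exists: start from a uniform open normal subgroup $U \trianglelefteq G$ with Lie lattice $L = \log U$. Then $M := \mathrm{Lie}(H) \cap L$ is saturated in $L$, hence a $\mathbb{Z}_p$-direct summand, so the uniformity condition $[L,L] \subseteq p^{\varepsilon}L$ restricts to $[M,M] \subseteq p^{\varepsilon}M$ and $H \cap U = \exp(M)$ is again uniform. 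Passing to a term $U^{p^m}$ of its lower $p$-series one then gets, by a short bracket computation, that $U^{p^m}$ is powerful and that both $U^{p^m}$ and $H \cap U^{p^m} = (H \cap U)^{p^m}$ are extra-powerful, while $U^{p^m}$ remains normal in $G$.

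Fix such a $G'$ and adopt, for it, the notation of Proposition~\ref{descent}: $H' = H \cap G'$, $R' = \Lambda(H')$, $A' = \Lambda(G')$, $B' = R'\lk t';\sigma',\delta']]$, $S' = \C_{A'}(\Jac(R')_{A'})$; in particular $H'$ is pro-$p$. By Lemma~\ref{extra-powerful}(i), $B'$ is pseudocompact and isomorphic to the $\Jac(R')A'_{S'}$-adic completion of $A'_{S'}$, which by Lemma~\ref{ardakov}(ii) (applied to the pair $(G',H')$) is $\widehat{A'_{S'}}$. By Lemma~\ref{extra-powerful}(iv), $B'$ is Noetherian and flat as a left and as a right module over both $A'_{S'}$ and $A'$.

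I would now read the theorem off Proposition~\ref{descent}. For~(ii): by part~(iii) of that proposition, the identification just recalled, and part~(ii), $B \cong B'\otimes_{A'} A \cong \widehat{A'_{S'}}\otimes_{A'} A = \widehat{A_S}$. For~(iii): parts~(i) and~(iii) together with associativity of the tensor product give $B = B'\otimes_{A'_{S'}} A_S$, so $B$ is flat over $A_S$ (on either side) by extension of scalars from the flatness of $B'$ over $A'_{S'}$; flatness over $A$ then follows either from $B = B'\otimes_{A'} A$ with $B'$ flat over $A'$, or from the flatness of the Ore localisation $A_S$ over $A$. For~(i): the crossed product presentation $A = A' * (G/G')$ exhibits $A$ as a free $A'$-module of finite rank and as a finite normalising extension of $A'$; since $B'$ contains $A'$ compatibly, $B = B'\otimes_{A'} A$ inherits the structure of a crossed product $B' * (G/G')$, hence of a finite normalising extension of $B'$ which is free of finite rank as a $B'$-module (the coset representatives normalise $B'$ because conjugation by a group element preserves $H'$, whence $A'_{S'}$ and its completion $B'$). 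Therefore $B$ is Noetherian, $B'$ being Noetherian and a finite normalising extension of a Noetherian ring being Noetherian (cf.\ \cite[thm.\ 10.2.4]{mc-rob}, \cite{pass-cross}), and $B$ is pseudocompact, being a finite free module over the pseudocompact ring $B'$.

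The one genuinely delicate step is the choice of $G'$: one must know that $(G,H)$ can be shrunk to an open normal $G'$ for which \emph{both} hypothesis packages of Lemma~\ref{extra-powerful} hold simultaneously — that is, $H'$ pro-$p$ together with $G'$ powerful and $H'$ extra-powerful — and this is exactly what the Lie-lattice argument above provides (the point being that a saturated Lie sublattice is a direct summand, so that uniformity is inherited by $H \cap U$). Everything afterwards is formal: the only care needed is to check that the bimodule identifications of Proposition~\ref{descent} are compatible with the evident ring structures, so that the crossed-product reasoning in part~(i) is legitimate.
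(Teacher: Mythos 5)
Your overall strategy is the same as the paper's: shrink $G$ to an open normal $G'$ for which Lemma~\ref{extra-powerful} gives everything over $(G',H')$, and then transport along Proposition~\ref{descent}. The variations are in the details. Your choice of $G'$ via uniform subgroups and saturated Lie sublattices is correct but more laborious than needed; the paper simply cites \cite[prop.\ 8.5.2]{wilson} for an open normal \emph{extra}-powerful pro-$p$ subgroup $G'$ and then observes that $H'$ is automatically extra-powerful because $G'/H'\cong\mathbb{Z}_p$ is torsion-free, so $[H',H']\subseteq G'^{p^2}\cap H'=H'^{p^2}$. Your crossed-product argument for~(i) is a valid alternative to the paper's filtration comparison, though one should note that the paper does take some care to show that the topology on $B$ defined by $\Jac(R)^nB$ coincides with the natural finite-free-module topology over $B'$ (by sliding from the $\Jac(R)^n$-filtration to the $\Jac(R')^n$-filtration via finite freeness of $R$ over $R'$); you simply assert that $B$ is a finite free module over the pseudocompact ring $B'$, which is the right conclusion but hides exactly that compatibility check.

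The genuine gap is in~(ii). You derive $B\cong\widehat{A_S}$ by composing bimodule identifications $B\cong B'\otimes_{A'}A\cong\widehat{A'_{S'}}\otimes_{A'}A=\widehat{A_S}$ and end by saying that compatibility with the ring structures is ``the only care needed''. But that compatibility is precisely the substance of the argument, and it is not routine: Proposition~\ref{descent} provides \emph{bimodule} isomorphisms, not ring isomorphisms, and there is no a priori ring map from $\widehat{A_S}$ to $B$ to compare them against. The paper resolves this by actually producing the ring map: starting from the ring homomorphism $B'\to B$ of Proposition~\ref{descent}(i) and Lemma~\ref{sujatha}, one gets a commutative diagram of rings with a dashed arrow $A_S\to B$ by the universal property of Ore localisation; this arrow is continuous for the $\Jac(A_S)$-adic topology (because $\Jac(A_S)=\Jac(R)A_S$, Lemma~\ref{ardakov}(ii)) and $B$ is complete, so it extends to a ring homomorphism $\widehat{A_S}\to B$; finally, one identifies this ring map, \emph{as a map of bimodules}, with the base extension of $\widehat{A'_{S'}}\xrightarrow{\cong}B'$, whence it is bijective and therefore a ring isomorphism. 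Without constructing the map this way, ``compose the bimodule isos'' does not by itself yield~(ii). You should fill in this construction rather than leaving it as a formal compatibility remark.
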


\begin{proof}
We choose an open normal subgroup $G' \subseteq G$ which is an
extra-powerful pro-$p$-group (\cite[prop.\ 8.5.2]{wilson}). As
$G'/H'$ is torsionfree $H'$ then is extra-powerful as well. The
analogous assertions of our Theorem for $B'$ and $\widehat{A'_{S'}}$
were already established in Lemma \ref{extra-powerful}. Using
Proposition \ref{descent} the fact that $B$ is Noetherian and is
flat over $A_S$ then follows immediately by base extension.

The topology on $B$, by Lemma \ref{twosided-general} (i), is given
by the filtration $\mathrm{Jac}(R)^nB$. Since the pseudocompact ring
$R$ is finite free over the pseudocompact ring $R'$ the same
topology also is given by the filtration $\mathrm{Jac}(R')^nB$. This
shows that the topology on $B$ coincides with the natural topology
of $B$ as a finite free module (by Proposition \ref{descent} (iii))
over the pseudocompact ring $B'$. Hence $B$ is pseudocompact.

By Proposition \ref{descent} (i) we have a ring homomorphism $B'
\longrightarrow B$ and using Lemma \ref{sujatha} then the
commutative diagram of solid arrows
\begin{equation*}
    \xymatrix{
  A' \ar[d] \ar[r] & A'_{S'} \ar[d] \ar[r] &
  B' \ar[d] \\
  A \ar[r] & A_S = A_{S'} \ar@{-->}[r] & B.   }
\end{equation*}
The dashed arrow then exists by the universal property of
localisation. It is continuous for the $\mathrm{Jac}(A_S)$-adic
topology on $A_S$ since $\mathrm{Jac}(A_S) = \mathrm{Jac}(R)A_S$ by
Lemma \ref{ardakov} (ii). But $B$ is complete. Hence this arrow
extends to a ring homomorphism
\begin{equation*}
    \widehat{A_S} \longrightarrow B.
\end{equation*}
As a homomorphism of bimodules this is, by Proposition
\ref{descent}, of course the base extension to $A$ of the
corresponding homomorphism $ \widehat{A'_{S'}} \longrightarrow B'$.
The latter is an isomorphism by Lemma \ref{extra-powerful} (i).
Hence the former is an isomorphism as well.
\end{proof}

\section{ Overconvergent skew Laurent series }

In this section $R$ denotes a  Noetherian pseudocompact ring. We fix
a ring norm $|\ |$ on $R$ which defines the pseudocompact topology.
To avoid confusion we recall that the function $|\ | : R
\longrightarrow \mathbb{R}_{\geq 0}$ satisfies the axioms
\begin{enumerate}
\item  $|a-b| \leq \max(|a|,|b|)$,
\item  $|a| = 0 \Longleftrightarrow a = 0$,
\item  $|ab| \leq |a||b|$,
\item  $|1| = 1$
\end{enumerate}
for any $a,b \in R$. We also suppose that
\begin{enumerate}
\item[(v)] $|a| \leq 1 \qquad\textrm{for any $a \in R$}$.
\end{enumerate}
As before $\sigma$ is a topological ring automorphism of $R$ and
$\delta$ is a continuous left $\sigma$-derivation. We assume that
\begin{enumerate}
\item[(vi)] $|\sigma(a)| = |a| \qquad\textrm{for any $a \in R$}$.
\end{enumerate}
A standard example for a ring norm on $R$ satisfying (i) -- (vi) is
given by
\[
|a|_\Jac := 2^{-k} \qquad\textrm{if $a \in \Jac(R)^k \setminus
\Jac(R)^{k+1}$}.
\]
We also assume that there is a constant $0 < D <
1$ such that
\begin{enumerate}
\item[(vii)] $|\delta(a)| \leq D |a| \qquad\textrm{for any $a
\in R$}$.
\end{enumerate}
In particular, $\delta$ is $\sigma$-nilpotent. It also follows that
\[
|M_{k,l}(\delta,\sigma)(a)| \leq D^k|a| \qquad\textrm{for any $a \in
R$}.
\]
If $\delta(R) \subseteq \Jac(R)$ and $\delta(\Jac(R)) \subseteq
\Jac(R)^2$ then $|\ |_\Jac$ satisfies this condition (vii) with $D
:= 2^{-1}$.

For any real constant $D < u < 1$ we now introduce the (left)
$R$-submodule
\[ B(|\ |;u) := \{\sum_{i \in \mathbb{Z}} a_it^i \in B | \lim_{i
\rightarrow -\infty} |a_i|u^i = 0\}
\]
of $B$. It carries the norm $|\sum_{i \in \mathbb{Z}} a_it^i|_u :=
\sup_{i \in \mathbb{Z}} |a_i|u^i$. By the proof of Proposition
\ref{skewLaurentinfinite} the formula \ref{mult} gives a
well-defined ``multiplication'' map $B \times B \longrightarrow B$.

\begin{prop}
This multiplication map restricts to a map
\[
B(|\ |;u) \times B(|\ |;u) \longrightarrow B(|\ |;u).
\]
\end{prop}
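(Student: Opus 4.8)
The plan is to take two elements $a=\sum_j a_jt^j$ and $b=\sum_l b_lt^l$ of $B(|\ |;u)$ and to show that their product $c=ab=\sum_m c_mt^m$, computed by the formula \eqref{mult}, again satisfies $\lim_{m\to-\infty}|c_m|u^m=0$. Since $c_m=c_m^++c_m^-$ it suffices to bound $|c_m^+|u^m$ and $|c_m^-|u^m$ separately. First I would record the elementary consequences of the norm axioms that will be used throughout: from (vii) we have $|M_{k,l}(\delta,\sigma)(r)|\leq D^k|r|$ for all $r\in R$, and from (vi) the analogous inequality for $M_{k,l}(\delta',\sigma')$ holds as well (note $\delta'=-\delta\circ\sigma^{-1}$, so $|\delta'(r)|=|\delta(\sigma^{-1}(r))|\leq D|\sigma^{-1}(r)|=D|r|$, and $\sigma'$ preserves the norm). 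Together with the ultrametric inequality (i) and submultiplicativity (iii), this lets me estimate each summand of \eqref{cm+} and \eqref{cm-}.

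For the positive part, a summand of $c_m^+$ is $a_jM_{j-n,n}(\delta,\sigma)(b_{m-n})$ with $j\geq n\geq 0$; its norm is at most $|a_j|\,D^{j-n}\,|b_{m-n}|\leq D^{j-n}|b_{m-n}|$ using (v). Multiplying by $u^m=u^n\cdot u^{m-n}$ and using $D<u$ so that $D^{j-n}\leq u^{j-n}\leq u^{-n}$ (since $j-n\geq 0$ and $u<1$), we get $|a_jM_{j-n,n}(\delta,\sigma)(b_{m-n})|\,u^m\leq |b_{m-n}|u^{m-n}\cdot u^{j} \cdot u^{-n}\cdot u^{n}$ — I will need to organize the bookkeeping so that the final bound is of the form $|b_{m-n}|u^{m-n}$ (up to a factor bounded by $1$): concretely $D^{j-n}u^{n}\leq u^{j-n}u^{n}=u^{j}\leq 1$ since $j\geq 0$, so each summand has $u^m$-weighted norm at most $|b_{m-n}|u^{m-n}$. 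Taking the sup over the (finitely many, for fixed $m$) relevant indices and then letting $m\to-\infty$ forces $m-n\to-\infty$ uniformly, so $|b_{m-n}|u^{m-n}\to 0$; by the ultrametric property the sup of the summands goes to $0$, hence $|c_m^+|u^m\to0$.

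For the negative part, a summand of $c_m^-$ is $a_j\sigma'M_{j-n,1-j}(\delta',\sigma')(b_{m-n})$ with $n\leq j<0$; its norm is at most $|a_j|\,D^{j-n}\,|b_{m-n}|$. Here the key point is that $j<0$, so one should weight with $a_j$: write $u^m=u^{j}\cdot u^{m-j}$... actually the cleaner route is to use both factors. Since $j-n\geq 0$ we again have $D^{j-n}\leq u^{j-n}$; and since $j<0$, $u^{j}\geq 1$, so one must instead exploit the decay of $|a_j|u^j$ as $j\to-\infty$ \emph{and} the decay of $|b_{m-n}|$ as $m-n\to-\infty$. I would fix $k$, use the hypothesis on $a$ to choose $N_0$ with $|a_j|u^j\leq 2^{-k}$ for $j\leq -N_0$ (rescaling appropriately), use the hypothesis on $b$ to control $|b_{m-n}|$ once $m-n$ is very negative, and use $D^{j-n}\leq u^{j-n}$ to absorb the range where $j-n$ is large; the finitely many remaining summands all have $m-n$ very negative when $m$ is, so their weighted norms are small too. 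The main obstacle is precisely this interplay in the negative part: one has two independent "infinities" ($j\to-\infty$ and $n\to-\infty$ with $m-n\to-\infty$) and must combine the exponential decay coming from $D<u$ with the two limit hypotheses so that the bound is uniform in the summation index; once that bookkeeping is set up as in the proof of Proposition \ref{skewLaurentinfinite}, the conclusion $|c_m^-|u^m\to0$ follows, and with it the proposition.
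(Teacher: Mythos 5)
Your plan follows the paper's proof step for step: decompose $c_m = c_m^+ + c_m^-$ via \eqref{cm+}--\eqref{cm-}, bound each summand using (vii) (and its analogue for $M_{k,l}(\delta',\sigma')$, as you correctly derive from (vi)), weight by $u^m$, and exploit the ultrametric to pass to a supremum. The positive part is carried out correctly and reproduces the paper's bound $|c_m^+|u^m \leq \max_{i\leq m}|b_i|u^i$. For the negative part you only sketch, but the three decay sources you name---$j-n$ large, $j\to-\infty$, and $m-n\to-\infty$---are exactly the three cases the paper covers with its constants $N_1, N_0, N_2$, and the observation that their simultaneous failure forces $m > -N_0-N_1-N_2$.

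One genuine slip in the sketch: for the range where $j-n$ is large you say you would ``use $D^{j-n}\leq u^{j-n}$ to absorb'' it. After weighting by $u^m = u^j\,u^{m-n}\,u^{n-j}$ the summand of $c_m^-$ is bounded by $|a_j|u^j\cdot|b_{m-n}|u^{m-n}\cdot(D/u)^{j-n}$, and the inequality $D^{j-n}\leq u^{j-n}$ only gives $(D/u)^{j-n}\leq 1$, which absorbs nothing. What actually controls this range is the \emph{strict} inequality $D/u<1$, so that $(D/u)^{j-n}\to 0$ as $j-n\to+\infty$; combined with $|a_j|u^j\le |x|_u$ and $|b_{m-n}|u^{m-n}\le |y|_u$ this makes those summands uniformly small once $j-n\geq N_1$. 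The bound $\leq 1$ coming from $D<u$ is what you use in the \emph{other} two cases (where the decay comes from $|a_j|u^j$ or $|b_{m-n}|u^{m-n}$), not in the $j-n\gg 0$ case. With that distinction made explicit, your outline turns into the paper's argument.
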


\begin{proof}
Let $x = \sum_{i \in \mathbb{Z}} a_it^i$ and $y = \sum_{i \in
\mathbb{Z}} b_it^i$ be two arbitrary elements in $B(|\ |;u)$ and put
$xy = \sum_{m \in \mathbb{Z}} c_mt^m$ in $B$ with $c_m = c_m^+ +
c_m^-$ as in \eqref{mult} - \eqref{cm-}. We have
\[
|c_m^+|u^m \leq \max_{j \geq n \geq 0} |a_j| \cdot |b_{m-n}|u^{m-n}
\cdot u^n \leq \max_{n \geq 0} |b_{m-n}|u^{m-n} = \max_{i \leq m}
|b_i|u^i
\]
and
\[
|c_m^-|u^m \leq \max_{n \leq j <0} |a_j|u^j \cdot |b_{m-n}|u^{m-n}
\cdot \big(\frac{D}{u}\big)^{j-n}.
\]
The first inequality obviously implies that $\lim_{m \rightarrow
-\infty} |c_m^+|u^m = 0$. On the other hand, given any constant
$\epsilon > 0$, we find natural numbers $N_0,N_1,N_2 > 0$ such that
for $n \leq j$ we have $|a_j|u^j \cdot |b_{m-n}|u^{m-n} \cdot
\big(\frac{D}{u}\big)^{j-n} \leq$
\[
\left\{
  \begin{array}{lll}
    |x|_u|y|_u \cdot \big(\frac{D}{u}\big)^{j-n} \leq \epsilon & \hbox{for
    $j-n \geq N_1$} & \hbox{(since $\frac{D}{u} < 1$);} \\
    |a_j|u^j \cdot |y|_u \leq
\epsilon & \hbox{for $j \leq -N_0$} & \hbox{(since $\lim_{j
\rightarrow -\infty} |a_j|u^j = 0$);} \\
   |x|_u \cdot |b_{m-n}|u^{m-n}
\leq \epsilon & \hbox{for $m-n \leq -N_2$} & \hbox{(since $\lim_{k
\rightarrow -\infty} |b_k|u^k = 0$).}
  \end{array}
\right.
\]
But $j-n < N_1, j > -N_0, m-n > -N_2$ together imply $m >
-N_0 - N_1 - N_2$. It follows that $|c_m^-|u^m \leq \epsilon$ for $m
\leq -N_0 - N_1 - N_2$.
\end{proof}

\begin{cor}
If the assumption $\mathrm{(I)}$ is satisfied then $B(|\ |;u)$, for
any $D < u < 1$, as well as $B^\dag(|\ |) := \bigcup_u B(|\ |;u)$
are subrings of $B$.
\end{cor}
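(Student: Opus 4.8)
The plan is to derive both claims from the Proposition immediately preceding, which already establishes that multiplication maps $B(|\ |;u) \times B(|\ |;u)$ into $B(|\ |;u)$. First I would record that once $\mathrm{(I)}$ holds the formula \eqref{mult} makes $B$ into an actual ring by Proposition \ref{skewLaurentinfinite}, so that it is meaningful to speak of subrings of $B$. Fix $D < u < 1$. By construction $B(|\ |;u)$ is a left $R$-submodule of $B$, hence an additive subgroup stable under negation, and it contains $1 = t^0$, since a series with vanishing coefficients in all negative degrees automatically satisfies the condition $\lim_{i \to -\infty} |a_i|u^i = 0$. Together with the multiplicative closure furnished by the Proposition, this shows that $B(|\ |;u)$ is a subring of $B$.

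For $B^\dag(|\ |) = \bigcup_{D < u < 1} B(|\ |;u)$ the key observation is that this family is totally ordered by inclusion and increasing in $u$: if $D < u \leq u' < 1$ and $\sum_i a_i t^i \in B(|\ |;u)$, then for negative $i$ one has $u'^{\,i} \leq u^{\,i}$ (a larger base raised to a negative power gives a smaller value), so $|a_i| u'^{\,i} \leq |a_i| u^{\,i} \to 0$ as $i \to -\infty$, whence $B(|\ |;u) \subseteq B(|\ |;u')$. Consequently $B^\dag(|\ |)$ is a directed union of subrings of $B$ and is therefore itself a subring: given $x \in B(|\ |;u_1)$ and $y \in B(|\ |;u_2)$ we set $u := \max(u_1,u_2)$, which still satisfies $D < u < 1$, and then $x, y \in B(|\ |;u)$ by monotonicity, so $x - y$, $xy$ and $1$ all lie in $B(|\ |;u) \subseteq B^\dag(|\ |)$.

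I do not expect any genuine difficulty here; the one point requiring a little care is the monotonicity $B(|\ |;u) \subseteq B(|\ |;u')$ for $u \le u'$, which hinges on the exponents being negative so that $c \mapsto c^{\,i}$ is decreasing on $(0,1)$, and on noting that the parameter $u = \max(u_1,u_2)$ used in the union argument still lies strictly between $D$ and $1$, so that the preceding Proposition is applicable to $B(|\ |;u)$. All of the genuine analytic content --- the estimates for $c_m^+$ and $c_m^-$, in particular the role of $D/u < 1$ --- has already been dispatched in the proof of that Proposition.
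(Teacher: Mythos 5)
Your proof is correct and is precisely the argument the paper leaves implicit (no proof is printed for this corollary): multiplicative closure comes from the preceding Proposition, additive closure and containment of $1$ are immediate from the definition of $B(|\ |;u)$, and the union is a subring because the family $B(|\ |;u)$ is nested increasing in $u$ thanks to $u \mapsto u^i$ being decreasing on $(0,1)$ for each fixed $i<0$. Nothing further is needed.
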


\begin{rem} (i) In general the ring $B^\dag(|\ |)$  depends on the choice of the norm, but if one restricts to
 `ideal norms' the ring $B^\dag(|\ |)$ is independent of the particular choice of $|\ |.$ More
 precisely let $J_1$ and $J_2$ be two ideals in $R$ such that the $J_i$-adic filtrations define the given
topology on $R$ for $i=1,2.$ Consider the norms \[|a|_i := \rho_i^m \;\;\; \mbox{  if   } \;\;\;a
\in J_i^m \setminus J_i^{m+1},\] where $0 < \rho_i < 1$ are two real constants. Assuming that $|\
|_1$ and $|\ |_2$  also satisfy the conditions (vi) and (vii) above  we have
\[ B^\dag(|\ |_1)=B^\dag(|\ |_2).\]\\
(ii) The pseudocompact topology of $R$ can always be defined by an `ideal norm'.
 \end{rem}

\begin{proof}
In order to prove (i) we write $\rho_2 = \rho_1^\sigma$ for some $\sigma > 0$ and note that there
exist natural numbers $n_i$, $i=1,2,$ such that \be\label{incl} J_1^{n_1} \subseteq J_2  \;\;
\mbox{ and } \;\; J_2^{n_2} \subseteq J_1. \ee holds. We shall show the following inequality
\be\label{inequal} \rho_2^{n_2} |\ |_1^{n_2 \sigma} \leq |\ |_2 \leq  \rho_2^{-1}|\
|_1^{\frac{\sigma}{n_1}},\ee which easily implies that $B^\dag(|\ |_1)=B^\dag(|\ |_2).$

Let $a\in R$ be arbitrary and assume that $|a|_1=\rho_1^n$ and $ |a|_2=\rho_2^m$ for some natural
numbers $n$ and $m.$ Then from \eqref{incl} we obtain $n< n_1(m+1),$ which is equivalent to  $
m>\frac{n}{n_1} -1,$ hence
\[|a|_2=\rho_2^m<
\rho_2^{ \frac{n}{n_1}}\rho_2^{-1}=\rho_1^{\frac{\sigma
n}{n_1}}\rho_2^{-1}=|a|_1^{\frac{\sigma}{n_1}}\rho_2^{-1},
\]
which proves the second inequality, the first one is proven similarly.

For (ii) let $|\ |$ be any ring norm on $R$ (satisfying (i) - (v)) which defines the topology of
$R$. Then $J := \{a \in R : |a| < 1\}$ is an open ideal in $R$. Since $R$ is Noetherian $J$ is
generated by finitely many elements $a_1,\ldots,a_s$. We put
\[
\rho := \max \{ |a| : a \in J\} = \max (|a_1|,\ldots,|a_s|)
\]
and introduce the ideal norm
\[
|a|' := \rho^m \qquad\textrm{if}\ a \in J^m \setminus J^{m+1}.
\]
For a given $0 < \epsilon < 1$ choose an $m \in \mathbb{N}$ such that $\rho^m \leq \epsilon$. Then
\[ J^m \subseteq \{ a \in R : |a| \leq \rho^m \} \subseteq \{ a \in R : |a| \leq \epsilon \}.
\]
It follows that the $J$-adic topology is finer than the given topology. In fact we have
\[
|\ | \leq |\ |'.
\]
On the other hand each $J^n/J^{n+1}$ is finitely generated over the Artinian (and Noetherian) ring
$R/J$. It follows inductively that $R/J^m$ is an $R$-module of finite length. Using \cite[cor.\
3.13]{vG-vB} we see that $J^m$ is open in $R$. Hence the $J$-adic topology coincides with the given
topology of $R$.

\end{proof}

%\textbf{Questions:}
%\begin{itemize}
%    \item Is it possible to weaken the condition (vii) on $\delta$?
%    \item Is the ring $B^\dag(|\ |)$ under reasonable conditions on
%    $R$ independent of the choice of $|\ |$?
%    \item It is probably not so difficult to introduce an analog
%    $B^{rig}$ of the Robba ring as an $R$-module by growth
%    conditions on the coefficients of a skew Laurent series
%    (probably $R$ should be a faithful $\mathbb{Z}_p$-algebra and
%    one should allow coefficients in $R[1/p]$) and even show that
%    the multiplication is well defined. But how to show
%    associativity and so on for this multiplication? We could not do
%    it directly on $B$ either!
%\end{itemize}

% ----------------------------------------------------------------

 %\INPUT{../bib/Xbib.bib}%
%\INPUT{skew.bbl} % For Gather Purpose Only

\bibliographystyle{amsplain}
%\bibliography{../bib/Xbib}

\def\Dbar{\leavevmode\lower.6ex\hbox to 0pt{\hskip-.23ex \accent"16\hss}D}
  \def\cfac#1{\ifmmode\setbox7\hbox{$\accent"5E#1$}\else
  \setbox7\hbox{\accent"5E#1}\penalty 10000\relax\fi\raise 1\ht7
  \hbox{\lower1.15ex\hbox to 1\wd7{\hss\accent"13\hss}}\penalty 10000
  \hskip-1\wd7\penalty 10000\box7}
  \def\cftil#1{\ifmmode\setbox7\hbox{$\accent"5E#1$}\else
  \setbox7\hbox{\accent"5E#1}\penalty 10000\relax\fi\raise 1\ht7
  \hbox{\lower1.15ex\hbox to 1\wd7{\hss\accent"7E\hss}}\penalty 10000
  \hskip-1\wd7\penalty 10000\box7} \def\Dbar{\leavevmode\lower.6ex\hbox to
  0pt{\hskip-.23ex \accent"16\hss}D}
  \def\cfac#1{\ifmmode\setbox7\hbox{$\accent"5E#1$}\else
  \setbox7\hbox{\accent"5E#1}\penalty 10000\relax\fi\raise 1\ht7
  \hbox{\lower1.15ex\hbox to 1\wd7{\hss\accent"13\hss}}\penalty 10000
  \hskip-1\wd7\penalty 10000\box7}
  \def\cftil#1{\ifmmode\setbox7\hbox{$\accent"5E#1$}\else
  \setbox7\hbox{\accent"5E#1}\penalty 10000\relax\fi\raise 1\ht7
  \hbox{\lower1.15ex\hbox to 1\wd7{\hss\accent"7E\hss}}\penalty 10000
  \hskip-1\wd7\penalty 10000\box7}
\providecommand{\bysame}{\leavevmode\hbox
to3em{\hrulefill}\thinspace}
\providecommand{\MR}{\relax\ifhmode\unskip\space\fi MR }
% \MRhref is called by the amsart/book/proc definition of \MR.
\providecommand{\MRhref}[2]{%
  \href{http://www.ams.org/mathscinet-getitem?mr=#1}{#2}
} \providecommand{\href}[2]{#2}

\end{document}